\newcommand{\real}{\mathbb{R}}
\newcommand{\RR}{\mathbb{R}}
\newcommand{\R}{\mathbb{R}}
\newcommand{\symMat}{\mathbb{S}}
\mathchardef\mhyphen="2D 
\DeclareMathOperator{\rank}{rank}
\DeclareMathOperator{\nullspace}{nullspace}
\DeclareMathOperator{\diag}{diag}
\DeclareMathOperator{\range}{range}
\newtheorem{theorem}{Theorem}[section]
\newtheorem{lemma}{Lemma}[section]
\newtheorem{definition}{Definition}
\theoremstyle{plain}
\newtheorem{exm}{Example}[section]
\newtheorem{remark}{Remark}[section]
\newcommand{\Amap}{\mathcal{A}}
\newcommand{\dm}{d}
\newcommand{\inprod}[2]{\langle #1, #2 \rangle}
\newcommand{\twonorm}[1]{\left\|#1\right\|_2}
\newcommand{\fronorm}[1]{\left\|#1\right\|_{\mbox{\tiny{F}}}}
\newcommand{\fronormA}[1]{\|#1\|_{\mbox{\tiny{F}}}}
\newcommand{\opnorm}[1]{\left\|#1\right\|_{\mbox{\tiny{\textup{op}}}}}
\newcommand{\nucnorm}[1]{\left\|#1\right\|_*}
\newcommand{\tr}{\mathop{\bf tr}}
\let\originalleft\left
\let\originalright\right
\renewcommand{\left}{\mathopen{}\mathclose\bgroup\originalleft}
\renewcommand{\right}{\aftergroup\egroup\originalright}
\newcommand{\cL}{{\cal L}}
\newcommand{\cLssv}{{\cal L}_{\text{ssv}}}
\providecommand{\keywords}[1]{\textbf{\textit{Keywords---}} #1}
\title{On Squared-Variable Formulations for Nonlinear Semidefinite programming}
\author{Lijun Ding\thanks{Department of Mathematics, University of California San Diego, La Jolla, CA 92093, USA (\texttt{l2ding@ucsd.edu})}     
\and 
Stephen J. Wright\thanks{Department of Computer Sciences,
University of Wisconsin, Madison, WI 53706, USA (\texttt{swright@cs.wisc.edu}).
}}
\begin{document}
\maketitle

\begin{abstract}
    In optimization problems involving smooth functions and real and matrix variables, that contain matrix semidefiniteness constraints, consider the following change of variables: Replace the  positive semidefinite matrix $X \in \symMat^\dm$ by a matrix product $FF^T$, where $F \in \real^{\dm \times \dm}$ or $F \in \symMat^\dm$.
    The formulation obtained in this way is termed ``squared variable," by analogy with a similar idea that has been proposed for real (scalar) variables.
    It is well known that points satisfying first-order conditions for the squared-variable reformulation do not necessarily yield first-order points for the original problem.
    There are closer correspondences between second-order points for the squared-variable reformulation and the original formulation. 
    These are explored in this paper, along with correspondences between local minimizers of the two formulations.
\end{abstract}

\keywords{Optimization with matrix variables, Positive semidefinite matrices, Second-order optimality conditions, Matrix factorization}

\section{Introduction} \label{sec:intro}

Consider a smooth objective $f:\real^n \rightarrow \real$ and a smooth matrix function $C: \real^n \rightarrow \symMat^\dm$, where $\symMat^\dm$ denotes the space of symmetric $\dm  \times \dm$ matrices.
From these functions, we define a positive semidefinite (PSD) constrained optimization problem
\begin{equation}
    \label{eq:f} \tag{NSDP}
    \min_{x \in \real^n} \, f(x) \;\; \mbox{s.t.} \;\; C(x) \succeq 0,
\end{equation}
with ``$\succeq 0$" denoting positive semidefiniteness.\footnote{A symmetric matrix is positive semidefinite if  all its eigenvalues are nonnegative.}
An equivalent formulation with matrix squared-slack variables (SSV) is
\begin{equation}
    \label{eq:ssv} \tag{SSV}
    \min_{x \in \real^n,F\in \R^{\dm \times\dm}} \, f(x) \;\; \mbox{s.t.} \;\; C(x) = FF^\top.
\end{equation}
(Note that $F$ is not assumed to be symmetric in general.)
For problems whose only constraint is a semidefiniteness constraint on the matrix variable, a direct-substitution approach can be used instead. 
In particular, the problem 
\begin{equation}
    \label{eq:bc} \tag{BC}
    \min_{X \in \symMat^\dm}\, h(X) \;\; \mbox{s.t.} \;\; X \succeq  0,
\end{equation}
with a smooth objective $h:\symMat^\dm \rightarrow \RR$, can be reformulated as an  unconstrained problem
\begin{equation} \label{eq:bc.dss} \tag{DSS}
\min_{F \in \RR^{\dm \times \dm}} \, g(F) := h(FF^\top ).
\end{equation}

We call \eqref{eq:ssv} and \eqref{eq:bc.dss}  ``squared-variable formulations" to emphasize the connection with similar formulations involving scalar variables, although the matrix $F$ is not actually ``squared" in these formulations unless it happens to be symmetric.

This paper studies the correspondence between \eqref{eq:f} and \eqref{eq:ssv} of points satisfying second-order necessary conditions, and the similar correspondence between \eqref{eq:bc} and \eqref{eq:bc.dss}. 
We use the terminology ``2NC" to mean ``second-order necessary conditions for optimality," ``2NP" to mean  ``a point satisfying 2NC," ``2SC" to mean ``second-order sufficient conditions for optimality," ``1C" to mean ``first-order optimality conditions," and  ``1P" to mean ``a point satisfying 1C."

2NC for \eqref{eq:ssv} and \eqref{eq:bc.dss} are well known from the optimality theory for  nonlinear programs; see for example \cite[Section~12.4]{NocW06}. 
However, for problems \eqref{eq:f} and \eqref{eq:bc} with PSD constraints, 2NC are less well known and are more complicated due to the curvature of the PSD cone.
\footnote{Strictly speaking, our notion of 2NC in Section \ref{sec: DS} and \ref{sec: ssv} is a weak form of second-order condition as it requires the key inequality to hold only over a \emph{subspace}. A stronger notion of second-order necessary condition, termed s2NC, requires this inequality to hold over a \emph{cone} that contains the subspace. However, verifying s2NC can be computationally intractable. It reduces to 2NC under the so-called strict complementarity condition, which is almost always assumed in the literature \cite{shapiro1997first,lourencco2018optimality}, though not here. 
Since our ``weaker" 2NC is tractable to check and is tied closely to the squared-slack variable formulation, it is the central notion of our paper. We discuss s2NC and its relationship with 2NC in Appendix \ref{sec: SOC discussion}.}
We define these 2NC in Sections~\ref{sec: DS} and \ref{sec: ssv} and discuss their history in Section~\ref{sec: RLwork}. 

\paragraph{Main result} Despite the complicated nature of the PSD cone, there is a close relationship between 2NP for the original  constrained formulations and the squared-variable formulations.
The main result of this paper, Theorem~\ref{th:f}, shows that 
\begin{quote}
    There is a direct correspondence between 2NP of \eqref{eq:f} and 2NP of \eqref{eq:ssv}.
    That is, if $F$ is a 2NP of \eqref{eq:ssv}, then $X=FF^\top$ is a 2NP of \eqref{eq:f}. 
    Conversely,
    if $X$ is a 2NP of \eqref{eq:f}, then any $F$ with $FF^\top = X$ is a 2NP of \eqref{eq:ssv}.
\end{quote}
The corresponding conclusion holds for \eqref{eq:bc} and \eqref{eq:bc.dss}, as shown in Theorem~\ref{th:bc}.

The correspondence between 2NPs of the respective formulations leads to potential computational benefits. 
Since certain algorithms (in particular, algorithms for equality constrained nonlinear programming) can be applied to \eqref{eq:ssv} but not to \eqref{eq:f}, our main result opens the way to a wider range of approaches for solving \eqref{eq:f}.
{Publicly available solvers for the formulation \eqref{eq:f} are scarce, except in special cases (for example, when $f$ and $C$ are linear functions of $x$).}
Similarly,  the unconstrained problem \eqref{eq:bc.dss} admits a wide and diverse class of methods by comparison with the semidefinite-constrained matrix optimization problem \eqref{eq:bc}.
Achieving a 2NP is normal in practice for first-order methods in unconstrained optimization \cite{lee2016gradient} and is provable for many algorithms under appropriate conditions \cite{jin2017escape,sahin2019inexact,xie2021complexity}. 

Our result can be regarded as a matrix analog of the squared variable approach in nonlinear programming, \cite[Section~3.3.2]{Ber99}, \cite[Theorem 1]{li2021simplex}, and \cite[Theorem 2.3 and Theorem 3.3]{ding2023squared}. Compared to these earlier results, two important technicalities appear. 
First, due to the positive semidefiniteness constraint,  there is an extra curvature term in the second-order condition of \eqref{eq:f}. Second, the noncommutativity of matrices requires delicate handling.

\underline{Local Minimizers.}
The correspondence between local minimizers (rather than 2NPs) of the two formulations \eqref{eq:f} and \eqref{eq:ssv} is as follows. 
Theorem~\ref{thm:f-ssv-local_min} shows that $(x,F)$ is a local minimizer of \eqref{eq:ssv} if and only if $x$ is a local minimizer of \eqref{eq:f}.  
However, the equivalence fails to hold for \emph{strict} local minimizers, due to rotational symmetry in the product $FF^\top$. 
(That is, we can replace $F$ by $FQ$ in \eqref{eq:ssv}, where $Q$ is any orthogonal $d \times d$ matrix, without affecting the product $FF^T$.)
For the same reason, the second-order sufficient condition for \eqref{eq:ssv} never holds except in the special case $F=0$.

\paragraph{A Variant: Symmetric $F$} 
A variant on the formulations above can be obtained by requiring the matrix $F$ in  \eqref{eq:ssv} and \eqref{eq:bc.dss} to be symmetric, leading to the following variants of  \eqref{eq:ssv} and \eqref{eq:bc.dss}, respectively:
\begin{equation}
    \label{eq:ssv-sym} \tag{SSV-Sym}
    \min_{x \in \real^n,F\in \symMat^\dm} \, f(x) \;\; \mbox{s.t.} \;\; C(x) = F^2,
\end{equation} 
\begin{equation} \label{eq:bc.ssv-sym} \tag{DSS-Sym}
\min_{F \in \symMat^\dm} \, g(F) := h(F^2).
\end{equation}
There are several differences between the symmetrized version \eqref{eq:ssv-sym} and the original version \eqref{eq:ssv} in terms of local minimizers and 2NPs. 
 


\underline{Local Minimizers.} 
Theorem \ref{thm:f-ssv-sym-local_min} says if $x$ is a (strict) local minimizer of \eqref{eq:f}, then any $(x,F)$ with $F$ symmetric and $C(x) =F^2$  is a (strict) local minimizer of  \eqref{eq:ssv-sym}. 
The converse is true for strict local minimizers as well. 
However, for the statement to hold for local minimizers (without strictness), it requires the condition that no pair of nonzero eigenvalues of the matrix $F$ sums to $0$, {that is, 
\begin{equation} \label{eq:ec} \tag{EC}
    \sigma_{i} + \sigma_{j} \not =0, \quad \mbox{for any two nonzero eigenvalues $\sigma_i$ and $\sigma_j$ of $F \in \symMat^\dm$.}
\end{equation}
{We refer to this condition in subsequent discussions as the ``eigenvalue condition." This condition can be traced back to the study of Euclidean Jordan algebra \cite[Theorem IV.2.1 and Corollary IV.2.6]{faraut1994analysis}, \cite[Proposition 1]{sturm2000similarity}, and is also recently revisited by \cite[Proposition 1]{lourencco2018optimality} in the context of nonlinear semidefinite programming.} 
The eigenvalue condition is also necessary in the sense that there exist $f$, $C$, $x$, and $F$ in \eqref{eq:ssv-sym} such that $(x,F)$ is a local minimizer of \eqref{eq:ssv-sym}  but $x$ is not a local minimizer of \eqref{eq:f}; see Example \ref{exm: bc.ssv-sym-local}.\footnote{Our theorem and example correct a misunderstanding in the literature \cite[p.2, under (P2)]{lourencco2018optimality} that the equivalence holds without the eigenvalue condition \eqref{eq:ec}.}
%

%


\underline{Second-Order Necessary Points.} 
Theorem \ref{th:ssv.sym} says that if $x$ is a 2NP of \eqref{eq:f}, then any $(x,F)$ satisfying $C(x) =F^2$ and $F$ symmetric is a 2NP of \eqref{eq:ssv-sym}. 
However, even if $(x,F)$ is a 2NP of \eqref{eq:ssv-sym}, the point $x$ may not even be a 1P of \eqref{eq:f}, {unless $F$ satisfies the eigenvalue condition \eqref{eq:ec}}
(see Example \ref{exm:ssv-sym}). 
This situation can be resolved by requiring $F$ to be PSD.
However, this additional requirement would remove the main algorithmic advantage of \eqref{eq:ssv-sym}, since maintaining  the PSD property is computationally difficult.
This conclusion contrasts starkly with our main result (Theorem~\ref{th:f})  that 2NPs of \eqref{eq:f} and \eqref{eq:ssv} are equivalent. 

The statements above  hold for \eqref{eq:bc}, \eqref{eq:bc.dss}, and \eqref{eq:bc.ssv-sym}, as shown in Theorems~\ref{th:bc}, \ref{th:bc-ssv.sym}, \ref{thm:bc-ssv-local_min}, and \ref{thm:bc-ssv-sym-local_min}, with one exception: 
If $F$ is a 2NP of \eqref{eq:bc.ssv-sym}, then it is a 1P of \eqref{eq:bc}, but not necessarily a 2NP of \eqref{eq:bc}.
The latter point is illustrated in Example~\ref{exm: bc.ssv-sym}.

\subsection{Related work}\label{sec: RLwork}

We discuss here other works that consider product-form variable substitutions and second-order optimality conditions for matrix optimization problems.

\paragraph{Burer-Monteiro} 
Our squared-variable reformulation \eqref{eq:bc.dss} makes use of a {\em square} matrix $F$. 
By contrast, the celebrated Burer-Monteiro approach \cite{burer2003nonlinear} applied to \eqref{eq:bc} allows $F$ to be non-square, of dimensions $d\times k$ with $k\leq d$, leading to the formulation 
\begin{equation} \label{eq: bc.ssv.bm} \tag{DSS-BM}
\min_{F\in \R^{d\times k}} \, g(F) := h(FF^\top ).
\end{equation}
{For the much-studied linear semidefinite program (a convex problem) defined by}
\begin{equation}\label{eq: sdp} \tag{LSDP}
\min_{X\in \symMat^d} \; \tr(CX) \quad \text{s.t.}\; \tr(A_iX) =b_i, \, i=1,\dots,m, \;\text{and}\;X\succeq 0,     
\end{equation}
where $A_i,C\in \symMat^d$ and $b_i \in \R$, the  Burer-Monteiro formulation is
\begin{equation}    \label{eq: sdp.bm} \tag{LSDP-BM}
\min_{F\in \R^{\dm \times k}} \; \tr(CFF^\top ) \quad \text{s.t.}\; \tr(A_i FF^\top ) =b_i, \, i=1,\dots,m.
\end{equation}
From \cite[Proposition 2.3]{burer2003nonlinear}, it is known that for $X = FF^\top$ with $F\in \R^{d\times d}$ (that is, $k=d$ and $F$ square), the matrix $X$ is a local minimizer of \eqref{eq: sdp} if and only if $F$ is a local minimizer of \eqref{eq: sdp.bm}. However, finding a local minimizer for a nonconvex problem
is in general NP-hard. 
From \cite[Theorem~4.1]{burer2005local} and \cite[Lemma~10]{bandeira2016low}, a matrix $F$ that is a 2NP of 
\eqref{eq: sdp.bm} yields a 1P for \eqref{eq: sdp} if  the matrix $F$ is \emph{rank deficient}, that is, $\rank(F) <k$. 
The same property holds for \eqref{eq:bc} and \eqref{eq:bc.dss} for convex functions $h$, from \cite[Lemma~1]{bhojanapalli2018smoothed}. \footnote{Those works actually consider only convex problems and  address global optimality but not first-order conditions directly. However, the approach used in those papers is to check whether or not first-order conditions are satisfied.}
\footnote{Note this result does not show that a 2NP of \eqref{eq:bc.dss} yields a 1P for \eqref{eq:bc}, unless $F$ is rank deficient. By contrast, our result of Theorem~\ref{th:bc} shows that if $F$ a 2NP of \eqref{eq:bc.dss} then it yields a 2NP of \eqref{eq:bc}, even if $F$ has full rank $\dm$. } 
Much work \cite{barvinok1995problems,pataki1998rank,boumal2016non,boumal2020deterministic,bhojanapalli2018smoothed,waldspurger2020rank,zhang2022improved,cifuentes2022polynomial,o2022burer} has been devoted to understanding how the choice of $k$ affects the {\em global optimality} of 2NPs of \eqref{eq: bc.ssv.bm} with convex $h$, or of 2NPs for \eqref{eq: sdp.bm}. 
However, as shown in \cite[Section 3]{bhojanapalli2018smoothed}, the parameter $k$ may need to be as large as $n-1$, even when the optimal solution is rank $1$. 
Otherwise, there are points that satisfy second-order conditions yet are not globally optimal. 
Even for such structured problems as Max-Cut SDP, $k$ must be larger than $n/2$ to prevent such a situation \cite{o2022burer}. 
Regarding the negative results, we refer the reader to \cite[Section~1.2]{bhojanapalli2018smoothed}, \cite[pp.~2-3]{waldspurger2020rank}, and \cite[p.~2, Appendix~A]{o2022burer},  for detailed discussions on the choice of $k$ and the global optimality of points satisfying second-order conditions for  \eqref{eq: bc.ssv.bm} or of \eqref{eq: sdp.bm}.

\paragraph{Second order conditions} As discussed in \cite{shapiro1997first,forsgren2000optimality,lourencco2018optimality}, the second order condition of \eqref{eq:f} involves an extra term that arises from the curvature of the cone of positive semidefinite matrices. 
In \cite[Theorem 9]{shapiro1997first}, under the transversality condition \cite[Definition 4]{shapiro1997first} 
and a strict complementarity condition \cite[(18)]{shapiro1997first}, Shapiro derived second-order sufficient conditions together with necessary conditions for local minimizers based on \cite[Theorem~4.2]{cominetti1990metric}, by making use of first-order and second-order tangent cones. 
{Like the linear independence constraint qualification (LICQ) for nonlinear programming problems in their more familiar form,} the transversality condition ensures the existence and uniqueness of the Lagrangian multiplier \cite[Proposition~7]{shapiro1997first}.
But this condition is not equivalent to LICQ even if $C(x)$ in \eqref{eq:f} is  diagonal for all $x$ \cite[p.~309]{shapiro1997first}. 
In \cite[Theorems~2, 3]{forsgren2000optimality}, Forsgren derives second-order conditions for local minimizers by following a more elementary approach, making use of  eigenvalue decomposition, the matrix Farkas Lemma, and classical techniques from nonlinear programming \cite[Chapter~2]{fiacco1990nonlinear} and \cite[Chapter~15]{luenberger1984linear}. 
Notably, the constraint qualification \cite[Section 2.3]{forsgren2000optimality} reduces to the usual LICQ in nonlinear programming when the matrix $C(x)$ in \eqref{eq:f} is diagonal for all $x$. 
%
%
%
%
In \cite{lourencco2018optimality},  under the transversality condition and the strict complementarity condition, Louren{\c c}o et al. derived second-order conditions for \eqref{eq:f} using the more approachable second-order conditions for \eqref{eq:ssv-sym} (but not \eqref{eq:ssv}).  
We note that \cite{shapiro1997first,lourencco2018optimality} actually considered a stronger form of 2NC, which we term ``s2NC," while \cite{forsgren2000optimality}, for reasons of computational feasibility,  considered the same (weaker) 2NC that we consider here.
Under strict complementarity, which  is assumed throughout \cite{lourencco2018optimality}, s2NC reduces to  2NC. 
We discuss the relationship between s2NC and 2NC further in Appendix~\ref{sec: SOC discussion}.


\paragraph{Second-order points of \eqref{eq:ssv-sym} and \eqref{eq:f}}
Louren{\c c}o et al. \cite{lourencco2018optimality} consider the relationship of the points satisfying second-order sufficient conditions (2SC)  or second-order necessary conditions (2NC) for \eqref{eq:ssv-sym}  and \eqref{eq:f}, and derive second-order conditions for \eqref{eq:f} from those conditions of \eqref{eq:ssv-sym}, making the 2SC and 2NC of \eqref{eq:f} more approachable. 
For {\em sufficient} conditions, they show the following: If $(x,\Lambda)$ (where $\Lambda$ is a Lagrangian multiplier for the constraint $C(x) \succeq 0$) satisfies 2SC of \eqref{eq:f} with strict complementarity, then $(x,\sqrt{C(x)},\Lambda)$ satisfies 2SC for \eqref{eq:ssv-sym}. \footnote{Here $\sqrt{C(x)}$ is defined to be the square root of $C(x)$ formed by taking the nonnegative square roots of the eigenvalues. 
That is, $\sqrt{C(x)}$ is positive semidefinite and $C(x) = \sqrt{C(x)}\sqrt{C(x)}$.}
Conversely, if $(x,F,\Lambda)$ satisfies 2SC of \eqref{eq:ssv-sym}, then under strict complementarity and transversality of $(x,\Lambda)$, it satisfies 2SC for \eqref{eq:f}.  
For {\em necessary} conditions, they show the following: Under transversality and strict complementarity, $(x,\Lambda)$ satisfies 2NC of \eqref{eq:f} if and only if $(x,\sqrt{C(x)},\Lambda)$ satisfies 2NC of \eqref{eq:ssv-sym} and $\Lambda \succeq 0$. Thus, 2SC for a point $(x,\Lambda)$ satisfying transversality and strict complementarity can be written in terms of 2SC of \eqref{eq:ssv-sym}. Similarly, 2NC for a point $(x,\Lambda)$ satisfying transversality and strict complementarity can be written in terms of 2NC of \eqref{eq:ssv-sym} with the extra condition $\Lambda \succeq 0$.

Compared to the equivalence of 2NC results in \cite{lourencco2018optimality} for \eqref{eq:bc} and \eqref{eq:bc.ssv-sym}, our results show that the transversality condition and the strict complementarity condition are not necessary.
We also show the equivalence result for 2NP over a larger set of points. 
Specifically, in Theorem \ref{th:ssv.sym}, we show that {under the eigenvalue condition \eqref{eq:ec} (that no pair of nonzero eigenvalues of $F$ sums to zero),}
a point $(x,\Lambda)$ satisfies 2NC of \eqref{eq:f} if and only if $(x,F,\Lambda)$ satisfies 2NC of \eqref{eq:ssv-sym}. 
(In this case,  we do not require $(x,F,\Lambda)$ in \eqref{eq:ssv-sym} to satisfy $F=\sqrt{C(x)}$ and $\Lambda \succeq 0$, which is required in \cite{lourencco2018optimality}.) 
We also provide an example confirming the necessity of the eigenvalue condition \eqref{eq:ec}:
In Example~\ref{exm:ssv-sym}, we show for some choices of $f$ and $C$, there is a point $(x,F,\Lambda)$ satisfying  2NC for \eqref{eq:ssv-sym} with $F$ violating \eqref{eq:ec}, such  $(x,\Lambda)$ does not satisfy even first-order conditions for \eqref{eq:f}.
Moreover, in Theorem~\ref{th:bc-ssv.sym}, we show that in the special case \eqref{eq:bc.ssv-sym}, if $F$ satisfies the 2NC of \eqref{eq:ssv-sym}, then $F^2$ is a 1P of \eqref{eq:bc}.  
In Example~\ref{exm: bc.ssv-sym}, we confirm the necessity of this condition:
We show that for some choice of function $h$, there is a matrix $F$, {feasible in \eqref{eq:bc.ssv-sym} and satisfying 2NC for \eqref{eq:bc.ssv-sym} but violating  the eigenvalue condition \eqref{eq:ec}}, such that $F^2$ is only a 1P of \eqref{eq:bc} and not a 2NP.

Finally, we note the paper \cite{levin2024effect}, which develops a general framework for the relationship between points satisfying first- or second-order necessary conditions for a reformulation (e.g. \eqref{eq:bc.dss}) and the points satisfying first- or second-order necessary conditions for the original formulation (e.g. \eqref{eq:bc}) for a variety of problems.
Specific to the relationship of points satisfying necessary optimality conditions for \eqref{eq:bc} and \eqref{eq:bc.dss}, they show in \cite[Proposition 2.7]{levin2024effect} that  a 2NP $F$ of \eqref{eq:bc.dss} gives a 1NP $FF^\top$ of \eqref{eq:bc}, which is almost implied by existing work described in our text above on the  Burer-Monteiro approach. 
There is a sufficient condition \cite[Proposition 2.2]{levin2024effect} for a 2NP of the reformulation to yield a 2NP for the original formulation in the general case, which essentially requires equivalence of 1Ps and is not satisfied in our case. 
With this general framework in mind, our result is potentially more surprising: No extra condition is needed for a 2NP $F$ of \eqref{eq:bc.dss} to yield a 2NP of \eqref{eq:bc}.

%

\subsection{Notation} 
\label{sec:notation}
For any second-order continuously differentiable map $f: V\rightarrow W$ between two finite dimensional vector spaces $V,W$, we define the first order directional derivative and second order directional derivative along a direction $v\in V$ at an $x\in V$ as 
\begin{align*}
   Df_x[v] & := \frac{d}{dt}\left( t \mapsto f(x+tv)\right) \vert _{t=0} \in W, \\
   D^2f_x[v,v] & := \frac{d^2}{dt^2}\left( t \mapsto f(x+tv)\right)\vert _{t=0}\in W.
\end{align*}
If $W = \R$ and $V$ has an inner product $\inprod{\cdot}{\cdot}$, the gradient $\nabla f(x) \in V$ is the unique element such that $ Df_x[v] = \inprod{\nabla f(x)}{v}$ for all $v\in V$. 
If both spaces are equipped with  inner products, we denote by $Df^*_x$ the adjoint map of the linear map $Df_x: V\rightarrow W$ that maps $v\in V$ to $Df_x[v]$. 
If $V= \R^n$, we equip it with the dot product, and if $V= \symMat^\dm$, we equip it with the trace inner product. 
The notation $e_i$ is the $i$-th coordinate vector in a proper finite dimensional space. 
The notation $I_k$ is the identity matrix of size $k\times k$ and $0_k$ denotes the vector of all zeros with length $k$. 
For any two square matrices $A$ and $B$, the symmetric product is $A\circ B := \frac{1}{2}(AB^\top +BA^\top)$. The (Moore-Penrose) pseudoinverse of a matrix $A$ is denoted as $A^\dagger$.
Given a matrix $X \in \symMat^\dm$ with rank $r$, we use $V_X \in \RR^{d\times (d-r)}$ to denote a matrix with orthonormal columns whose column space is the \emph{null space} of $X$.

\subsection{Outline} \label{sec:outline}

In Section~\ref{sec: DS}, we study the problem \eqref{eq:bc} which has a single matrix variable and a single constraint (the semidefiniteness constraint) and its squared-variable analogs \eqref{eq:bc.dss} and \eqref{eq:bc.ssv-sym} (containing nonsymmetric and symmetric matrices, respectively).
Section~\ref{sec: ssv} considers \eqref{eq:f} and its squared-variable formualtions \eqref{eq:ssv} and \eqref{eq:ssv-sym}, again involving nonsymmetric and symmetric matrices, respectively.
In all cases, we focus points satisfying second-order necessary conditions for the various formulations, and the relationships between such points for the original formulation and the corresponding points in the squared-variable formulations.
Section~\ref{sec:2.3} considers an application to problems containing nuclear norm regularization terms, which arise in interesting applications.

Appendix~\ref{app:A} proves a technical result for Section~\ref{sec: DS}, while Appendix~\ref{app:B} describes relationships between the {\em local minimizers} of the three formulations considered in Section~\ref{sec: DS}.
Appendix~\ref{app:C} considers a stronger form of second-order conditions for \eqref{eq:f} that has been used by other authors, showing that it reduces to the second-order conditions considered in the majority of this paper when a strict complementarity condition holds.
Appendix~\ref{app:D} explores further the example of Section~\ref{sec:2.3}, showing that correspondences also exist between first-order points and global solutions of the original problem and its reformulation (and not just between the second-order points that are the focus of this paper).

\section{Direct substitution} \label{sec: DS}
In this section, we describe relationships between 2NPs of \eqref{eq:bc}, \eqref{eq:bc.dss}, and \eqref{eq:bc.ssv-sym}.
After defining the second-order necessary conditions for the various formulations, we explore correspondences between second-order points of   \eqref{eq:bc} and \eqref{eq:bc.dss} in \Cref{sec:2.1}. 
In \Cref{sec:2.2}, we look at correspondences between second-order points of \eqref{eq:bc} and \eqref{eq:bc.ssv-sym}. An application to nuclear norm minimization is given in \Cref{sec:2.3}.

\subsection{Equivalence between 2NPs of \eqref{eq:bc} and \eqref{eq:bc.dss}} 
\label{sec:2.1}

First- and second-order conditions for \eqref{eq:bc} and \eqref{eq:bc.dss} are defined as follows.

\begin{definition}[First order and second order conditions of \eqref{eq:bc}]\label{def:bc}
We say that $X\in \symMat^\dm$ satisfies {\em first-order} conditions (1C) for \eqref{eq:bc} if 
\begin{equation} \label{eq: bc-foc}\tag{BC-1C}
X\succeq 0, \; \nabla h(X) \succeq  0,\; \text{and} \;\nabla h(X)X =0. 
\end{equation}
We say that $X \in \symMat^\dm$ satisfies {\em second-order necessary} conditions (2NC) for \eqref{eq:bc} if
it satisfies \eqref{eq: bc-foc} and in addition 
\begin{equation}\label{eq:bc-soc}\tag{BC-2NC}
D^2 h_X[W,W] + 2\tr(WX^\dagger W \nabla h(X)) \geq 0,
\end{equation}
for all $W \in \symMat^{\dm}$ such that  $V_X^\top  WV_X =0$. 
(See \Cref{sec:notation} for definitions of $X^\dagger$ and $V_X$.)
\end{definition}

\begin{definition}[First order and second order conditions of \eqref{eq:bc.dss}]\label{def:bcssv}
We say that $(x,F)\in \RR^n \times \RR^{d \times d}$ satisfies {\em first-order} conditions (1C) for \eqref{eq:bc.dss} if $\nabla g(F) =0 $ or equivalently,
\begin{equation} \label{eq: bc.ssv-foc}\tag{DSS-1C}
\nabla g(F) = 2 \nabla h(FF^\top )F =0.
\end{equation}
We say that $F$ satisfies {\em second-order necessary} conditions (2NC) for \eqref{eq:bc.dss} if
it satisfies \eqref{eq: bc.ssv-foc} and in addition 
$D^2 g_F[\Delta ,\Delta ] \geq 0$ for all $\Delta  \in \R^{\dm \times \dm}$ (not necessarily symmetric), where
\begin{equation}\label{eq:bc.ssv-soc}\tag{DSS-2NC}
D^2 g_F[\Delta ,\Delta ] = 
D^2 h_X[W,W] + 2\tr(\nabla h(FF^\top)\Delta  \Delta ^\top ) \geq 0,
\end{equation}
where $W = F\Delta ^\top + \Delta  F^\top$. 
\end{definition}

We now introduce two technical results, then state the formal equivalence between 2NPs as Theorem~\ref{th:bc}.

\begin{lemma}\label{lem: nonnegativeT2}
Suppose a rank $r$ matrix $X\in \symMat^\dm$ has a factorization $X = FF^\top$ for some $F\in \real^{\dm \times k}$ with $k\geq r$ and $S\in \symMat^\dm $ satisfies $SF=0$ and $S\succeq 0$. For any $\Delta \in \real^{\dm \times \dm}$, let $W = F\Delta ^\top + \Delta F^\top$. Then 
$\tr(S (\Delta \Delta^\top - WX^\dagger W))\geq 0$. 
\end{lemma}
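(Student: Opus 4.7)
The plan is to exploit the facts that $SF=0$ (hence $F^\top S=0$, by symmetry of $S$) and that the combination $F^\top X^\dagger F$ is precisely the orthogonal projection onto the row space of $F$. Together these collapse $\tr(SWX^\dagger W)$ to a single surviving term, and what remains can be compared to $\tr(S\Delta\Delta^\top)$ by an identity-splitting argument.

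Concretely, I would begin by writing a thin SVD $F = U\Sigma V^\top$ with $U\in\RR^{\dm\times r}$, $\Sigma\in\RR^{r\times r}$ positive diagonal, and $V\in\RR^{k\times r}$ (possible since $\rank(F)=\rank(FF^\top)=\rank(X)=r\le k$). Then $X = U\Sigma^2 U^\top$, $X^\dagger = U\Sigma^{-2}U^\top$, and a direct computation gives the key identity
\begin{equation*}
F^\top X^\dagger F \;=\; V\Sigma U^\top U\Sigma^{-2}U^\top U\Sigma V^\top \;=\; VV^\top \;=: P,
\end{equation*}
where $P\in\symMat^k$ is the orthogonal projection onto $\range(F^\top)$.

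Next I would expand
\begin{equation*}
WX^\dagger W \;=\; F\Delta^\top X^\dagger F\Delta^\top \;+\; F\Delta^\top X^\dagger \Delta F^\top \;+\; \Delta F^\top X^\dagger F\Delta^\top \;+\; \Delta F^\top X^\dagger \Delta F^\top,
\end{equation*}
and evaluate $\tr(S\,\cdot\,)$ of each summand using cyclic invariance. The first two terms contain a factor $SF$ and vanish; the fourth contains (after a cyclic rotation) a factor $F^\top S=0$ and vanishes as well. Only the third term survives, and by the key identity it equals
\begin{equation*}
\tr\bigl(S\,WX^\dagger W\bigr) \;=\; \tr\bigl(S\,\Delta\, F^\top X^\dagger F\,\Delta^\top\bigr) \;=\; \tr\bigl(S\,\Delta P\Delta^\top\bigr).
\end{equation*}

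The conclusion then follows from the decomposition $I_k = P + P^\perp$: writing $\tr(S\Delta\Delta^\top) = \tr(S\Delta P\Delta^\top) + \tr(S\Delta P^\perp\Delta^\top)$ yields
\begin{equation*}
\tr\bigl(S(\Delta\Delta^\top - WX^\dagger W)\bigr) \;=\; \tr\bigl(S\Delta P^\perp\Delta^\top\bigr) \;=\; \tr\bigl(P^\perp\Delta^\top S\Delta P^\perp\bigr) \;\geq\; 0,
\end{equation*}
since $S\succeq 0$ implies $\Delta^\top S\Delta \succeq 0$, and the trace of any PSD matrix is nonnegative. The only step that requires care is the bookkeeping in the four-term expansion and ensuring each cancellation is justified by a legal cyclic rearrangement that exposes either $SF$ or $F^\top S$; this is the main technical obstacle, but it is routine once one commits to the SVD-based identification of $F^\top X^\dagger F$ with a projection.
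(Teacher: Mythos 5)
Your proposal is correct and follows essentially the same route as the paper's proof: both use $SF=0$ (hence $F^\top S=0$) with cyclic invariance of the trace to reduce $\tr(SWX^\dagger W)$ to the single surviving term $\tr(S\Delta F^\top X^\dagger F\Delta^\top)$, and then conclude from $S\succeq 0$ together with $I - F^\top X^\dagger F\succeq 0$. The only difference is presentational: you justify the positive semidefiniteness of $I - F^\top X^\dagger F$ explicitly by exhibiting $F^\top X^\dagger F$ as the orthogonal projection $VV^\top$ via the thin SVD, whereas the paper simply cites properties of the pseudoinverse $X^\dagger=(FF^\top)^\dagger$.
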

\begin{proof}
    We first simplify the term 
$\tr(S WX^\dagger W)$ using $W =  F\Delta ^\top + \Delta  F^\top$:
\begin{equation}
    \begin{aligned}
\tr(S WX^\dagger W)  & = \tr( S WX^\dagger ( F\Delta ^\top + \Delta  F^\top) )\\ 
&       = \tr( S WX^\dagger  F\Delta ^\top) + \tr( S WX^\dagger \Delta  F^\top) \\
& \overset{(a)}{=} \tr( S WX^\dagger  F\Delta ^\top)\\
& \overset{(b)}{=} 
\tr(S \Delta  F^\top X^\dagger  F\Delta ^\top). 
    \end{aligned}
\end{equation}
where $(a)$ uses the cyclic property of trace and  $SF = 0$, and  $(b)$ uses $S  F =0$ and the definition of $W$ again. 
%
%
Hence, we have
\begin{equation*}
  \tr(S(\Delta \Delta^\top - W X^\dagger W)) 
  = \tr(S \Delta  (I - F^\top X^\dagger F)\Delta  ) 
      = \inprod{S}{ \Delta  (I - F^\top X^\dagger F)\Delta^\top }.
\end{equation*}
Since $(I - F^\top X^\dagger F)\succeq 0$ by the properties of the pseudoinverse $X^\dagger = (FF^\top)^\dagger$, we know that
$\Delta  (I - F^\top X^\dagger F)\Delta^\top \succeq 0$. Because 
$S \succeq 0$ by our assumption, we have the desired result.
\end{proof}

Our second technical result has a more complicated proof, which is the subject of Appendix~\ref{app:A}.
\begin{lemma}\label{lem: constructionDelta}
Suppose a matrix $X \in \symMat^\dm$ with rank $r \le d$ has a factorization $X = FF^\top$ for some $F\in \real^{\dm \times k}$ with $k\geq r$ and $S\in \symMat^\dm $ satisfies $SX=0$. Then for any $W\in \symMat^\dm$ such that $V_X^\top WV_X =0$,
there is some $\Delta \in \real^{\dm \times k}$ such that (i) $W = \Delta  F^\top + F \Delta ^\top$, and (ii) $\tr(S (WX^\dagger W -\Delta \Delta ^\top)) =0$. 
(See \Cref{sec:notation} for definitions of $X^\dagger$ and $V_X$.)
Moreover, if the matrix $F$ is symmetric and satisfies the eigenvalue condition \eqref{eq:ec} (that is, no pair of nonzero eigenvalues of $F$ sums to $0$), then $\Delta$ can be chosen to be symmetric as well. 
\end{lemma}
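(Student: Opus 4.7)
The plan is to move to an SVD (respectively eigendecomposition) basis of $F$, in which equation (i), $W = \Delta F^\top + F \Delta^\top$, becomes block-structured with Sylvester-type blocks, and then to use the free blocks of $\Delta$ to force condition (ii) to hold.

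For general $F \in \real^{d\times k}$, I would take the thin SVD $F = U_1 \Sigma_r V_1^\top$ with $\Sigma_r \in \real^{r\times r}$ invertible, complete to orthogonal $U=[U_1,U_2]$ and $V=[V_1,V_2]$, and note that $V_X = U_2$ up to a right orthogonal factor. The hypothesis $V_X^\top W V_X = 0$ then makes the $(2,2)$-block of $\tilde W := U^\top W U$ vanish, while $SX=0$ makes $\tilde S := U^\top S U$ supported only in its $(2,2)$-block $\tilde S_{22}$. Writing $\Delta = U \tilde\Delta V^\top$ with blocks $A \in \real^{r\times r}$, $B \in \real^{r\times(k-r)}$, $C \in \real^{(d-r)\times r}$, $D \in \real^{(d-r)\times(k-r)}$, equation (i) decouples into $A\Sigma_r + \Sigma_r A^\top = W_{11}$ and $\Sigma_r C^\top = W_{12}$, with $B$ and $D$ free. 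These are solved by $A = \tfrac12 W_{11}\Sigma_r^{-1}$ and $C = W_{12}^\top \Sigma_r^{-1}$. A direct block-trace computation then shows $\tr(S\, WX^\dagger W) = \tr(\tilde S_{22}\, CC^\top)$ and $\tr(S\,\Delta\Delta^\top) = \tr(\tilde S_{22}\,(CC^\top + DD^\top))$, so condition (ii) collapses to $\tr(\tilde S_{22}\, DD^\top) = 0$, which I enforce by taking $D = 0$ (and, for concreteness, $B = 0$).

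For the symmetric variant, I would replace the SVD by the eigendecomposition $F = U\Lambda U^\top$, collect the nonzero spectrum into $\Lambda_r = \diag(\lambda_1,\ldots,\lambda_r)$, and parametrize $\Delta = U \tilde\Delta U^\top$ with $\tilde\Delta$ symmetric, partitioned into symmetric diagonal blocks $A, D$ and off-diagonal block $B$. Equation (i) then becomes $A\Lambda_r + \Lambda_r A = W_{11}$ and $\Lambda_r B = W_{12}$; the second is solved by $B = \Lambda_r^{-1} W_{12}$. The first is a symmetric Sylvester system, and this is where I expect the main delicacy: because $\Lambda_r$ is diagonal, it decouples entrywise into $(\lambda_i + \lambda_j) A_{ij} = (W_{11})_{ij}$, which is solvable precisely when no two nonzero eigenvalues of $F$ sum to zero --- i.e., under \eqref{eq:ec}. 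The verification of (ii) then parallels the nonsymmetric case with $D = 0$, yielding a symmetric $\Delta$ with the required properties.
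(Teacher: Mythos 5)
Your proposal is correct and follows essentially the same route as the paper: the paper's choice $\Delta = U(L \odot (U^\top W U))U^\top (F^\dagger)^\top$ with $L$ having $\tfrac12$ in its upper-left block is exactly your blockwise solution $A=\tfrac12 W_{11}\Sigma_r^{-1}$, $C=W_{12}^\top\Sigma_r^{-1}$, $B=D=0$, and in the symmetric case the paper's $\bar\Delta_{ij}=[U^\top WU]_{ij}/(\sigma_i+\sigma_j)$ is precisely your entrywise Sylvester solution under \eqref{eq:ec}. The verification of (ii) via the vanishing $(2,2)$-block of $WX^\dagger W-\Delta\Delta^\top$ against the $(2,2)$-supported $U^\top S U$ also matches the paper's argument.
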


We are ready for the main theorem of this section, which concerns an equivalence between the second-order conditions stated in Definitions~\ref{def:bc} and \ref{def:bcssv}.
\begin{theorem}
    \label{th:bc}
    If a matrix $X \in \symMat^\dm$ satisfies 2NC for \eqref{eq:bc}, then  any $F \in \RR^{\dm \times \dm}$ with $FF^\top =X$ satisfies 2NC for \eqref{eq:bc.dss}. Conversely, 
if a matrix $F \in \RR^{\dm \times \dm}$ satisfies 2NC for \eqref{eq:bc.dss}, then the matrix $X=FF^\top$ satisfies 2NC for \eqref{eq:bc}. 
\end{theorem}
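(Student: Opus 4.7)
\emph{Plan.} The theorem has two implications, and my strategy for both is to use Lemmas~\ref{lem: nonnegativeT2} and \ref{lem: constructionDelta} as a bridge between the curvature term $2\tr(WX^\dagger W\,\nabla h(X))$ appearing in \eqref{eq:bc-soc} and the curvature term $2\tr(\nabla h(X)\Delta\Delta^\top)$ appearing in \eqref{eq:bc.ssv-soc}, under the parametrization $W = F\Delta^\top + \Delta F^\top$.

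\emph{Forward direction.} Given $X$ satisfying 2NC for \eqref{eq:bc} and any $F$ with $FF^\top = X$, I would first establish $\nabla h(X)F = 0$: from $\nabla h(X)\succeq 0$ and $\nabla h(X)FF^\top = 0$, taking trace gives $\fronorm{\nabla h(X)^{1/2}F}^2 = 0$, so $\nabla h(X)^{1/2}F = 0$ and hence $\nabla h(X)F = 0$, which is \eqref{eq: bc.ssv-foc}. For the second-order part, fix an arbitrary $\Delta\in\RR^{d\times d}$ and set $W = F\Delta^\top + \Delta F^\top$. Then $W$ is symmetric, and because $F^\top V_X = 0$ (a consequence of $V_X^\top FF^\top V_X = 0$), we have $V_X^\top W V_X = 0$. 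Thus \eqref{eq:bc-soc} applies to $W$. Lemma~\ref{lem: nonnegativeT2} with $S = \nabla h(X)$ (which satisfies $S\succeq 0$ and $SF = 0$) gives $\tr(\nabla h(X)\Delta\Delta^\top)\geq \tr(\nabla h(X)WX^\dagger W)$, and combining with \eqref{eq:bc-soc} yields \eqref{eq:bc.ssv-soc}.

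\emph{Reverse direction, first-order part.} Let $F$ satisfy 2NC for \eqref{eq:bc.dss} and set $X = FF^\top$. Then $X\succeq 0$ is automatic, and $\nabla h(X)X = \nabla h(X)FF^\top = 0$ follows from \eqref{eq: bc.ssv-foc}. The delicate task is $\nabla h(X)\succeq 0$. On $\range(X) = \range(F)$, the identity $\nabla h(X)F = 0$ forces $\nabla h(X)$ to vanish. For any $v\in\nullspace(X) = \ker F^\top$, squareness of $F$ ensures $\dim\ker F = \dim\ker F^\top$, so whenever $v$ is nonzero we may pick nonzero $w\in\ker F$. Setting $\Delta = vw^\top$ yields $W = F\Delta^\top + \Delta F^\top = 0$ and reduces \eqref{eq:bc.ssv-soc} to $2\twonorm{w}^2 v^\top\nabla h(X)v\geq 0$, hence $v^\top\nabla h(X)v\geq 0$. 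Combined with the orthogonal decomposition $\RR^d = \range(X)\oplus\nullspace(X)$ and symmetry of $\nabla h(X)$, this yields $\nabla h(X)\succeq 0$.

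\emph{Reverse direction, second-order part and main obstacle.} For any $W\in\symMat^d$ with $V_X^\top W V_X = 0$, Lemma~\ref{lem: constructionDelta} applied with $S = \nabla h(X)$ (valid because $SX = 0$) produces a $\Delta\in\RR^{d\times d}$ such that $W = \Delta F^\top + F\Delta^\top$ and $\tr(\nabla h(X)(WX^\dagger W - \Delta\Delta^\top)) = 0$. Substituting this $\Delta$ into \eqref{eq:bc.ssv-soc} and using the equality to swap $\tr(\nabla h(X)\Delta\Delta^\top)$ for $\tr(\nabla h(X)WX^\dagger W)$ produces exactly \eqref{eq:bc-soc}. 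The principal obstacle is the proof of $\nabla h(X)\succeq 0$ in the reverse direction: this is the single step where squareness of $F$ is used in an essential way, since one needs $\ker F$ nontrivial whenever $\ker F^\top$ is, so that a test direction $\Delta = vw^\top$ can drive $W$ to zero while still exciting the curvature $v^\top\nabla h(X)v$. This subtlety is precisely what fails for non-square Burer--Monteiro factorizations \eqref{eq: bc.ssv.bm}.
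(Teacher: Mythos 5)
Your proposal is correct and follows essentially the same route as the paper: the same $T_1+T_2$ decomposition bridged by Lemmas~\ref{lem: nonnegativeT2} and \ref{lem: constructionDelta}, and the same rank-deficiency test direction $\Delta$ of rank one to extract $\nabla h(X)\succeq 0$. The only cosmetic differences are that you derive $\nabla h(X)F=0$ via a trace/square-root argument rather than the paper's $(\nabla h(X)F)(\nabla h(X)F)^\top=0$ chain, and you take $\Delta=vw^\top$ with $v\in\ker F^\top$, $w\in\ker F$ (so positivity is first obtained only on $\nullspace(X)$ and then extended by the orthogonal decomposition), whereas the paper takes $\Delta=wv^\top$ with $v\in\ker F$ and $w$ arbitrary to get $w^\top\nabla h(X)w\geq 0$ for all $w$ directly; both are valid.
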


\begin{proof}
We prove the two claims in turn.

\underline{\textbf{\eqref{eq: bc-foc}, \eqref{eq:bc-soc} yields \eqref{eq: bc.ssv-foc}, \eqref{eq:bc.ssv-soc}.}} We first show the first order condition \eqref{eq: bc.ssv-foc} hold,  that is, $\nabla h(X)F=0$. 
From the complementarity condition in \eqref{eq: bc-foc}, we know 
$\nabla h(X) X = 0.$ We thus have
\begin{equation}
\begin{aligned}
\label{eq:fg1}
\nabla h(X) X = 0 
&\implies \nabla h(X) FF^\top = 0 \implies \nabla h(X) FF^\top \nabla h(X) =0 \\
&\implies (\nabla h(X) F) (\nabla h(X) F)^\top = 0 
\implies \nabla h(X) F=0,
    \end{aligned}
\end{equation}
as required.

Next, we show \eqref{eq:bc.ssv-soc} also holds for any $F$ with $X = FF^\top$. For any $\Delta  \in \RR^{\dm \times \dm}$, we let $W =  F\Delta ^\top + \Delta  F^\top$. The LHS of \eqref{eq:bc.ssv-soc} can be rearranged as 
\begin{equation}
\begin{aligned} 
    D^2 h_X[W,W] + 2\tr(\nabla h(X)\Delta  \Delta ^\top ) 
  = &  \underbrace{D^2 h_X[W,W] + 2\tr(WX^\dagger W\nabla h(X))}_{T_1}\\
   &+  \underbrace{2\tr(\nabla h(X)(\Delta  \Delta ^\top-WX^\dagger W))}_{T_2}.
\end{aligned}
\end{equation}
Here $T_1\geq 0$ due to \eqref{eq:bc-soc} as $W = F\Delta ^\top + \Delta  F^\top $ satisfies $V_X^\top WV_X =0$, since $F^\top $ and $X$ share the same null space. 

We prove that $T_2 \ge 0$ by invoking Lemma~\ref{lem: nonnegativeT2}. Note that $\nabla h(X) \succeq 0$ by \eqref{eq: bc-foc} and $\nabla h(X) F=0$ as noted above, so the conditions of the lemma are satisfied with $S = \nabla h(X)$. The result of the lemma then shows that $T_2 \ge 0$, completing this part of the proof.

\underline{\textbf{\eqref{eq: bc.ssv-foc}, \eqref{eq:bc.ssv-soc} yields  \eqref{eq: bc-foc}, \eqref{eq:bc-soc}.}} 
We  show first that \eqref{eq: bc-foc} are satisfied.
Because $X = FF^\top$, $X$ is feasible with respect to \eqref{eq:bc}. From the condition $\nabla h(X)F=0$ in \eqref{eq: bc.ssv-foc}, we see $\nabla h(X) X =0$ as well. 
It remains to show that $\nabla h(X)\succeq 0$. 
If $F$ is invertible, then from $\nabla h(X)F=0$, we know $\nabla h(X) =0$, which is PSD. 
If $F$ is not invertible (or not full rank), then there exists $v\not =0$ such that $Fv=0$. For arbitrary $w\in \RR^\dm$, we form $\Delta  = w v^\top$. 
Substituting this choice of $\Delta $ into \eqref{eq:bc.ssv-soc}, noting that  $F\Delta ^\top = Fv w^\top = 0$, and using the definition $W = F\Delta ^\top + \Delta  F^\top$ from Definition~\ref{def:bcssv}, we have 
\begin{equation}
\begin{aligned} \label{eq: dss.proof.nabla_psd}
    D^2 h_X[W,W] + 2\tr(\nabla h(X)\Delta  \Delta ^\top )  \geq 0
   \overset{(a)}{\iff} &2 \twonorm{v}^2\tr(\nabla h(X) ww^\top )\geq 0 \\
   \overset{(b)}{\iff} & \tr(\nabla h(X) ww^\top )\geq 0.
   \end{aligned} 
\end{equation}
The equivalence $(a)$ uses $W=0$ by the choice of $\Delta $. The last equivalence $(b)$ uses the fact that $v\not =0$. Note that last condition is simply the definition of $h(X)\succeq 0$, since $w \in \RR^d$ is arbitrary.
We have therefore completed the demonstration that $X=FF^\top$ satisfies the first order conditions \eqref{eq: bc-foc}.

We show now that the condition \eqref{eq:bc-soc} holds. By comparing \eqref{eq:bc-soc} and \eqref{eq:bc.ssv-soc}, the result will hold if we can show that for any symmetric $W$ such that $V_X^\top W V_X=0$, there is $\Delta  \in \RR^{\dm\times \dm}$ such that $W = \Delta  F^\top + F \Delta ^\top$ and 
\begin{equation}\label{eq: bc-soc.verification}
\begin{aligned}
 D^2 h_X[W ,W] + 2\tr(W X^\dagger W \nabla h(X)) & \geq 0 \\
  \iff \underbrace{D^2 h_X[W ,W] + 2\tr(\nabla h(X) \Delta  \Delta ^\top )}_{T_1}  + 2\underbrace{\tr(\nabla h(X)  (W X^\dagger W - \Delta \Delta ^\top))}_{T_2} & \geq 0. 
\end{aligned}
\end{equation} 
For any $\Delta$ with the required property, we have $T_1 \geq 0 $ from \eqref{eq:bc.ssv-soc}.   
Hence  it remains to find $\Delta $ such that  (i) $W = \Delta  F^\top + F \Delta ^\top$, and (ii) $T_2\geq 0$. 
Both these claims follow from Lemma~\ref{lem: constructionDelta} with $S=\nabla h(X)$, completing the proof.
(In fact for the value of $\Delta$ constructed in Lemma~\ref{lem: constructionDelta}, we have $T_2=0$.) 
\end{proof}

\begin{remark}
    In the proof above, the square shape of  $F$ is used only in the paragraph containing \eqref{eq: dss.proof.nabla_psd}, to argue that $\nabla h(X)\succeq 0$. 
    For rectangular $F$, that is, $F\in \real^{\dm \times k}$ with $k\leq \dm$, the equivalence between the 2NPs will continue to hold provided that $\rank(F)<k$.
    The same argument regarding the existence of $v \ne 0$ in the null space of $F$ would apply, with its subsequent definition of the matrix $\Delta$ from which the conclusion in \eqref{eq: dss.proof.nabla_psd} follows.
\end{remark}

\begin{exm}\label{exm: bc.ssv}
Consider the Burer-Monteiro approach which sets the factor $F$ of size $d\times k$ with $k\leq d$ as shown in \eqref{eq: bc.ssv.bm}. 
We show here that there is a convex function $h$ such that for any $k< d$, there is a 2NP $F$ of \eqref{eq: bc.ssv.bm} such $FF^\top$ is not a 1P of \eqref{eq:bc}, or equivalently, due to convexity of $h$, not a global optimal point of \eqref{eq:bc}. 
In this example, the 1P of \eqref{eq:bc} is unique and has rank $1$.
Hence, our insistence on a square-shaped $F$ in \eqref{eq:bc.dss} is necessary for the equivalence between 2NPs. 
This example is inspired by the example \cite[Section~3]{bhojanapalli2018smoothed}, which deals only with the case of $k=d-1$.

For any $d\geq 3$, define the following matrices $A_i$, $i=1,2,\dotsc,\dm+1$ and a vector $b \in \RR^{\dm+1}$:

\begin{equation*}
    \begin{aligned}
        A_i  & = e_i e_\dm^\top + e_\dm e_i^\top,\quad i = 1,\dots, \dm-1; \quad
        A_\dm  = \epsilon \begin{bmatrix}
            I_{\dm-1} & 0 \\ 
            0 & 1
        \end{bmatrix}; \quad 
        A_{\dm+1} = \epsilon \begin{bmatrix}
            2I_{\dm-1} & 0 \\ 
            0 & 1
        \end{bmatrix},\\
    b& = \epsilon \frac{5(\dm-1)}{3} \begin{bmatrix}
        0_{\dm-1} \\ 
        1\\
       1
    \end{bmatrix}.
     \end{aligned}
\end{equation*}
Here, the vectors $e_i$, $i=1,\dots, \dm$ are standard basis vectors in $\RR^\dm$ and the number $\epsilon = \frac{1}{2}\sqrt{\frac{6}{k}}>0$. 

Consider the following optimization problem: 
\begin{equation}\label{eq: fOriginalExm2}
    \min_{X\succeq 0}\; h(X):\,= \frac{1}{2} \sum_{i=1}^{\dm+1} (\inprod{A_i}{X}-b_i)^2 =\frac{1}{2} \twonorm{\mathcal{A}(X)-b}^2.
\end{equation}
The above admits a unique minimizer $X_\star = \begin{bmatrix}
    0 & 0\\
    0 & \frac{5(\dm -1)}{3}
\end{bmatrix}$. 
To verify this claim, note that $h(X_\star)=0$ and $X_\star \succeq 0$, so $X_\star$ is a global minimizer. To see uniqueness of $X_\star$, note that $\inprod{A_\dm }{X}=b_\dm $ and $\inprod{A_{\dm+1}}{X} = b_{\dm +1}$ imply that the sum of the upper $\dm-1$ diagonal of $X$ is $0$ and the last diagonal entry is $\frac{5(\dm-1)}{3}$.  Combining these facts with $X \succeq 0$ shows that $X=X_\star$ is the only matrix satisfying $h(X)=0$. 
Note that $X_\star$ is rank $1$.

Now for each $k$, we claim that $F_k = \eta_k\begin{bmatrix}
    I_k \\ 0_{(n-k)\times {k}}
\end{bmatrix}$  for some $\eta_k$ is a 2NP of \eqref{eq: bc.ssv.bm} with the function $h$ defined in \eqref{eq: fOriginalExm2}. It is not globally optimal as $g(F_k) = h(F_kF_k^\top) >0$. The first-order conditions of 
\eqref{eq: bc.ssv.bm} at $F\in \real^{\dm\times k}$ is 
\[
\nabla g(F)  = \left(\Amap^* (\Amap(FF^\top)-b)\right)F = 0.
\]
The second-order condition is that for 
for any $\Delta \in \real^{\dm \times k}$, we have
\begin{align*}
    D^2 g_F[\Delta, \Delta ] & = 2\inprod{\Delta \Delta ^\top}{\Amap^*
    (\Amap(FF^\top)-b)} + \twonorm{\Amap(F\Delta ^\top + \Delta F ^\top)}^2 \geq 0.
\end{align*}
Simple calculation shows that 
\begin{equation*}
\begin{aligned} 
    \Amap(F_kF_k^\top) - b & = (0,\dots, 0, \epsilon \left(\eta_k^2 k - \frac{5(\dm-1)}{3}\right) , \epsilon\left( 2\eta_k^2k-\frac{5(\dm-1)}{3}\right))^\top \\
\implies  \Amap^* \left(\Amap (F_kF_k^\top) - b\right) & =
\begin{bmatrix}  
\epsilon^2\left(5 k\eta_k^2 - 5(\dm-1)\right) I_{\dm-1}  & 0 \\
0 & \epsilon^2\left(3 k\eta_k^2 - \frac{10}{3}(\dm-1)\right) 
\end{bmatrix}.
\end{aligned} 
\end{equation*}
By choosing $\eta_k^2 = \frac{\dm-1}{k}$, we see that 
\begin{equation} \label{eq:js6}
    \Amap^* \left(\Amap (F_kF_k^\top) - b\right) = -\frac{\epsilon^2(\dm-1)}{3} e_\dm e_\dm^\top,
\end{equation} 
so $\nabla g(F_k) = 0$ and the first-order conditions are satisfied.

Next, we verify the second-order condition. Given any $\Delta \in \real^{\dm \times k}$, we write 
\[
\Delta = \begin{bmatrix}
        u_1^\top \\ \vdots \\ 
        u_\dm^\top
    \end{bmatrix},
\]
where each $u_i \in \real^k$, $i=1,2,\dotsc,\dm$. Then we have 
\begin{equation}
\Amap(F_k \Delta ^\top + \Delta F_k^\top ) = (2\eta_k u_\dm,0,\dots,q_1, q_2)^\top 
\end{equation}
for some $q_1,q_2 \in \RR$, so for the second term in the definition of $D^2 g_F[\Delta,\Delta]$, we have $\twonorm{\Amap(F\Delta ^\top + \Delta F ^\top)}^2 = 4 \eta_k^2 \twonorm{u_\dm}^2 +q_1^2 + q_2^2$. 
For the first term in this definition, we have from \eqref{eq:js6} that
\[
2 \inprod{\Delta \Delta ^\top}{\Amap^*    (\Amap(F_k F_k^\top)-b)} =
-\frac{2\epsilon^2(\dm-1)}{3} \inprod{\Delta \Delta ^\top}{e_d e_d^\top} =-\frac{2\epsilon^2(\dm-1)}{3}  \| u_d \|_2^2.
\]
By combining these results, and using  the value $\eta_k = \frac{d-1}{k}$ from the first-order conditions, we find that 
\begin{align*}
 D^2 g_F[\Delta, \Delta ] & = -\frac{2(d-1)}{3}\epsilon^2 \twonorm{u_\dm}^2 +4 \eta_k^2 \twonorm{u_\dm}^2 +q_1^2 + q_2^2 \\
& \geq 
 \left(4\eta_k^2 - \frac{2(\dm-1)}{3}\epsilon^2\right)\twonorm{u_\dm}^2 \\
 &=
 \left( 4 \frac{d-1}{k} - \frac{2(\dm-1)}{3}\epsilon^2\right)\twonorm{u_\dm}^2.
\end{align*}
Using the choice $\epsilon = \frac{1}{2}\sqrt{\frac{6}{k}}$, we have $D^2 g_F[\Delta,\Delta]\geq 0$, for any choice of $\Delta$.
\end{exm}

\subsection{Relationship between 2NPs of \eqref{eq:bc} and \eqref{eq:bc.ssv-sym}} 
\label{sec:2.2}

We define the second-order necessary conditions of \eqref{eq:bc.ssv-sym}. Recall  the symmetric product of two square matrices $A$ and $B$ is
$A\circ B = \frac{1}{2}\left (AB^\top + BA^\top\right)$.

\begin{definition}
    [First order and second order conditions of \eqref{eq:bc.ssv-sym}]\label{def:bcssv-sym}
We say that $(x,F)$ satisfies {\em first-order} conditions (1C) for \eqref{eq:bc.ssv-sym} if $\nabla g(F) =0 $ or equivalently
\begin{equation} \label{eq: bc.ssv-sym-foc}\tag{DSS-Sym-1C}
\nabla g(F) = 2\nabla h(F^2)\circ F =0.
\end{equation}
We say that (x,F) satisfies {\em second-order necessary} conditions (2NC) for \eqref{eq:bc.ssv-sym} if
it satisfies first-order conditions and in addition 
$D^2 g_F[\Delta ,\Delta ] \geq 0$ for any $\Delta  \in \symMat^\dm$, or equivalently,
\begin{equation}\label{eq:bc.ssv-sym-soc}\tag{DSS-Sym-2NC}
D^2 g_F[\Delta ,\Delta ] = D^2 h_X[W,W] + 2\tr(\nabla h(F^2)\Delta  ^2) \geq 0,
\end{equation}
where $W = 2 F\circ \Delta $ and $\Delta  \in \symMat^{\dm}$ is arbitrary. 
\end{definition}

Our next theorem shows the relationship between the 2NPs of \eqref{eq:bc} and \eqref{eq:bc.ssv-sym}.

\begin{theorem}
    \label{th:bc-ssv.sym}
If a matrix $X$ satisfies 2NC for \eqref{eq:bc}, then any $F \in
\symMat^{\dm}$ such that $F^2 =X$ satisfies 2NC for
\eqref{eq:bc.ssv-sym}. Conversely, if a matrix $F$ satisfies 2NC for
\eqref{eq:bc.ssv-sym}, then the matrix $X=F^2$ satisfies 1C for
\eqref{eq:bc}. If, in addition, $F$ satisfies the eigenvalue condition
\eqref{eq:ec}, then $X = F^2$ also satisfies 2NC for \eqref{eq:bc}.
\end{theorem}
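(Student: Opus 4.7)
The plan is to handle the three implications of Theorem~\ref{th:bc-ssv.sym} in turn, leveraging an eigendecomposition $F = U\Sigma U^\top$ with $\Sigma = \diag(\sigma_1,\dots,\sigma_\dm)$ and $N := U^\top \nabla h(X) U$ throughout the reverse direction.

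\textbf{Forward direction.} For the implication 2NC of \eqref{eq:bc} $\Rightarrow$ 2NC of \eqref{eq:bc.ssv-sym}, I would mirror the structure of the first half of the proof of Theorem~\ref{th:bc}, adapted for symmetric $F$. The first-order conclusion uses $\nabla h(X) X = 0$ to obtain
\[
 0 = \nabla h(X) X \nabla h(X) = \nabla h(X) F \cdot F \nabla h(X) = (\nabla h(X) F)(\nabla h(X) F)^\top,
\]
where symmetry of both $F$ and $\nabla h(X)$ identifies $F \nabla h(X) = (\nabla h(X) F)^\top$; this yields $\nabla h(X) F = 0$ and hence $\nabla h(X)\circ F = 0$. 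For the second-order part, I would split the left-hand side of \eqref{eq:bc.ssv-sym-soc} as $T_1 + T_2$ with $T_1 = D^2 h_X[W,W] + 2\tr(WX^\dagger W \nabla h(X))$ and $T_2 = 2\tr(\nabla h(X)(\Delta^2 - WX^\dagger W))$. Since $F$ and $F^2$ share a null space, $FV_X = 0$, so $V_X^\top W V_X = 0$ and $T_1 \ge 0$ by \eqref{eq:bc-soc}; since $\Delta$ is symmetric, $\Delta\Delta^\top = \Delta^2$, and $T_2 \ge 0$ follows from Lemma~\ref{lem: nonnegativeT2} with $S = \nabla h(X)$.

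\textbf{2NC of \eqref{eq:bc.ssv-sym} $\Rightarrow$ 1C of \eqref{eq:bc}.} This is the main technical step and the expected bottleneck. Partition indices into $I_+, I_-, I_0$ by the sign of $\sigma_i$. The FOC $\Sigma N + N\Sigma = 0$ forces $N_{ij}(\sigma_i + \sigma_j) = 0$, so the only possibly nonzero entries of $N$ are those in the $I_0 \times I_0$ block and at positions $(i,j)$ with $\sigma_i = -\sigma_j \ne 0$. I would first establish $N_{00} \succeq 0$ by applying \eqref{eq:bc.ssv-sym-soc} to $\Delta = vv^\top$ with $v$ supported on $I_0$: this makes $W = 0$ and reduces the inequality to $v^\top N v \ge 0$. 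To kill the cross-block entries, fix a pair $(i,j)$ with $\sigma_i = -\sigma_j \ne 0$ and set $A := e_i e_j^\top + e_j e_i^\top$; then $W(A) = 0$ and $\tr(NA^2) = N_{ii} + N_{jj} = 0$ by the FOC. Applying \eqref{eq:bc.ssv-sym-soc} to $\Delta = tA + B$ for an arbitrary symmetric $B$ and using $\tr(NAB) = \tr(NBA)$ (valid for symmetric $N, A, B$) yields
\[
 D^2 h_X[W(B), W(B)] + 4t\,\tr(NAB) + 2\tr(NB^2) \ge 0 \quad\text{for all } t \in \R.
\]
The coefficient of $t$ must therefore vanish, giving $\tr(NAB) = 0$ for every symmetric $B$. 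Choosing $B = e_j e_j^\top$ produces $AB = e_i e_j^\top$ and hence $N_{ij} = 0$. Combined with the FOC, this leaves $N$ block-diagonal with only $N_{00} \succeq 0$ nonzero, so $\nabla h(X) \succeq 0$ and $N\Sigma = 0$, i.e., $\nabla h(X) X = 0$. The difficulty here is precisely that a direct choice of $\Delta$ does not isolate $N_{ij}$ because $D^2 h_X[W,W]$ couples it to the $h$-dependent curvature; the linear-in-$t$ scaling is what cleanly decouples the two.

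\textbf{Reverse direction under \eqref{eq:ec}.} For the final implication, I would invoke the symmetric refinement of Lemma~\ref{lem: constructionDelta}. Given any symmetric $W$ with $V_X^\top W V_X = 0$, the eigenvalue condition \eqref{eq:ec} allows the lemma to produce a \emph{symmetric} $\Delta$ satisfying $W = F\Delta + \Delta F$ and $\tr(\nabla h(X)(WX^\dagger W - \Delta^2)) = 0$; the required hypothesis $\nabla h(X) X = 0$ is precisely the 1C just established. Substituting this $\Delta$ into \eqref{eq:bc.ssv-sym-soc} recovers \eqref{eq:bc-soc} at the prescribed $W$, completing the proof. The necessity of \eqref{eq:ec} for this symmetric construction is exactly what forces the extra assumption on this implication and is illustrated by Example~\ref{exm: bc.ssv-sym}.
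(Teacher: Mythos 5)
Your proposal is correct and follows essentially the same route as the paper's proof: diagonalize $F$, use the anticommutator first-order condition to localize the support of $U^\top \nabla h(X)U$, test the second-order condition along directions $\Delta$ with $2F\circ\Delta = 0$ to obtain positive semidefiniteness of the kernel block and vanishing of the entries paired by $\sigma_i = -\sigma_j$, and invoke Lemmas~\ref{lem: nonnegativeT2} and \ref{lem: constructionDelta} for the curvature terms. The only cosmetic difference is in how the anti-paired entries are killed --- you scale the off-diagonal pattern by $t$ against an arbitrary symmetric $B$ and extract the vanishing linear coefficient, whereas the paper puts $\epsilon$ on the diagonal of a single fixed $\Delta$ and lets $\epsilon$ range over $\RR$ --- but both are the same polarization argument.
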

\begin{proof}
We show first that if $X$ satisfies 2NC for \eqref{eq:bc}, then any
$F$ with $F^2= X$ satisfies 2NC for \eqref{eq:bc.ssv-sym}. We then
prove the reverse direction.

\smallskip

\noindent\underline{\textbf{\eqref{eq: bc-foc}, \eqref{eq:bc-soc}  yields \eqref{eq: bc.ssv-sym-foc}, \eqref{eq:bc.ssv-sym-soc}}}. 
From the complementarity condition in \eqref{eq: bc-foc}, $\nabla h(X) X = 0$, we see that the column space of $X$ lies in the null space of  the symmetric matrix $\nabla h(X)$. Since $F$ and $X$ share the same column space, we see that $\nabla h(X) F=0$, so $\nabla h(X) \circ F =0$ and \eqref{eq: bc.ssv-sym-foc} holds.

The condition  \eqref{eq:bc.ssv-sym-soc} can be proved in the same way as for the first implication in Theorem~\ref{th:bc}. 

\smallskip

\noindent\underline{\textbf{\eqref{eq: bc.ssv-sym-foc}, \eqref{eq:bc.ssv-sym-soc} yields \eqref{eq: bc-foc}}}.
We now prove that if $F$ is a 2NP of \eqref{eq:bc.ssv-sym}, then $X = F^2$ is a 1P of \eqref{def:bc}.
First, the matrix $X=F^2$ is clearly PSD, hence feasible for \eqref{eq:bc}. 
We show later that the condition $\nabla h(X)X=0$ holds.
Next, we tackle the main technical difficulty, which is to show that $\nabla h(X)\succeq 0$.

Let $r$ be the rank of $F$ and define the eigenvalue decomposition of the symmetric matrix $F$ to be $F = U  \Sigma U^T$, where
\[
\Sigma = \diag(\sigma_1,\dots,\sigma_r,\sigma_{r+1},\dots, \sigma_\dm)  =
\begin{bmatrix}
    \hat{\Sigma} & 0 \\ 0 & 0
\end{bmatrix}, \;\; \mbox{where} \;
\hat{\Sigma} = \diag(\sigma_1,\dots,\sigma_r).
\]
where $\sigma_1, \dotsc, \sigma_r$ are the nonzero eigenvalues of $F$ and $\sigma_{r+1} = \dotsc = \sigma_d = 0$. 
(Note that $U$ is orthogonal, by the properties of the eigenvalue decomposition.) 
Since $X = F^2$, we have $X = U \Sigma^2 U^\top$

Let us introduce the following transformed versions of $\nabla h(X) \circ F$ and $\nabla h(X)$:
\begin{equation}
H := U^\top(\nabla h(X) \circ F)U  \quad \text{and}\quad  
T := U^\top \nabla h(X) U,
\end{equation}
and note that $H=0$ by \eqref{eq: bc.ssv-sym-foc}.
Then we have 
\[
H = \frac12 \left( (U^\top \nabla h(X) U) (U^\top F U) +  (U^\top F U) (U^\top \nabla h(X) U) \right) = \frac12 ( T \Sigma + \Sigma T),
\]
so that 
\begin{equation}\label{eq: nabla_f_F_nablaf}
0 = H_{ij}= \frac{1}{2} (\sigma_i +\sigma_j) T_{ij}\quad \text{for $i,j=1,2,\dotsc,\dm$.}
\end{equation}
It follows from this relation that $T_{ij}$ can be nonzero under only when $\sigma_i +\sigma_j=0$, which happens only when (1) $i$ and $j$ are both in the range $r+1,\dotsc,\dm$ (that is, in the lower $(d-r)\times (d-r)$ block of $T$) or (2) $i$ and $j$ are both in the range $1,2,\dotsc,r$ and   $\sigma_i+\sigma_j=0$. Hence, to show $\nabla h(X) \succeq 0$, which is equivalent to $T \succeq 0$, it suffices to show that (i) the lower right $(\dm-r) \times (\dm-r)$ block of $T$ (that is, $T_{(r+1):d,(r+1):d}$) is positive semidefinite and (ii) $T_{ij}=0$ for those  $i,j  \in \{1,2,\dotsc,r\}$ for which $\sigma_i+\sigma_j=0$. 
Note that if the eigenvalue condition \eqref{eq:ec} is satisfied, then (ii) is automatically implied by \eqref{eq: nabla_f_F_nablaf}.

\smallskip

\noindent\textbf{Item (i)}
In the condition \eqref{eq:bc.ssv-sym-soc}, define $\Delta $ to be 
\begin{equation}
U^\top \Delta U 
=\begin{bmatrix}
    0 & 0 \\
    0 & \hat{T}
\end{bmatrix},
\end{equation}
where $\hat{T} \in \symMat^{d-r}$ is arbitrary. Note that for such $\Delta $ we have for $W$ defined in Definition~\ref{def:bcssv-sym} that 
\begin{align*}
U^\top W U & = 
\left( (U^\top F U) (U^\top \Delta U) + (U^\top \Delta U) (U^\top F U) \right) \\
& =
\begin{bmatrix}
    \hat{\Sigma} & 0 \\
    0 & 0
\end{bmatrix}
\begin{bmatrix}
    0 & 0 \\
    0 & \hat{T}
\end{bmatrix} +
\begin{bmatrix}
    0 & 0 \\
    0 & \hat{T}
\end{bmatrix}
\begin{bmatrix}
    \hat{\Sigma} & 0 \\
    0 & 0
\end{bmatrix} = 0,
\end{align*}
so that $W=0$. 
Hence, the condition \eqref{eq:bc.ssv-sym-soc} reduces to 
\[
\inprod{T_{(r+1):d,(r+1):d}}{\hat{T}^2}\geq 0.
\]
Since $\hat{T}$ is arbitrary, we conclude that $T_{(r+1):d,(r+1):d}$ is positive semidefinite, as required.

\smallskip 

\noindent\textbf{Item (ii)} For each pair of indices $i,j \in \{1,2,\dotsc,r\}$ with   $\sigma_i+\sigma_j=0$, we define the matrix $\Delta$ in \eqref{eq:bc.ssv-sym-soc} in such a way that  $[U^\top \Delta U]_{ii} =[U^\top \Delta U]_{jj}  = \epsilon$ (for any $\epsilon$, positive or negative) and $[U^\top \Delta U] _{ij}  = [U^\top \Delta U] _{ji} =1$, with all other elements of $U^\top \Delta U$ being zero.
A simple calculation (using again that $\sigma_i+\sigma_j=0$) shows again that for the matrix $W$ of Definition~\ref{def:bcssv-sym}, we have $U^T W U =  2 U^\top (F\circ \Delta)  U$ has only two nonzeros: $2\epsilon \sigma_i$ at the $(i,i)$ location and $2 \epsilon \sigma_j$ at the $(j,j)$ location. 
We define $\bar{W} = \epsilon^{-1} W$ and note that $\bar{W}$ is independent of $\epsilon$.
We also have 
\[
\inprod{\nabla h(X)}{\Delta ^2} = \tr \left( (U^T \nabla h(X) U)^\top (U^T \Delta U)^2  \right) = \tr (TM),
\]
where $M$ is a matrix with $M_{ii}=M_{jj} = 2+\epsilon^2$ and $M_{ij}=M_{ji}=2 \epsilon$, with all other elements being zero. 
Using $T_{ii}=T_{jj}=0$ for $i,j \in \{1,2,\dotsc,r\}$, we have
$\inprod{\nabla h(X)}{\Delta ^2} = \tr (TM) = 4 \epsilon T_{ij}$.
From \eqref{eq:bc.ssv-sym-soc}, we now have
\[
0 \le D^2 h_X [W,W] + 2 \tr(\nabla h(X) \Delta^2) = \epsilon^2 D^2 h_X [\bar{W},\bar{W}]  + 8 \epsilon T_{ij}.
\]
By arbitrariness of $\epsilon$, this condition implies that $T_{ij}=0$.

For the remaining first order condition, we need to show that $\nabla h(X) X =0$.
From the structure of $T = U^\top \nabla h(X) U$ proved above, and from $X = U^\top \Sigma^2 U$, we have
\[
U^\top \nabla h(X) X U = (U^\top \nabla h(X) U) (U^\top X U) =
\begin{bmatrix}
    0 & 0 \\ 0 & T_{(r+1):d,(r+1):d} 
\end{bmatrix}
\begin{bmatrix}
    \hat{\Sigma}^2 & 0 \\ 0 & 0 
\end{bmatrix} =0,
\]
proving that $\nabla h(X) X =0$, thus completing the proof that \eqref{eq: bc-foc} are satisfied.

(We summarize the  argument above showing $\nabla h(X)\succeq 0$ as Lemma~\ref{lem: sym_f_psd} below, replacing $\nabla h(X)$ by $S$. 
We will need this argument again in Section~\ref{sec: ssv}.)

\smallskip

\noindent\underline{\textbf{\eqref{eq: bc.ssv-sym-foc}, \eqref{eq:bc.ssv-sym-soc} yields \eqref{eq: bc-foc}, \eqref{eq:bc-soc} under the}} \\ \underline{\textbf{eigenvalue condition \eqref{eq:ec}}}. 
To show that $X = F^2$ is a 2NP of \eqref{eq:bc} under the condition that no pair of nonzero eigenvalues of $F$ sums to zero, we can follow the steps in the proof of Theorem~\ref{th:bc} to show that  \eqref{eq:bc-soc} holds. 
The only difference is that we also need the $\Delta$ to be symmetric, but this requirement is taken care of by Lemma~\ref{lem: constructionDelta}. 
\end{proof}

\begin{lemma}\label{lem: sym_f_psd}
    Let $F\in \symMat^\dm$ and $S\in \symMat^{\dm} $ satisfy that $S\circ F =0$. 
 We have $S\succeq 0$ if one of the following conditions holds,
    \begin{itemize}
        \item for any $\Delta \in \symMat^\dm$ such that  $0=2F\circ \Delta$, we have $2\tr(S\Delta^2) \geq 0 $ and the eigenvalue condition \eqref{eq:ec} holds for $F$;
\item  there is a quadratic form $\mathcal{Q}:\symMat^\dm \times \symMat^{\dm}\rightarrow\real$ such that 
        $\mathcal{Q}[W,W] + 2\tr(S\Delta^2) \geq 0 $ for any $W$ of the form $W=2F\circ \Delta$ where $\Delta \in \symMat^\dm$ is arbitrary.
    \end{itemize}
\end{lemma}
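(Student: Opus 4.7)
The plan is to diagonalize $F$ and work entry-wise in its eigenbasis, mirroring the structure of the $\nabla h(X) \succeq 0$ argument in the proof of Theorem~\ref{th:bc-ssv.sym}. Write the eigenvalue decomposition $F = U\Sigma U^\top$ with $\Sigma = \diag(\sigma_1, \ldots, \sigma_d)$, where $\sigma_1, \ldots, \sigma_r$ are the nonzero eigenvalues. Set $T = U^\top S U$, so $S \succeq 0$ iff $T \succeq 0$. The condition $S \circ F = 0$, conjugated by $U$, becomes $\frac{1}{2}(\sigma_i + \sigma_j) T_{ij} = 0$ for all $i, j$, so $T_{ij}$ can be nonzero only where $\sigma_i + \sigma_j = 0$. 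Nonzero entries therefore live in two regions: (a) the bottom-right $(d-r) \times (d-r)$ block, and (b) cross-entries with $i, j \leq r$ and $\sigma_i = -\sigma_j \neq 0$. Under the eigenvalue condition \eqref{eq:ec}, region (b) is vacuous.

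The first step in both cases is to handle region (a). For arbitrary symmetric $\hat{T} \in \symMat^{d-r}$, pick $\Delta$ whose transformed version $U^\top \Delta U$ is zero except for $\hat{T}$ in the bottom-right block. By the block structure of $\Sigma$, the matrix $W = 2F \circ \Delta$ vanishes. Under the first hypothesis this immediately gives $2 \tr(S \Delta^2) \geq 0$; under the second, $W = 0$ makes $\mathcal{Q}[W, W] = 0$, with the same conclusion. A short computation reduces this inequality to $\tr(T_{(r+1):d,(r+1):d}\, \hat{T}^2) \geq 0$, and arbitrariness of $\hat{T}$ gives positive semidefiniteness of that block. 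Under the first hypothesis this completes the proof.

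Under the second hypothesis, it remains to handle region (b). For each pair $(i, j)$ with $i, j \leq r$ and $\sigma_i + \sigma_j = 0$, construct $\Delta$ whose transformed version has $\epsilon$ at diagonal positions $(i,i)$ and $(j,j)$ and $1$ at off-diagonal positions $(i,j)$ and $(j,i)$, with all other entries zero. Because $\sigma_i + \sigma_j = 0$, the off-diagonal contributions to $W = 2F \circ \Delta$ cancel, and only the two diagonal entries of magnitude $2\epsilon \sigma_i$ survive, so $W = \epsilon \bar{W}$ with $\bar{W}$ independent of $\epsilon$. Likewise, $2\tr(S\Delta^2) = 8\epsilon T_{ij} + O(\epsilon^2)$ using $T_{ii} = T_{jj} = 0$ (from the region-(a) analysis, or directly from $\sigma_i + \sigma_i = 2\sigma_i \neq 0$). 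The hypothesis then gives $\epsilon^2 \mathcal{Q}[\bar{W}, \bar{W}] + 8\epsilon T_{ij} + O(\epsilon^2) \geq 0$ for every real $\epsilon$, and since the dominant term as $\epsilon \to 0$ is linear in $\epsilon$, we conclude $T_{ij} = 0$.

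The main obstacle is the $\epsilon$-scaling trick in region (b): one must engineer $\Delta$ so that $W$ is linear in $\epsilon$ while $\tr(S\Delta^2)$ contains a nonzero linear-in-$\epsilon$ contribution from $T_{ij}$. The cancellation $\sigma_i + \sigma_j = 0$ is precisely what enables the off-diagonal part of $\Delta$ to contribute to $\Delta^2$ without contributing to $W$. Once this construction is in hand, sign-arbitrariness of $\epsilon$ kills $T_{ij}$, and the proof is complete.
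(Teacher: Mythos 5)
Your proof is correct and follows essentially the same route as the paper's: the paper proves this lemma by the argument embedded in the proof of Theorem~\ref{th:bc-ssv.sym} (diagonalize $F$, observe $(\sigma_i+\sigma_j)T_{ij}=0$, establish positive semidefiniteness of the lower-right block via $\Delta$ supported there so that $W=0$, and kill the exceptional entries with $\sigma_i=-\sigma_j\neq 0$ via the same $\epsilon$-scaled $\Delta$ and sign-arbitrariness of $\epsilon$). Your handling of which hypothesis covers which region --- the eigenvalue condition making region (b) vacuous under the first bullet, and the quadratic-form hypothesis enabling the $\epsilon$-trick under the second --- matches the paper exactly.
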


The following example shows that even if $F$ is symmetric and a 2NP of \eqref{eq:ssv-sym}, $X = F^2$ is not necessarily a 2NP of \eqref{eq:bc} without the eigenvalue condition \eqref{eq:ec}.

\begin{exm}
    \label{exm: bc.ssv-sym}

Let $d=2$, and define $h(X) := \inprod{A}{X^2} + \inprod{B}{X}$. We will show that for some symmetric matrices $A$ and $B$, $X =I$ satisfies \eqref{eq: bc-foc} but not  \eqref{eq:bc-soc}. However, there is some symmetric matrix $F$ with $F^2 = I$ that satisfies \eqref{eq: bc.ssv-sym-foc} and \eqref{eq:bc.ssv-sym-soc} of
\eqref{eq:bc.ssv-sym} for $g(F)= h(F^2)$. Hence,  
Thus, a matrix satisfying 2N conditions for  \eqref{eq:bc.ssv-sym} does not necessarily yield a matrix satisfying 2N conditions for  \eqref{eq:bc}.

By writing down the first and second-order conditions \eqref{eq: bc-foc} and  \eqref{eq:bc-soc} for this $h$, we see that $X=I$ is a 1P but not a 2NP if $\nabla h(I) =2A + B =0$ and $A \not\succeq 0$. 

Then we claim that $F = \begin{bmatrix} 1 & 0 \\ 0 & -1 \end{bmatrix}$ is a 2NP for \eqref{eq:bc.ssv-sym} for some $A$ and $B$ satisfying $2A + B =0$ and $A \not\succeq 0$. 

The matrix $F$ defined as above is indeed a 1P of \eqref{eq:bc.ssv-sym} if $2A + B =0$, since the first order condition \eqref{eq: bc.ssv-sym-foc} reads as $\nabla h(I) \circ F =0$ and $\nabla h(I) = 2A +B =0$. 
For second-order conditions, \eqref{eq:bc.ssv-sym-soc} becomes
\begin{equation}\label{eq: example-soc}
\inprod{A}{(F\circ \Delta )(F\circ \Delta )} \geq 0.
\end{equation}
Let $\Delta  = \begin{bmatrix}
    y_1 & y_3 \\ 
    y_3 & y_2
\end{bmatrix}$, we have $F\circ \Delta  = \begin{bmatrix}
    y_1 & 0 \\ 0 & -y_2
\end{bmatrix}$ and $(F\circ \Delta )(F\circ \Delta ) = \begin{bmatrix}
    y_1^2 & 0 \\ 0 & y_2^2
\end{bmatrix}
$. Denoting $A = \begin{bmatrix}
    a_1 & a_3 \\ a_3 & a_2
\end{bmatrix}$, the condition \eqref{eq: example-soc} is 
equivalent to $a_1 y_1^2 + a_2 y_2^2 \ge 0$, for any $y_1$ and $y_2$.
Thus we have $a_1,a_2\succeq 0$, but we have no control over $a_3$. 
If we take
$A=\begin{bmatrix}
    \frac{1}{2} & -1 \\ 
    -1 & \frac{1}{2}
\end{bmatrix}$ and $B = \begin{bmatrix}
    -1 & 2 \\
    2 & -1
\end{bmatrix}$ then for \eqref{eq:bc} with the corresponding $h$, $X=I$ is a 1P but not a 2NP, while for \eqref{eq:bc.ssv-sym} with $g(F)=h(F^2)$,  $F = \begin{bmatrix} 1 & 0 \\ 0 & -1 \end{bmatrix}$ is a 2NP.
\end{exm} 

\subsection{An Application: Overparametrization in Nuclear Norm Minimization}
\label{sec:2.3}

The  nuclear norm (sum of singular values) is often used as a regularizer in matrix optimization problems to encourage low rank, by adding a multiple of it to the original smooth objective function $h:\RR^{\dm_1 \times \dm _2} \rightarrow \RR$. 
Consider
\begin{equation}\label{eq: nnm} \tag{NNM}
    \min_{X \in \RR^{\dm_1 \times \dm _2} } \, h(X) + \lambda \nucnorm{X}.
\end{equation}
where $\lambda>0$ is a tuning parameter. 
A well-known property of the nuclear norm is that for any $X\in \mathbb{R}^{\dm_1\times \dm_2}$, we have
\begin{equation}\label{eq: nucnorm_SDP}
    \nucnorm{X} = \min_{W_1 \in \symMat^{d_1},W_2\in \symMat^{d_2}} \, \frac{1}{2}\left(\tr(W_1)+\tr(W_2)\right) \text{ s.t. }\begin{bmatrix}
        W_1 & X \\ 
        X^\top & W_2
    \end{bmatrix}\succeq 0;
\end{equation}
see \cite[Lemma 3]{ding2021simplicity}. 
Using \eqref{eq: nucnorm_SDP}, we know that \eqref{eq: nnm} is equivalent (in terms of global minimizers)\footnote{Precisely, the equivalence is that (i) if $X_\star$ is a global minimizer of \eqref{eq: nnm}, then \eqref{eq: PSDreg} admits a global minimizer $\bar{X}_\star$ whose off-diagonal block is $X_\star$, and (ii) if $\bar{X}_\star$ is a global minimizer of \eqref{eq: PSDreg}, then its off-diagonal block is a global minimizer of \eqref{eq: nnm}. 
\label{ft: nnm_correspondence}} to the following formulation:
\begin{equation}\label{eq: PSDreg} \tag{NNM-BC}
\min_{\bar{X}} \, \bar{h}(\bar{X}) := h(X) + \tfrac{\lambda}{2} \tr(\bar{X})\;\; 
    \mbox{s.t.} \;\; \bar{X} = \begin{bmatrix}
         W_1 & X \\
         X^\top & W_2
     \end{bmatrix}\succeq 0,
\end{equation}
which has the form of \eqref{eq:bc}. 
In fact, we show in \Cref{app:D} (Lemma \ref{lem: eq1Pnnm}) that there is a direct correspondence between {\em first-order points} for \eqref{eq: nnm} and \eqref{eq: PSDreg} as well.

Consider now the direct substitution $\bar{X} = FF^\top$ for 
\[
F :=  \begin{bmatrix}
    Y \\ 
    Z
\end{bmatrix} \in \RR^{d\times d}, \;\; \mbox{\rm where $\dm = \dm_1 + \dm_2$, $Y \in \RR^{\dm_1 \times d}$, and $Z \in \RR^{\dm_2 \times d}$.}
\]
Then we can reformulate \eqref{eq: PSDreg}  in the manner of \eqref{eq:bc.dss} as
\begin{equation}\label{eq: NNMregDS} \tag{NNM-DSS}
     \min_{Y\in \RR^{\dm_1\times \dm}, \; Z\in \RR^{\dm_2 \times \dm}} \,  h(YZ^\top ) + \tfrac{\lambda}{2}\big(\fronorm{Y}^2+\fronorm{Z}^2\big).
\end{equation}
Theorem~\ref{th:bc} can be applied to show a direct correspondence between 2NP for \eqref{eq: PSDreg} and \eqref{eq: NNMregDS}. 
When $h$ is convex, a 2NP (indeed, even a 1P) of \eqref{eq: PSDreg} is a global optimum.
Thus, for convex $h$, our result shows that we can find the global optimum of  \eqref{eq: PSDreg} (as well as the global optimum of \eqref{eq: nnm}) by finding a 2NP of \eqref{eq: NNMregDS}. 
For {\em general} smooth $h$, from 
Theorem~\ref{th:bc} and Lemma \ref{lem: eq1Pnnm}, we can find a 1P of \eqref{eq: nnm} 
by finding a 2NP of \eqref{eq: NNMregDS}.
This claim follows from  (i) 1NPs of  \eqref{eq: PSDreg} and \eqref{eq: nnm} are equivalent, as mentioned above; and (ii) 2NPs of  \eqref{eq: PSDreg}  and \eqref{eq: NNMregDS} are equivalent, due to Theorem \ref{th:bc}.

In a statistical signal recovery setting \cite{chen2018harnessing,chi2019nonconvex}, where one aims to recover a signal matrix $X^\natural$ from a number of noisy observations, there are abundant results showing that  for \eqref{eq: nnm} with a suitable choice of $h$ (usually convex) and a proper choice of $\lambda$, the solution of \eqref{eq: nnm} is close to $X^\natural$ and is optimal  in a statistical sense; see \cite{candes2011tight,chen2020noisy,wainwright2019high}. 
{In the setting with convex $h$, We can solve \eqref{eq: nnm} by finding a 2NP of \eqref{eq: NNMregDS}.
If instead we were to use the Burer-Monteiro formulation \eqref{eq: bc.ssv.bm} to solve \eqref{eq: PSDreg} --- a more compact formulation than \eqref{eq: NNMregDS} when $k<d$ --- we would need more}
stringent conditions on $h$ (e.g., the so-called restricted isometry property) or $k$ (e.g. $k$ needs to be exactly the rank of $X^\natural$), as well as a low noise level, and possibly more observations than statistically necessary (see for example \cite{zhu2018global,ge2017no,zhang2022improved}). 
By contrast, the ``overparametrized" formulation \eqref{eq: NNMregDS} requires only convexity of $h$ and a proper choice of $\lambda$.
Its requirements on the noise level and the number of observations are no more stringent than those more classical convex approaches \cite{candes2011tight,chen2020noisy,wainwright2019high}.
The contrast between these two formulations shows the possible advantages of overparametrization in this context.

%

If we use the symmetric parametrization $\bar{X} = F^2$ in \eqref{eq: PSDreg}, where
\[
F = \begin{bmatrix}
    Y_1 &  Y_3 \\ Y_3^\top  & Y_2
\end{bmatrix}, \;\; \mbox{that is,} \; Y = \begin{bmatrix}
    Y_1 &  Y_3 \end{bmatrix} \;\; \mbox{and} \;\; Z = \begin{bmatrix} Y_3^\top  & Y_2 \end{bmatrix}, 
\]
with $Y_1$ and $Y_2$ symmetric,  then by substituting into \eqref{eq: NNMregDS}, we find that \eqref{eq: PSDreg} is equivalent (in terms of global minimizers) to the following problem: 
\begin{equation}\label{eq: NNMregDS-sym} \tag{NNM-DS-Sym}
\begin{aligned}
    \min & \; h(Y_1 Y_3 + Y_3 Y_2) + \tfrac{\lambda}{2}\big(\fronorm{Y_1}^2+\fronorm{Y_2}^2+2 \fronorm{Y_3}^2\big) \\
{\mbox{s.t.}} &\quad {Y_1\in \symMat^{\dm_1},  \; Y_2\in \symMat^{\dm_2},\; Y_3 \in \RR^{\dm_1 \times  \dm_2}}
\end{aligned}
\end{equation}
From Theorem \ref{th:bc-ssv.sym}, we know that if $F$ a 2NP of \eqref{eq: NNMregDS-sym}, then $\bar{X}= F^2$ is a 1P of \eqref{eq: PSDreg}. Hence, if $h$ is convex, then such an $\bar{X}$ is globally optimal. 

\section{Slack-Variable Formulations \eqref{eq:f}, \eqref{eq:ssv}, and \eqref{eq:ssv-sym}} 
\label{sec: ssv}
In this section, we show that versions of our main theorems, Theorems~\ref{th:bc} and \ref{th:bc-ssv.sym}, continue to hold for the more complicated problems \eqref{eq:f}, \eqref{eq:ssv}, and \eqref{eq:ssv-sym}. 
We define the following Lagrangians for \eqref{eq:f} and \eqref{eq:ssv}, respectively, making use of  a Lagrange multiplier $\Lambda \in \symMat^\dm$:
\begin{subequations}\label{eq: Lgs}
\begin{align}
    \cL(x,\Lambda) & := f(x) - \tr( \Lambda C(x)),\\
    \label{eq: Lgs.ssv}
    \cLssv (x,F,\Lambda) & := f(x) - \tr\left( \Lambda \left(C(x)-FF^\top\right)\right).
\end{align}
\end{subequations}
The quadratic form $D^2 \cL(x,\Lambda)[z,z]$ for any $x,z\in \R^n$ and $\Lambda \in \symMat^\dm$, can be written explicitly as 
\begin{equation}
\label{eq: Lg_original_quadratic_form}
  D^2 \cL(x)[z,z] 
   =  D^2 f(x)[z,z] -\tr(\Lambda (D^2 C_x[z,z])).
\end{equation}
The quadratic form $D^2 \cLssv (x,F,\Lambda)[(z,\Delta ),(z,\Delta )]$ for any $z\in \R^n$, $F,\Delta \in \R^{\dm \times \dm}$, and $\Lambda \in \symMat^\dm$ is 
\begin{multline}\label{eq: Lg_sq_slack_quadratic_form}
 D^2 \cLssv (x,F,\Lambda)[(z,\Delta ),(z,\Delta )] \\
=  D^2 f(x)[z,z]+ 2\tr(\Lambda \Delta  \Delta ^\top) -\tr(\Lambda (D^2 C_x[z,z])).
\end{multline}

\subsection{Equivalence between 2NP of \eqref{eq:f} and \eqref{eq:ssv}}

We start with definitions of the first-order and second-order conditions for the problems \eqref{eq:f} and \eqref{eq:ssv}. Note that (as defined in Section~\ref{sec:notation}) $DC_x^*(\Lambda)$ is the adjoint of the linear map $DC_x: \RR^n \to \symMat^d$.
\begin{definition}[First order and second order conditions of \eqref{eq:f}]\label{def:f}
We say that a point $(x,\Lambda) \in \real^n \times \symMat^\dm $ satisfies {\em first-order} conditions (1C) to be a solution of \eqref{eq:f} if 
\begin{equation} \label{eq: f-foc}\tag{NSDP-1C}
\nabla f(x) = 
D C_x^*(\Lambda),\; \Lambda C(x) =0,\; C(x) \succeq 0,\text{and}\; \Lambda \succeq 0.
\end{equation}
Let $r = \rank(C(x))$ and define $V \in \R^{\dm \times (d-r)}$ to be the matrix with 
orthonormal columns whose column space is the null space of $C(x)$.  
We say that $(x,\Lambda)$ satisfies {\em second-order necessary} conditions (2NC) to be a solution of \eqref{eq:f} if
it satisfies \eqref{eq: f-foc} and in addition, 
for all $z\in \RR^n$ s.t. 
$V^\top  D C_x[z]V=0$, there holds the inequality 
\begin{equation}\label{eq: f-soc}\tag{NSDP-2NC}
D^2 \cL(x)[z,z] + 2 \tr \left( DC_x[z] C(x)^\dagger DC_x[z] \Lambda \right)\geq 0.
\end{equation}
Here $C(x)^\dagger$ is the pseudo-inverse of the matrix $C(x)$.
\end{definition}

\begin{definition}[First order and second order conditions of \eqref{eq:ssv}]\label{def:ssv}
We say that $(x,F,\Lambda )$ satisfies {\em first-order} conditions (1C) to be a solution of \eqref{eq:ssv} if 
\begin{equation} \label{eq: ssv-foc}\tag{SSV-1C}
\nabla f(x) = 
D C_x^*(\Lambda),\; \Lambda F =0,\; C(x)  = FF^\top.
\end{equation}
We say that $(x,F,\Lambda)$ satisfies {\em second-order necessary} conditions (2NC) to be a solution of \eqref{eq:ssv} if
it satisfies first-order conditions and in addition, 
for all $(z,\Delta ) \in \R^n\times \R^{\dm \times \dm}$ s.t. 
$DC_x[z] = F\Delta ^\top + \Delta  F^\top$, there holds the inequality,
\begin{equation}\label{eq: ssv-soc}\tag{SSV-2NC}
D^2 \cLssv (x,F,\Lambda)[(z,\Delta ),(z,\Delta )] 
\geq 0.
\end{equation}
\end{definition}

We now state the equivalence result.
\begin{theorem} 
    \label{th:f}
    If a point $(x,\Lambda)\in \R^n \times \symMat^\dm$ satisfies 2NC for \eqref{eq:f}, then any $(x,F,\Lambda)$ with $C(x) = FF^\top$ and $F\in \R^{\dm \times \dm}$ satisfies 2NC for \eqref{eq:ssv}. Conversely, 
if $(x,F,\Lambda) \in \R^n\times \R^{\dm \times \dm}\times  \symMat^\dm$ satisfies 2NC for \eqref{eq:bc.dss}, then $(x,\Lambda)$ satisfies 2NC for \eqref{eq:f}. 
\end{theorem}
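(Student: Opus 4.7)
The plan is to mirror the proof of Theorem~\ref{th:bc}, with $\Lambda$ playing the role of $\nabla h(X)$ and $C(x)$ playing the role of $X$. The new feature relative to Theorem~\ref{th:bc} is the presence of the variable $x$ and the extra curvature term $-\tr(\Lambda D^2 C_x[z,z])$ arising from the constraint. However, this term appears identically in both $D^2 \cL(x,\Lambda)[z,z]$ and $D^2 \cLssv(x,F,\Lambda)[(z,\Delta),(z,\Delta)]$ (compare \eqref{eq: Lg_original_quadratic_form} and \eqref{eq: Lg_sq_slack_quadratic_form}), so the comparison between 2NC of the two formulations reduces, up to setting $W = DC_x[z]$, to the comparison already carried out in Theorem~\ref{th:bc}. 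A key geometric fact I will use throughout is that the null space of $C(x) = FF^\top$ coincides with the null space of $F^\top$, so the matrix $V = V_{C(x)}$ in Definition~\ref{def:f} satisfies $V^\top F = 0$.

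\textbf{Forward direction.} Assume $(x,\Lambda)$ satisfies \eqref{eq: f-foc} and \eqref{eq: f-soc}, and let $F$ be any matrix with $FF^\top = C(x)$. For the first-order conditions \eqref{eq: ssv-foc}, the stationarity identity $\nabla f(x) = DC_x^*(\Lambda)$ is inherited, and the same ``squaring'' argument used in \eqref{eq:fg1} applied to $\Lambda C(x) = \Lambda FF^\top$ yields $\Lambda F = 0$. For \eqref{eq: ssv-soc}, take any $(z,\Delta)$ with $DC_x[z] = F\Delta^\top + \Delta F^\top$ and set $W = DC_x[z]$. Since $V^\top F = 0$, we have $V^\top W V = 0$, so \eqref{eq: f-soc} applies with this $z$. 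Writing
\begin{equation*}
D^2\cLssv[(z,\Delta),(z,\Delta)] = \Bigl(D^2\cL(x,\Lambda)[z,z] + 2\tr(W C(x)^\dagger W\,\Lambda)\Bigr) + 2\tr\bigl(\Lambda(\Delta\Delta^\top - W C(x)^\dagger W)\bigr),
\end{equation*}
the first bracket is $\ge 0$ by \eqref{eq: f-soc}, and the second is $\ge 0$ by Lemma~\ref{lem: nonnegativeT2} applied with $S = \Lambda$ (valid because $\Lambda \succeq 0$ and $\Lambda F = 0$).

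\textbf{Reverse direction, first-order conditions.} Assume $(x,F,\Lambda)$ satisfies \eqref{eq: ssv-foc} and \eqref{eq: ssv-soc}. Feasibility $C(x) \succeq 0$ follows from $C(x) = FF^\top$, and $\Lambda C(x) = 0$ follows from $\Lambda F = 0$. The stationarity identity $\nabla f(x) = DC_x^*(\Lambda)$ is already part of \eqref{eq: ssv-foc}. The only nontrivial point is $\Lambda \succeq 0$: if $F$ is invertible then $\Lambda F = 0$ forces $\Lambda = 0$, and otherwise we pick $v \ne 0$ with $Fv = 0$ and, for arbitrary $w \in \RR^d$, substitute $z = 0$ and $\Delta = w v^\top$ into \eqref{eq: ssv-soc}. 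Since $Fv = 0$ implies $v^\top F^\top = 0$, both $F\Delta^\top$ and $\Delta F^\top$ vanish, so the constraint $DC_x[z] = F\Delta^\top + \Delta F^\top$ holds trivially and \eqref{eq: ssv-soc} collapses to $2\|v\|^2 \tr(\Lambda w w^\top) \ge 0$, yielding $\Lambda \succeq 0$. This parallels \eqref{eq: dss.proof.nabla_psd}.

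\textbf{Reverse direction, second-order condition.} Given any $z$ with $V^\top DC_x[z] V = 0$, set $W = DC_x[z] \in \symMat^d$. Applying Lemma~\ref{lem: constructionDelta} with $X = C(x)$ and $S = \Lambda$ produces $\Delta \in \RR^{d \times d}$ satisfying $W = F\Delta^\top + \Delta F^\top$ and $\tr\bigl(\Lambda(W C(x)^\dagger W - \Delta\Delta^\top)\bigr) = 0$. Substituting this $(z,\Delta)$ into \eqref{eq: ssv-soc} and rearranging as
\begin{equation*}
0 \le D^2\cLssv[(z,\Delta),(z,\Delta)] = \Bigl(D^2\cL(x,\Lambda)[z,z] + 2\tr(W C(x)^\dagger W\,\Lambda)\Bigr) + 2\tr\bigl(\Lambda(\Delta\Delta^\top - W C(x)^\dagger W)\bigr),
\end{equation*}
the final term vanishes by the choice of $\Delta$, yielding \eqref{eq: f-soc}.

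The main technical obstacle is already isolated in Lemmas~\ref{lem: nonnegativeT2} and \ref{lem: constructionDelta}; once these are invoked with $S=\Lambda$, the rest is bookkeeping around the two Lagrangians. The only place one has to be careful is verifying $V^\top F = 0$, so that the constraint $V^\top W V = 0$ in \eqref{eq: f-soc} is automatic for $W$ of the form $F\Delta^\top + \Delta F^\top$, and applying the lemma in the reverse direction to construct a $\Delta$ of exactly this form given an admissible $z$.
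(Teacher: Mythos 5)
Your proposal is correct and follows essentially the same route as the paper's own proof: the same reduction to the Theorem~\ref{th:bc} argument with $S=\Lambda$ and $W=DC_x[z]$, the same use of $V^\top F=0$ to verify admissibility of $z$, the same $\Delta = wv^\top$ construction for $\Lambda\succeq 0$, and the same invocations of Lemmas~\ref{lem: nonnegativeT2} and \ref{lem: constructionDelta} for the two directions of the second-order comparison. No gaps.
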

    \begin{proof}
\underline{\textbf{\eqref{eq: f-foc}, \eqref{eq: f-soc} yields \eqref{eq: ssv-foc}, \eqref{eq: ssv-soc}}}
    Suppose $(x,\Lambda)$ satisfies \eqref{eq: f-foc}, \eqref{eq: f-soc}.  Taking any $F$ such that $C(X) = FF^\top$, we show that $(x,F,\Lambda)$  satisfies \eqref{eq: ssv-foc}, \eqref{eq: ssv-soc}. 
    We show that $\Lambda C(x)=0 \implies \Lambda F=0$ by an argument like that of \eqref{eq:fg1}, namely:
    \[
\Lambda C(x) = 0 \implies \Lambda FF^\top=0 \implies \Lambda FF^\top \Lambda = 0 \implies (\Lambda F) (\Lambda F)^\top = 0 \implies \Lambda F=0.
    \]
    The remaining first-order conditions in \eqref{eq: ssv-foc} are trivially implied by \eqref{eq: f-foc}.
    To verify the condition \eqref{eq: ssv-soc}, we take any $(z,\Delta )\in \R^n \times \R^{\dm \times \dm}$ s.t. 
    $DC_x[z] = F\Delta ^\top + \Delta  F^\top$. Note that $V^\top DC_x[z] V = 0$ since $V^\top F =0$ as $F$ shares the same column space as $C(x)$. Furthermore,  
    from \eqref{eq: Lg_original_quadratic_form} and \eqref{eq: Lg_sq_slack_quadratic_form}, we see that 
    \begin{multline*}
     D^2\cLssv (x,F,\Lambda)[(z,\Delta ),(z,\Delta )]
        = \underbrace{D^2 \cL(x,\Lambda)[z,z] + 
     2\tr(DC_x[z] C^\dagger(x) DC _x(z) \Lambda)}_{T_1} \\ 
     + 2\underbrace{\tr\left((\Delta \Delta ^\top - DC_x[z] C^\dagger(x) DC _x(z)) \Lambda\right)} _{T_2}.
        \end{multline*}
Here $T_1\geq 0$ due to \eqref{eq: f-soc}. The inequality $T_2 \geq 0$ is immediate from Lemma \ref{lem: nonnegativeT2}. 

\underline{\eqref{eq: ssv-foc}, \eqref{eq: ssv-soc} yields \textbf{\eqref{eq: f-foc}, \eqref{eq: f-soc}}}
    Given  $(x,F,\Lambda)$ satisfying  \eqref{eq: ssv-foc}, \eqref{eq: ssv-soc}, we show that $(x,\Lambda)$ satisfies \eqref{eq: f-foc}, \eqref{eq: f-soc}. 
    From \eqref{eq: ssv-foc}, we see that the only missing condition in \eqref{eq: f-foc} is $\Lambda \succeq 0$. 
    When $F$ is invertible, then $\Lambda F =0 \implies \Lambda =0$, so that $\Lambda \succeq 0$ in this special  case. 
    Otherwise, there exists a  vector $v\in \R^n$ with $\|v\|_2=1$ such that $Fv =0$. 
    By setting $z=0$ and $\Delta  = wv^\top$, where $w\in \R^n$ is arbitrary, we see that the condition $DC_x[z] = 0 = F\Delta ^\top + \Delta  F ^\top$ holds for any $w$. Using \eqref{eq: ssv-soc}, we see that 
    $D^2 \cLssv (x,F,\Lambda)[(z,\Delta ),(z,\Delta )] = 2\tr(ww^\top  \Lambda )\geq 0$.
    Since $w\in \R^n$ is arbitrary, this condition implies that $\Lambda \succeq 0$. 
    
    To verify that \eqref{eq: f-soc}, for all $z\in \R^n$ such that $V ^\top DC_x[z] V =0$, we need to find  some $\Delta  \in \R^{\dm \times \dm}$ such that 
    $DC_x [z] = \Delta  F^\top + F \Delta ^\top$ and 
    \begin{multline*}
        D^2 \cL(x) [z,z] + 2\tr( DC_x[z] C^\dagger (x) DC_x[z]\Lambda ) \geq 0 
   \iff 
   \underbrace{D^2 \cLssv (x,F,\Lambda)[(z,\Delta ),(z,\Delta )]}_{T_1} \\
   + 2\underbrace{\tr\left((-\Delta \Delta ^\top  + DC_x[z] C^\dagger(x) DC _x(z)) \Lambda\right)} _{T_2} \geq 0.
    \end{multline*}
Note that $T_1\geq 0$ due to \eqref{eq: ssv-soc} so long as $\Delta $ is found. Our task now is to find $\Delta $ such that (i) $DC_x[z] = \Delta  F^\top + F\Delta ^\top$, and (ii) $T_2\geq 0$. 
But Lemma~\ref{lem: constructionDelta} yields this result immediately, completing our proof.
\end{proof}

\subsection{Relationship between 2NP of \eqref{eq:f} and \eqref{eq:ssv-sym}}
We note that the Lagrangian of \eqref{eq:ssv-sym} is simply $\cLssv $ from \eqref{eq: Lgs.ssv} except that we  restrict the variable $F$ to be symmetric. 

\begin{definition}[First order and second order conditions of \eqref{eq:ssv-sym}]\label{def:ssv-sym}
We say that $(x,F,\Lambda )\in \RR^n \times \symMat^d \times \symMat^d$ satisfies {\em first-order} conditions (1C) to be a solution of \eqref{eq:ssv-sym} if 
\begin{equation} \label{eq: ssv-sym-foc}\tag{SSV-Sym-1C}
\nabla f(x) = 
D C_x^*(\Lambda),\; \Lambda \circ F =0,\; C(x)  = F^2.
\end{equation}
We say that $(x,F,\Lambda)$ satisfies {\em second-order necessary} conditions (2NC) to be a solution of \eqref{eq:ssv-sym} if it satisfies \eqref{eq: ssv-sym-foc} and in addition,  for all $(z,\Delta) \in \R^n\times \symMat^{\dm}$ such that $DC_x[z] = 2\Delta  \circ F $, there holds the inequality,
\begin{equation}\label{eq: ssv-sym-soc}\tag{SSV-Sym-2NC}
D^2 \cLssv (x,F,\Lambda)[(z,\Delta ),(z,\Delta )] 
\geq 0.
\end{equation}
\end{definition}

Our next theorem shows the relationship between the 2NPs of \eqref{eq:f} and \eqref{eq:ssv-sym}.

\begin{theorem}
    \label{th:ssv.sym}
  If $(x,\Lambda)\in \R^n \times \symMat^\dm$ satisfies 2NC for  \eqref{eq:f}, then any $(x,F,\Lambda)\in \RR^n \times \symMat^d \times \symMat^d$ with $C(x) = F^2$ satisfies 2NC for \eqref{eq:ssv-sym}. Conversely, 
if $(x,F,\Lambda) \in \R^n \times \symMat^{\dm}\times  \symMat^\dm$ satisfies 2NC for \eqref{eq:ssv-sym}, and, in addition, $F$ satisfies the eigenvalue condition \eqref{eq:ec}, then $(x,F,\Lambda)$ also satisfies 2NC for \eqref{eq:f}.
\end{theorem}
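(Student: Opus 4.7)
The plan is to mirror the structure of the proof of Theorem \ref{th:bc-ssv.sym}, promoted to the slack-variable setting. The same toolkit applies: Lemmas \ref{lem: nonnegativeT2} and \ref{lem: constructionDelta} handle the ``perturbation'' terms coming from the pseudo-inverse, while Lemma \ref{lem: sym_f_psd}, together with the block-structure argument from the proof of Theorem \ref{th:bc-ssv.sym}, recovers the dual conditions $\Lambda \succeq 0$ and $\Lambda C(x) = 0$. The Lagrangian identity \eqref{eq: Lg_sq_slack_quadratic_form} is the link that lets the decompositions from Section~\ref{sec: DS} carry over with essentially no change.

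For the forward direction, suppose $(x,\Lambda)$ satisfies \eqref{eq: f-foc} and \eqref{eq: f-soc}, and let $F \in \symMat^\dm$ satisfy $F^2 = C(x)$. Since $F$ and $C(x)$ share the same null space, $\Lambda C(x) = 0$ forces $\Lambda F = 0$ and hence $\Lambda \circ F = 0$; the remaining items of \eqref{eq: ssv-sym-foc} are immediate. For \eqref{eq: ssv-sym-soc}, fix any $(z,\Delta) \in \real^n \times \symMat^\dm$ with $DC_x[z] = 2\,\Delta \circ F = F\Delta + \Delta F$, note that $V^\top DC_x[z] V = 0$ because $V^\top F = 0$, and split
\begin{multline*}
D^2 \cLssv(x,F,\Lambda)[(z,\Delta),(z,\Delta)]
= \underbrace{D^2 \cL(x,\Lambda)[z,z] + 2\tr\!\bigl(DC_x[z]\, C(x)^\dagger DC_x[z]\,\Lambda\bigr)}_{T_1} \\
+ 2\underbrace{\tr\!\bigl((\Delta\Delta^\top - DC_x[z]\, C(x)^\dagger DC_x[z])\,\Lambda\bigr)}_{T_2}.
\end{multline*}
Then $T_1 \geq 0$ by \eqref{eq: f-soc} and $T_2 \geq 0$ by Lemma \ref{lem: nonnegativeT2} applied with $S = \Lambda$.

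For the reverse direction, the main task is to recover $\Lambda \succeq 0$ and $\Lambda C(x) = 0$ from \eqref{eq: ssv-sym-foc}, \eqref{eq: ssv-sym-soc}, and \eqref{eq:ec}; the remaining pieces of \eqref{eq: f-foc} are already present. Setting $z = 0$ in \eqref{eq: ssv-sym-soc} reduces the constraint to $\Delta \circ F = 0$ and the inequality to $2\tr(\Lambda \Delta^2) \geq 0$ for every such symmetric $\Delta$; combined with $\Lambda \circ F = 0$ and \eqref{eq:ec}, the first bullet of Lemma \ref{lem: sym_f_psd} applied with $S = \Lambda$ yields $\Lambda \succeq 0$. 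The eigendecomposition $F = U\Sigma U^\top$ used inside the proof of Theorem \ref{th:bc-ssv.sym} then shows that $T := U^\top \Lambda U$ can be nonzero only in its lower-right $(\dm - r) \times (\dm - r)$ block---this is where \eqref{eq:ec} eliminates cancellations among nonzero eigenvalues of $F$---so $T\Sigma^2 = 0$ and thus $\Lambda C(x) = \Lambda F^2 = 0$. Finally, to establish \eqref{eq: f-soc}, fix any $z$ with $V^\top DC_x[z] V = 0$; Lemma \ref{lem: constructionDelta} applied to $W = DC_x[z]$ and $S = \Lambda$ (invoking the eigenvalue condition \eqref{eq:ec} to obtain a \emph{symmetric} $\Delta$) supplies $\Delta \in \symMat^\dm$ with $DC_x[z] = 2\,\Delta \circ F$ and $T_2 = 0$; then \eqref{eq: ssv-sym-soc} gives $T_1 \geq 0$, which is exactly \eqref{eq: f-soc}.

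The hard part will be the reverse-direction recovery of $\Lambda \succeq 0$ and $\Lambda C(x) = 0$: the eigenvalue condition \eqref{eq:ec} does subtle work here, ensuring that the Sylvester-type identity $T\Sigma + \Sigma T = 0$ (coming from $\Lambda \circ F = 0$) forces $T_{ij} = 0$ for every pair $(i,j)$ outside the zero-eigenvalue block of $\Sigma$. Without \eqref{eq:ec}, Example \ref{exm:ssv-sym} demonstrates that $(x,\Lambda)$ need not satisfy even \eqref{eq: f-foc}, so the hypothesis is genuinely indispensable, and the proof structure above is tight.
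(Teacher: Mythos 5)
Your proposal is correct and follows essentially the same route as the paper's proof, which likewise reduces everything to the decompositions from Theorem~\ref{th:f}, Lemma~\ref{lem: nonnegativeT2}, the symmetric-$\Delta$ case of Lemma~\ref{lem: constructionDelta}, and Lemma~\ref{lem: sym_f_psd} with $z=0$. You are in fact slightly more careful than the paper in one spot: you explicitly derive $\Lambda C(x)=0$ from $\Lambda\circ F=0$ and \eqref{eq:ec} before invoking Lemma~\ref{lem: constructionDelta} (which needs $SX=0$), a step the paper's proof leaves implicit.
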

\begin{proof}
\underline{\textbf{\eqref{eq: f-foc}, \eqref{eq: f-soc} yields \eqref{eq: ssv-sym-foc}, \eqref{eq: ssv-sym-soc}}}
Suppose  that $(x,\Lambda)$ is given and satisfies 2NC for \eqref{eq:f}. 
We shall prove that $(x,F,\Lambda)$ for any $F \in \symMat^d$ such that  $C(x) = F^2$ is an 2NP of \eqref{eq:ssv-sym}. 
The only nontrivial requirement in \eqref{eq: ssv-sym-foc} is to show that  $\Lambda \circ F =0$. 
We can argue as in the proof of Theorem~\ref{th:f} that $\Lambda C(x)=0 \implies \Lambda F=0$, and by symmetry of $\Lambda$ and $F$, we have $0 = (\Lambda F)^\top = F \Lambda$, and hence $\Lambda \circ F=0$.
To prove \eqref{eq: ssv-sym-soc}, we follow the corresponding inference in the first part of the proof of Theorem~\ref{th:f}.

\underline{\textbf{\eqref{eq: ssv-sym-foc}, \eqref{eq: ssv-sym-soc} yields \eqref{eq: f-foc}, \eqref{eq: f-soc} under the}} \\ \underline{\textbf{eigenvalue condition \eqref{eq:ec}}} 
Now suppose that $(x,F,\Lambda)$ satisfies 2NC for \eqref{eq:ssv-sym} and additionally that $F$ satisfies condition \eqref{eq:ec}. 
To show \eqref{eq: f-foc}, the only nontrivial step is to show that $\Lambda \succeq 0$. But this follows from Lemma \ref{lem: sym_f_psd}, by setting $z=0$ in \eqref{eq: ssv-sym-soc}. 
To establish  \eqref{eq: f-soc}, we can follow the same procedure as in the proof of the second part of Theorem~\ref{th:f}. 
\end{proof}

Finally, we show that the eigenvalue condition \eqref{eq:ec} $\sigma_i +\sigma_j \not =0$ is actually critical for the ``converse" direction of Theorem~\ref{th:ssv.sym}. 
Otherwise, it is possible that $(x,F,\Lambda) \in \RR^n \times \symMat^d \times \symMat^d$ is a 2NP of \eqref{eq:ssv-sym}, yet there is no $\Psi \succeq 0$ such that $(x,\Psi)$ is a 1P of \eqref{eq:f}.

\begin{exm}
\label{exm:ssv-sym}
    Consider the following problem, for which $n=1$ and $d=2$:
    \begin{equation}
        \min_{x\in \R} \;f(x):= \frac{1}{2}(x+1)^2\quad \text{s.t.}\quad  \; C(x) = \begin{bmatrix}
            x^2 + 1 & x \\ 
            x & x^2 +1 
        \end{bmatrix}\succeq 0.
    \end{equation}
    At $x=0$, we have 
\begin{equation}
    \nabla f(0) = 1,\; C(0) = I,\; DC_0[z] = z\begin{bmatrix}
            0 & 1 \\ 
            1 & 0 
        \end{bmatrix},
\end{equation}
and 
\begin{equation}
D^2 f_0[z,z] = z^2, \quad  \text{and}\quad 
        D^2 C_0[z,z] = z^2 \begin{bmatrix}
            2 & 0 \\ 
            0 & 2 
        \end{bmatrix}.
\end{equation}      
There is no $\Psi \succeq 0$ such that $(x,\Psi)$ satisfy \eqref{eq: f-foc}. 
If such a $\Psi \succeq 0$ were to exist, we would have from $C(0)\Psi =0$ that  $\Psi =0$. 
But then  $1= \nabla f(0) \ne DC_0^* \Psi  = 0$. 

However, we show that $(x,F,\Lambda) = 
\left( 0, \begin{bmatrix}
    1 & 0 \\ 
    0 & -1 
\end{bmatrix}, \begin{bmatrix}
    0 & \frac{1}{2} \\ 
    \frac{1}{2} & 0 
\end{bmatrix}\right)$ satisfies the 2NC for \eqref{eq:ssv-sym}. 
For  \eqref{eq: ssv-sym-foc}, we have $1= \nabla f(0) = DC_0^*(\Lambda) = \tr(DC_0[1]\Lambda )=1$, $\Lambda \circ F = 0$, and $C(0) = F^2$, as required.
For \eqref{eq: ssv-sym-soc}, we note that for any $(z,\Delta )\in \R \times \symMat^2$ such that 
$DC_0[z] = \Delta  F^\top + F \Delta ^\top$, we must have $z = 0$ and $\Delta  = y \begin{bmatrix}
    0 & 1 \\ 1 & 0
\end{bmatrix}$ for some $y\in \R$.
Hence, we see that 
\[
D^2 \cLssv (x,F,\Lambda)[(z,\Delta ),(z,\Delta )] 
=  \underbrace{D^2 f(x)[z,z]}_{=0 \text{ as $z=0$}}+ 2\underbrace{\tr(\Lambda \Delta  \Delta ^\top)}_{=0} -\underbrace{\tr(\Lambda (D^2 C_x[z,z]))}_{=0\text{ as $z=0$}},
\]
so that $D^2 \cLssv (x,F,\Lambda)[(z,\Delta ),(z,\Delta )] \geq 0$, as required.
\end{exm}

\section{Conclusion} \label{sec:conclusion}

We have examined matrix optimization problems with semidefiniteness constraints, along with their ``squared-variable" reformulations that replace the semidefinite variable $X$ with a product $FF^T$.
Our main results show close correspondences between second-order points for the original problems and their squared-variable reformulations.
These observations open the way to considering an alternative approach for solving the original problems: Apply an algorithm to the squared-variable reformulation that guarantees convergence to second-order points.
Many optimization algorithms converge to such points in practice, even when the convergence theory guarantees only convergence to first-order points. 
The practicality of such a scheme is a subject for future investigation.

\bibliographystyle{alpha}
\bibliography{reference}


\appendix

\section{Proof of Lemma~\ref{lem: constructionDelta}}
\label{app:A}


\begin{proof}

Define the SVD of $F$ as follows:
\[
F = U \begin{bmatrix} \Lambda & 0 \\ 0 & 0 \end{bmatrix} Y^T,
\]
where $U \in \RR^{d \times d}$ and $Y \in \RR^{k \times k}$ are orthogonal matrices and 
\begin{equation} \label{eq:ux1}
\Lambda = \diag(\lambda_1,\lambda_2,\dotsc,\lambda_r)
\end{equation}
is a diagonal matrix constructed from the nonzero singular values of $F$, satisfying $\lambda_1 \ge \lambda_2 \ge \dotsc \ge \lambda_r>0$. 
We partition $U$ as $U = [U_X \; V_X]$, where $U_X \in \RR^{d\times r}$ contains the singular vectors corresponding to the nonzero singular values of $F$. 
Since $X=FF^\top$, we have 
\begin{equation} \label{eq:ux2}
    X = U_X \Lambda^2 U_X^T, \quad X^\dagger = U_X \Lambda^{-2} U_X^T.
\end{equation}
The columns of $V_X$ span the null space of $X$ and $F^T$.

We first prove the claim (i), defining in the process the matrix $\Delta$ with the properties  claimed in the lemma.

Denoting by  $J_{s \times t}$ denotes an $s \times t$  matrix whose entries are all $1$, we define
\begin{equation} \label{eq:Ldef}
L := \begin{bmatrix}
\frac{1}{2}  J_{r\times r} & J_{r\times (d-r)} \\ 
J_{(d-r)\times r} & J_{(d-r)\times (d-r)} 
\end{bmatrix}.
\end{equation}
We define $\Delta$ as follows:
\begin{equation} \label{eq:defD}
\Delta  := U(L \odot (U^\top  WU))U^\top (F^\dagger)^{\top},
\end{equation}
where $\odot$ is the entrywise product. 
We can simplify $\Delta F^\top$ as follows.
\begin{align}
\nonumber
\Delta  F^\top  & =  U(L \odot (U^\top  WU))U^\top ((F^\dagger)^{\top} F^\top )\\ 
\nonumber
& \overset{(a)}{=} U(L \odot (U^\top  WU))U^\top U \begin{bmatrix}
    I_r & 0 \\ 
    0 & 0\\
\end{bmatrix} U^\top\\
\label{eq:si3a}
& = U(L \odot (U^\top  WU))\begin{bmatrix}
    I_r & 0 \\ 
    0 & 0\\
\end{bmatrix} U^\top,
\end{align}
In step $(a)$, we use the fact $F$ and $X$ share the same column space and the definition of pseudo-inverse. 
Recalling that the lower-right block of $U^\top W U$ is zero, because of  $V_X^\top W V_X =0$, we have for some matrices $A \in \symMat^r$ and $B \in \RR^{r \times \dm}$ that 
\begin{equation}
    \label{eq:ux3}
    U^\top W U = \begin{bmatrix}
    A & B \\ B^\top & 0
\end{bmatrix}, 
\end{equation}
and thus from \eqref{eq:Ldef}, 
\begin{equation}
    \label{eq:ux4}
    L\odot (U^\top WU) = \begin{bmatrix}
    \frac{1}{2} A & B \\ B^\top & 0
\end{bmatrix}.
\end{equation}
By substituting into \eqref{eq:si3a} and adding this quantity to its transpose, we have
\[
        \Delta  F^\top + F\Delta ^\top 
        = 
        U \begin{bmatrix}
        \frac{1}{2} A & 0 \\ B^\top & 0\\    
        \end{bmatrix} U^\top  + 
        U \begin{bmatrix}
        \frac{1}{2} A & B \\ 0 & 0\\    
        \end{bmatrix} U^\top 
=         U \begin{bmatrix}
        A & B \\ B^\top  & 0\\    
        \end{bmatrix} U^\top 
        = W,
\]
completing the proof of (i).

To show (ii), we have from the definition of $\Delta$ in \eqref{eq:defD} together with $X=FF^\top \implies X^\dagger = (F^\dagger)^\top (F^\dagger)$ that 
    \begin{align*} 
\Delta  \Delta  ^\top  & = 
U(L \odot (U^\top  WU))U^\top  (F^\dagger)^\top (F^\dagger) U (L \odot (U^\top  WU)) U^\top \\ 
& =U(L \odot (U^\top  WU))(U^\top  X^\dagger U) (L \odot (U^\top  WU)) U^\top.
    \end{align*} 
We now define 
\begin{equation}
    \label{eq:defR1R2}
    R_1 := U^\top W X^\dagger W U, \quad R_2 := U^\top \Delta \Delta^\top U,
\end{equation}
so that
\[
U (R_1-R_2) U^\top = W X^\dagger W - \Delta \Delta^\top.
\]
Expressing these matrices in terms of the matrices $A$, $B$ defined in \eqref{eq:ux3} and $\Lambda$ defined in \eqref{eq:ux1}, and noting \eqref{eq:ux2} and defining $D:= \Lambda^{-2}$, we have
\begin{align*}
  R_1 & =(U^\top  WU)(U^\top  X^\dagger U)  (U^\top  WU) \\
  & = 
  \begin{bmatrix}
    A & B \\ B^\top & 0
\end{bmatrix} 
\begin{bmatrix}
    D & 0 \\ 0 & 0
\end{bmatrix}
 \begin{bmatrix}
    A & B \\ B^\top & 0
\end{bmatrix} 
=
    \begin{bmatrix}
        A D A & A D B \\ 
        B^\top D A & B^\top D B
    \end{bmatrix}, \\
    R_2 &=(L\odot U^\top  WU)(U^\top  X^\dagger U)  (L\odot U^\top  WU) = 
    \begin{bmatrix}
       \frac{1}{4} A D A & \frac{1}{2} A D B \\ 
        \frac{1}{2}B^\top D A & B^\top D  B
    \end{bmatrix},
\end{align*}
so that
\begin{equation}
\label{eq:ux8}
    R_1 - R_2 
    = 
    \begin{bmatrix}
       \frac{3}{4} A D A & \frac{1}{2} A D B \\ 
        \frac{1}{2}B^\top D A & 0
    \end{bmatrix}.
\end{equation}
Now recall that the matrix $S \in \symMat^\dm$ in the statement of the theorem has $SX=XS=0$, so that $S$ lies in the null space of $X$, that is, the column space of $V_X$. Thus $U^\top S U$ has zeros everywhere except  in the lower right block (of dimension $(d-r) \times (d-r)$). 
We thus have
\[
\tr(S (WX^\dagger W -\Delta \Delta ^\top)) = \tr( S(U(R_1 - R_2)U^\top ) =  \tr((U^\top S U) (R_1-R_2) ) = 0,
\]
where the last step follows from the structure of $U^\top S U$ described above as well as the structure of  $R_1-R_2$ from \eqref{eq:ux8}.
This completes the proof of (ii).

For the final claim (concerning the case of  symmetric $F$), we write the eigenvalue decomposition of $F$ as 
\begin{equation}
    F = U \begin{bmatrix} \Sigma & 0 \\ 0 & 0 \end{bmatrix} U^\top,
\end{equation}
where $U \in \RR^{d \times d}$ is an orthogonal matrix and 
\begin{equation} \label{eq:diag_F}
\Sigma = \diag(\sigma_1,\sigma_2,\dotsc,\sigma_r)
\end{equation}
is a diagonal matrix constructed from the nonzero eigenvalues of $F$.
As before, we partition $U$ as $U = [U_X \; V_X]$, where $V_X \in \RR^{d\times (d-r)}$ spans the null space of $F$ and $X=F^2$.
We have 
\begin{equation} \label{eq:ux12}
    X = U_X \Sigma^2 U_X^T.
\end{equation}

We define $\Delta = U \bar\Delta U^T$, where
\begin{equation} \label{eq:barD}
    \bar\Delta_{ij} :=
    \begin{cases}
        [U^\top W U]_{ij}/(\sigma_i+\sigma_j), & \text{$i\leq r$ or $j\leq r$,}\\ 
        0, & \text{otherwise.}
    \end{cases}
\end{equation}
Note that because the eigenvalue condition is assumed, the denominators $\sigma_i+\sigma_j$ in this definition are all nonzero.
One can verify directly that $\Delta $ satisfies (i) $W = F \Delta +\Delta F $. 
To show that (ii) is satisfied by this choice of $\Delta$,  we define $R_1=U^\top W X^\dagger WU$ as in \eqref{eq:defR1R2}, and $R_2=U^\top \Delta^2 U = \bar{\Delta}^2$, also as in \eqref{eq:defR1R2}, but making use of the symmetry of $\Delta$ and the definition of $\bar\Delta$.

As in \eqref{eq:ux3}, we define $A$ and $B$ implicitly by
\[
U^\top W U = \begin{bmatrix}
    A & B \\ B^\top & 0
\end{bmatrix}
\]
and note from \eqref{eq:ux12} that 
\[
X^\dagger = U_X D U_X^T = U \begin{bmatrix}
    D & 0 \\ 0 & 0
\end{bmatrix} U^\top, \quad 
\mbox{where $D:= \Sigma^{-2} = \diag(1/\sigma_1^2,\dotsc, 1/\sigma_r^2)$.}
\]
Using this notation, we have 
\[
R_1 = \begin{bmatrix}
        ADA & ADB \\ 
        B^\top DA & B^\top D B
    \end{bmatrix}.
\]
Moreover, for some symmetric $A' \in \symMat^{r}$, we have for $\bar\Delta$ from \eqref{eq:barD} that
\[
\bar\Delta 
= \begin{bmatrix}
        A' & \Sigma^{-1} B \\
        (\Sigma^{-1}B)^\top & 0
\end{bmatrix}.
\]
Thus, we have that 
\[
R_2 = \bar\Delta^2 = 
\begin{bmatrix}
    \times & \times \\
    \times & B^\top DB
\end{bmatrix},
\]
so that 
\begin{equation} \label{eq:ux13}
R_1-R_2 = \begin{bmatrix}
    \times & \times \\
    \times & 0
\end{bmatrix},
\end{equation}
where the symbol ``$\times$" denotes certain submatrices whose contents are not relevant to this argument, and the zero block at the lower right of $R_1-R_2$ has size $(d-r) \times (d-r)$.
Hence 
\[
\tr(S (WX^\dagger W -\Delta \Delta ^\top)) = \tr( (U^\top SU)(R_1 - R_2) ) = 0,
\]
where the last step makes use of the structure of $R_1-R_2$ in \eqref{eq:ux13}, together with the fact, noted earlier in the proof, that $U^\top SU$ has nonzeros only in its lower right $(d-r) \times (d-r)$ block.
This completes the proof of the final claim.
\end{proof}

\section{Relationship between local minimizers of original and squared-variable formulations} 
\label{app:B}

It is clear that that \eqref{eq:bc}, \eqref{eq:bc.dss}, and \eqref{eq:bc.ssv-sym} are equivalent in terms of {\em global minimizers}, in the sense that a global minimizer for one of these formulations translates in an obvious way to a global minimizer for the others. 
Meanwhile, this paper focuses on the relationships between points satisfying {\em second-order conditions} for these various formulations. 
In this appendix,  we articulate the relationships between {\em local minimizers} and {\em strict local minimizers} for these formulations.

\subsection{Local minimizers of \eqref{eq:bc}, \eqref{eq:bc.dss}, and \eqref{eq:bc.ssv-sym}}
\label{sec:B1}

We start with a result on equivalence between (strict) local minimizers of \eqref{eq:bc} and \eqref{eq:bc.ssv-sym}, then follow in \Cref{thm:bc-ssv-local_min} a result about the relationship between local minimizers of \eqref{eq:bc} and \eqref{eq:bc.dss}.

\begin{theorem}\label{thm:bc-ssv-sym-local_min}
    Suppose $X \in \symMat^\dm$ is a (strict) local minimizer of \eqref{eq:bc}, then any $F$ with $X = F^2$ is a (strict) local minimizer of \eqref{eq:bc.ssv-sym}. \textcolor{black}{Conversely, if $F$ is a  local minimizer of \eqref{eq:bc.ssv-sym} and satisfies the eigenvalue condition \eqref{eq:ec}, then $X = F^2$ is a local minimizer of \eqref{eq:bc}. Moreover,  if $F$ is a strict local minimizer of \eqref{eq:bc.ssv-sym}, then $X = F^2$ is a strict local minimizer of \eqref{eq:bc}.}
\end{theorem}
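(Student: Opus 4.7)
The plan is to prove the two directions separately, using continuity of the squaring map for the forward direction and an explicit local construction of symmetric square roots for the converse.

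For the forward direction, suppose $X$ is a local minimizer of \eqref{eq:bc} with witnessing neighborhood $\mathcal{N}_X$ in the PSD cone. Continuity of $F' \mapsto (F')^2$, together with $F^2 = X$, yields a neighborhood $\mathcal{N}_F$ of $F$ with $(F')^2 \in \mathcal{N}_X$ for all $F' \in \mathcal{N}_F$, so $g(F') = h((F')^2) \ge h(X) = g(F)$, giving the non-strict claim. For the strict version, I will additionally need $F' \neq F$ in $\mathcal{N}_F$ to imply $(F')^2 \neq X$, i.e., that $F$ is an isolated symmetric square root of $X$; this follows from the construction lemma below applied to $X' = X$, provided $F$ satisfies \eqref{eq:ec}. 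Without \eqref{eq:ec} the strict claim can fail, since a rotation inside an invariant plane spanned by eigenvectors of $F$ whose eigenvalues sum to zero produces a nontrivial path of distinct symmetric square roots of $X$; I will flag this as an implicit hypothesis for the strict forward statement.

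The core of the converse is the following construction lemma: if $F \in \symMat^d$ satisfies \eqref{eq:ec} and $X' \succeq 0$ is sufficiently close to $X = F^2$, then there is a symmetric $F'$ close to $F$ with $(F')^2 = X'$. I will work in the eigenbasis $F = U \Sigma U^\top$ with $\Sigma = \diag(\sigma_1, \ldots, \sigma_r, 0, \ldots, 0)$, parametrize $F' = F + U B U^\top$, and reduce $(F')^2 = X'$ to $\Sigma B + B \Sigma + B^2 = U^\top (X' - X) U =: \bar W$, i.e., entrywise $(\sigma_i + \sigma_j) B_{ij} + (B^2)_{ij} = \bar W_{ij}$. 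In the null-index block $i, j > r$ the linear coefficient vanishes, but the corresponding block of $\bar W$ equals $V_X^\top X' V_X \succeq 0$, so I set $B_{12} = 0$ and take $B_{22}$ to be the PSD square root of $V_X^\top X' V_X$. Off the null block, \eqref{eq:ec} supplies a uniform gap $c := \min |\sigma_i + \sigma_j| > 0$, and a contraction-mapping or implicit-function-theorem argument on $\symMat^d$ produces the remaining entries of $B$ with $\|B\|_F = O(\|\bar W\|_F)$. Symmetry of $F'$ is automatic from symmetry of $\bar W$ and of the PSD square root.

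With this lemma in hand, the non-strict converse is immediate: given $X' \succeq 0$ near $X$, the constructed $F'$ lies in the local-minimum neighborhood of $F$, so $h(X') = g(F') \ge g(F) = h(X)$. For the strict converse, I will first observe that strict local minimality of $F$ itself forces \eqref{eq:ec}, via the same rotation construction noted earlier: otherwise $g$ would be constant on a nontrivial path through $F$, contradicting strictness. Then the lemma applies, and for $X' \neq X$ the produced $F'$ differs from $F$ (since $(F')^2 = X' \neq X = F^2$), yielding $h(X') = g(F') > g(F) = h(X)$. The hard part will be the construction lemma, specifically coupling the PSD square root on the null block with a fixed-point argument on the complementary block, with the quantitative dependence on $c$ from \eqref{eq:ec} driving everything.
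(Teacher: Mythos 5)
Your overall architecture matches the paper's: the forward direction by continuity of $F'\mapsto (F')^2$, the converse via a local symmetric-square-root lemma under \eqref{eq:ec}, and the strict converse by first showing that strict minimality of $F$ forces \eqref{eq:ec} through the rotation/sign construction --- that last step is essentially verbatim the paper's argument. Where you genuinely diverge is the square-root lemma itself. The paper (Lemma~\ref{lem: Xeig_continuity}) proceeds by spectral perturbation: Davis--Kahan applied to the eigenspaces of $X$ grouped by distinct eigenvalues (a grouping that \eqref{eq:ec} guarantees is inherited from $F$), Weyl for the eigenvalues, and then $G:=V\bar\Lambda V^\top$ with $\bar\Lambda=\mathrm{sign}(\Sigma)\Lambda^{1/2}$; no fixed-point argument is needed. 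Your route --- solving $\Sigma B+B\Sigma+B^2=\bar W$ by inverting the Sylvester operator where $\sigma_i+\sigma_j\neq 0$ and taking a PSD square root on the null block --- is a legitimate alternative, but as sketched it is internally inconsistent: setting $B_{12}=0$ forces the $(1,2)$-block equation $\sigma_i B_{ij}+(B^2)_{ij}=\bar W_{ij}$ to collapse to $0=\bar W_{ij}$, which fails whenever $U_X^\top X' V_X\neq 0$ (take $X=\diag(1,0)$ and $X'=(1,\delta)^\top(1,\delta)$). You must carry a nonzero $B_{12}\approx\Sigma^{-1}\bar W_{12}$, and then the null-block equation becomes $B_{22}^2=\bar W_{22}-B_{12}^\top B_{12}$, whose positive semidefiniteness is exactly where $X'\succeq 0$ must enter (via the Schur complement of $U^\top X'U$); this coupling is the part your sketch defers and the genuinely hard step of your approach. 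Relatedly, the estimate $\|B\|_F=O(\fronorm{\bar W})$ is wrong on the null block: the square root only yields $O(\fronorm{\bar W}^{1/2})$, matching the exponent in \eqref{eq: barLambda}.

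One point in your favor: your observation that the \emph{strict} forward claim requires $F$ to be an isolated symmetric square root of $X$ --- equivalently, that $F$ satisfy \eqref{eq:ec} --- is correct. A symmetric $F$ with eigenvalues $\pm\sqrt{\lambda}$ on a single eigenspace of $X$ sits on a continuous family $G_\theta$ of symmetric square roots of $X$ with unchanged eigenvalue multiset, so $g$ is constant along it and $F$ cannot be strict even when $X$ is; the paper's Weyl-based exclusion of ``sign flips'' does not rule this out. Flagging \eqref{eq:ec} as an implicit hypothesis there is the right call.
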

\begin{proof}
Define the semidefinite cone $\symMat^\dm_+ := \{ X' \in \symMat^\dm \, : \, X' \succeq 0 \}$ and the ball $B(X,\epsilon)$ for $X \in \symMat^d$ and  $\epsilon>0$ as follows:
\begin{equation} \label{eq:sb2}
    B(X,\epsilon) := \{ X' \in \symMat^d \, : \, \|X'-X\|_F \le \epsilon \}.
\end{equation}

Suppose first that $X$ is a (strict) local minimizer of \eqref{eq:bc}.  
Then there is $\epsilon>0$ such that for any $X'\in B(X,\epsilon) \cap \symMat^\dm_+$, 
we have $h(X')\geq h(X)$ (or $h(X')>h(X)$ when $X' \neq X$, in the case of strict local minimizer).  
Now suppose that a matrix $F\in \symMat^\dm$ satisfies $F^2 = X$. 
Given any $\Delta \in \symMat^d$, we have $(F+\Delta)^2 = FF + \Delta F + F\Delta  + \Delta \Delta$. 
For all $\Delta$ with $\fronorm{\Delta}\leq \min \left\{ \frac{\epsilon}{3\fronorm{F}}, \sqrt{\frac{\epsilon}{3}} \right\}$, we have that  $(F+\Delta)^2 \in B(X,\epsilon) \cap \symMat^\dm_+$. Thus, $g(F+\Delta) = h((F+\Delta)^2)\geq h(F^2)=g(F)$, showing that $F$ is a local minimizer of \eqref{eq:bc.ssv-sym}.

For the case of the {\em strict} local minimizer, we need to consider the possibility that $(F+\Delta)^2=F^2$ for some $\Delta \neq 0$. By considering the eigenvalue decomposition $F = U \Sigma U^\top = \sum_{i=1}^d \sigma_i u_i u_i^T$ (where $\sigma_i$ are the eigenvalues of $F$ and $u_i \in \RR^\dm$ are the eigenvectors), we have that $F^2 =\sum_{i=1}^d \sigma_i^2 u_i u_i^T$. 
Thus we can have $\Delta \neq 0$ with $(F+\Delta)^2 = F^2$ only if there is at least nonzero eigenvalue of $(F+\Delta)$ that is a ``sign flipped" nonzero eigenvalue of $F$.
But (by Weyl's inequality) we can exclude this possibility by choosing $\epsilon'>0$ sufficiently small and requiring $\fronorm{\Delta} \le  \min \left\{ \frac{\epsilon}{3\fronorm{F}}, \sqrt{\frac{\epsilon}{3}}, \epsilon'  \right\}$.

Next, suppose that $F$ is a local minimizer of \eqref{eq:bc.ssv-sym} and satisfies the eigenvalue condition \eqref{eq:ec}. 
Then there is $\epsilon>0$ such that the ball $B(F,\epsilon)$ defined as in \eqref{eq:sb2} is such that   $h(G^2)\geq h(F^2)$ for all $G \in B(F,\epsilon)$, with the equality being strict if $F$ is a strict local minimizer and $G \ne F$.
We define $X=F^2$, and seek $\bar\epsilon>0$ such that $X' \in B(X,\bar\epsilon) \cap \symMat^\dm_+$ has $h(X') \ge h(X)$ (with the inequality being strict when $X' \neq X$).
\textcolor{black}{As we show in Lemma \ref{lem: Xeig_continuity} below, there is a $\bar{\epsilon}>0$ and a positive constant $c$ such that for any matrix $X' \in B(X,\bar\epsilon) \cap \symMat^\dm_+$, we have an eigenvalue decomposition of $F$ as $U \Sigma U^\top$ and matrices $V$ orthogonal, $\Lambda$ nonnegative diagonal such that $X'=V \Lambda V^\top$ with $\fronorm{U-V} \le c \fronorm{X-X'} $ and $\fronorm{\Lambda -\Sigma^2} \le c\fronorm{X-X'}$. 
Moreover, from the same lemma, we have a diagonal matrix  $\bar{\Lambda}$ with $\bar{\Lambda}^2 = \Lambda$ and 
$\fronorm{\bar{\Lambda} - \Sigma}\leq c^{1/2} d^{1/4} \fronorm{X-X'}^{1/2}$.} 
{Thus, defining $G:= V \bar\Lambda V^\top$, we have  $G^2 = X'$ and $\fronorm{F-G} \le c' \max\{\fronorm{X'-X},\fronorm{X'-X}^{\frac{1}{2}}\}$ for some $c'>0$.
We can choose  $\bar\epsilon$ so that for any $X' \in B(X,\bar\epsilon) \cap \symMat^\dm_+$, we have $G \in B(F,\epsilon)$ and thus $h(X') = h(G^2) \ge h(F^2) = h(X)$.}
Thus, we have shown $X$ to be a local minimizer of \eqref{eq:bc}. 

For the claim about {\em strict} local minimizer, we claim that $F$ satisfies the eigenvalue condition \eqref{eq:ec}. Under this claim, note that any $X'$ close to $X$ with $X'\neq X$ gives rise to a factor $G$ constructed as above with $G$ close to $F$ and $X'=G^2$. We have $G \neq F$ since if $G=F$, then $X'=G^2 = X$, contradicting to $X\not=X'$. Thus, we have $g(G)>g(F)$ and $h(X')>h(X)$, as required.

\textcolor{black}{We finally prove that the strict local minimizer $F$ must satisfy the eigenvalue condition \eqref{eq:ec}. Let an eigenvalue decomposition of $F$ be $U\Sigma U^\top$. Suppose that for some eigenvalues $\sigma_i, \sigma_j$ of $F$, we have $0< \sigma_i = -\sigma_j$. Define the matrix $\Delta = U\tilde{\Delta}U^\top$ where 
$\tilde{\Delta}$ is only nonzero in the following entries, for some $\epsilon \in (0,1)$:
\[
[\tilde{\Delta}]_{ii} = -[\tilde{\Delta}]_{jj} = -\sigma_i\epsilon, \quad \tilde{\Delta}_{ij} = \tilde{\Delta}_{ji} = \sigma_i\sqrt{2\epsilon -\epsilon^2}.
\]
Then we find $(F+\Delta)^2 = U (\Sigma+\tilde\Delta)^2 U^\top = U \Sigma^2 U^\top = F^2$ but $F+\Delta \not= F$. 
Hence, $g(F+\Delta) = g(F)$ for all $\Delta$ sufficiently small with the construction outlined here. 
Thus, $F$ cannot be a strict minimizer.
}
%
\end{proof}

\begin{lemma}\label{lem: Xeig_continuity}
\textcolor{black}{Given $F\in \symMat^\dm$ and assume it satisfies the eigenvalue condition \eqref{eq:ec}. Then there is an $\epsilon>0$ and a positive constant $c$ such that 
for $X=F^2$ and any $X' \in B(X,\epsilon) \cap \symMat^\dm_+$, there is an eigenvalue decomposition of $F = \bar{U}\Sigma \bar{U}^\top$ and an eigenvalue decomposition of  $X' = V \Lambda V^\top$ so that the following holds:
\begin{equation}\label{eq: barUVandLSsquare}
\fronorm{\bar{U}-V} \leq c\fronorm{X-X'} \quad \text{and}\quad  
\fronorm{\Lambda - \Sigma^2} \leq c\fronorm{X-X'}.
\end{equation}
Moreover, if we define $\bar{\Lambda} = \mathrm{sign}(\Sigma) \Lambda^{\frac{1}{2}}$, where $\mathrm{sign} (\Sigma)$ is the diagonal matrix whose $(i,i)$ entry is $\mathrm{sign}(\sigma_i)$ \footnote{We define $\mathrm{sign}(0)=1$.}, then we have $\bar{\Lambda} ^2 = \Lambda $ and 
\begin{equation}\label{eq: barLambda}
\fronorm{\bar{\Lambda} - \Sigma}\leq c^{1/2} d^{1/4} \fronorm{X-X'}^{1/2},
\end{equation}
where $c$ is the constant from \eqref{eq: barUVandLSsquare}.
}
\end{lemma}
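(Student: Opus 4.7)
The plan is to exploit the eigenvalue condition \eqref{eq:ec} to set up a clean correspondence between the eigenspaces of $F$ and those of $X = F^2$, then invoke matrix perturbation theory (Weyl's inequality plus Davis--Kahan plus Wielandt--Hoffman) to transfer perturbations of $X$ into perturbations of its eigendecomposition. I would begin by enumerating the distinct eigenvalues of $F$ as $\mu_1, \ldots, \mu_m$ with multiplicities $k_1, \ldots, k_m$. The condition \eqref{eq:ec} implies that for any two distinct nonzero $\mu_i \ne \mu_j$ we also have $\mu_i \ne -\mu_j$, so $\mu_i^2 \ne \mu_j^2$. Consequently, the nonzero values among $\mu_1^2, \ldots, \mu_m^2$ are pairwise distinct, the $\mu_i^2$-eigenspace of $X$ coincides with the $\mu_i$-eigenspace of $F$ for every nonzero $\mu_i$, and the $0$-eigenspace of $X$ equals the null space of $F$. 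Set $\delta := \min_{i \ne j}|\mu_i^2 - \mu_j^2| > 0$ and pick $\epsilon < \delta/3$; Weyl's inequality then guarantees that for every $X' \in B(X,\epsilon) \cap \symMat^\dm_+$ the spectrum of $X'$ splits into $m$ clusters, the $i$-th lying within $\fronorm{X - X'}$ of $\mu_i^2$ and separated from the others by a gap $\ge \delta/2$.

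Next I would fix any spectral decomposition $X' = V \Lambda V^\top$ with columns partitioned as $V = [V_1, \ldots, V_m]$ to match the clusters (so the diagonal entries of $\Lambda$ corresponding to $V_i$ are within $\fronorm{X - X'}$ of $\mu_i^2$). The Davis--Kahan $\sin\Theta$ theorem, applied cluster by cluster, yields $\fronorm{V_i V_i^\top - P_i} \le C_1 \fronorm{X - X'}/\delta$, where $P_i$ is the projector onto the $\mu_i^2$-eigenspace of $X$ (equivalently, the $\mu_i$-eigenspace of $F$). For each $i$ I would then perform an orthogonal Procrustes alignment within that eigenspace: starting from any orthonormal basis $\bar{U}_i^{(0)}$ of the $\mu_i$-eigenspace of $F$, pick an orthogonal $R_i \in O(k_i)$ minimizing $\fronorm{\bar{U}_i^{(0)} R_i - V_i}$ and set $\bar{U}_i := \bar{U}_i^{(0)} R_i$. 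A standard consequence of Davis--Kahan gives $\fronorm{\bar{U}_i - V_i} \le C_2 \fronorm{V_i V_i^\top - P_i}$. Assembling $\bar{U} = [\bar{U}_1, \ldots, \bar{U}_m]$ and $\Sigma = \diag(\mu_1 I_{k_1}, \ldots, \mu_m I_{k_m})$ produces a bona fide eigendecomposition $F = \bar{U}\Sigma \bar{U}^\top$ together with the first inequality in \eqref{eq: barUVandLSsquare}. The second inequality follows from Wielandt--Hoffman once eigenvalues are matched across clusters (which is forced by the cluster separation).

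Finally, for \eqref{eq: barLambda}, the compatible cluster ordering ensures that each diagonal entry $\bar{\Lambda}_{ii} = \mathrm{sign}(\sigma_i)\sqrt{\lambda_i}$ is paired with the $\sigma_i$ satisfying $\lambda_i \approx \sigma_i^2$, with consistent signs. Using the elementary inequality $(\sqrt{a} - \sqrt{b})^2 \le |a - b|$ for $a, b \ge 0$, applied to $a = \lambda_i$ and $b = \sigma_i^2$, I get $(\bar{\Lambda}_{ii} - \sigma_i)^2 = (\sqrt{\lambda_i} - |\sigma_i|)^2 \le |\lambda_i - \sigma_i^2|$. Summing over $i$ and applying Cauchy--Schwarz yields $\fronorm{\bar{\Lambda} - \Sigma}^2 \le \sqrt{\dm}\, \fronorm{\Lambda - \Sigma^2} \le c\sqrt{\dm}\, \fronorm{X - X'}$, which rearranges to the claimed $c^{1/2}\dm^{1/4}\fronorm{X - X'}^{1/2}$ bound. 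The main obstacle is the Procrustes alignment step: making the per-cluster rotations $R_i$ yield a single orthogonal $\bar{U}$ aligned column-by-column with $V$, and then producing a diagonal $\Lambda$ in the same order. This works precisely because the eigenspaces of $F$ are pairwise orthogonal and the corresponding cluster subspaces of $X'$, being eigenspaces of a symmetric matrix, are also pairwise orthogonal, so the alignments in different clusters do not interfere, and the final bounds compose into the stated constants $c$ and $c^{1/2}\dm^{1/4}$.
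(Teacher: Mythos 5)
Your proposal is correct and follows essentially the same route as the paper: both exploit the eigenvalue condition to identify the eigenspaces of $F$ with those of $X=F^2$, apply Davis--Kahan cluster-by-cluster (the paper invokes the Yu--Wang--Samworth version, which packages your projector bound plus Procrustes alignment into one statement), bound the eigenvalue perturbation by Weyl/Wielandt--Hoffman, and finish \eqref{eq: barLambda} with the same elementary inequality $(\sqrt{a}-\sqrt{b})^2\le|a-b|$ followed by Cauchy--Schwarz to produce the $d^{1/4}$ factor.
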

\begin{proof}
\textcolor{black}{We partition the eigenvectors of $F$ according to the distinct eigenvalues. Formally, let the eigenvalues of $F$ be $\sigma_1\geq \dots \geq \sigma_\dm$ and $K$ be the number of distinct values of $\{\sigma_1,\dots,\sigma_\dm\}$. We partition the set $\{1,\dots, d\}$ according to the distinct values in $\sigma_1,\dots,\sigma_\dm$. Precisely, let $I_1,\dots, I_K$ be a partition of $\{1,\dots,\dm\}$, i.e., $\cup_{i=1}^K I _i = \{1,\dots, \dm\}$ and $I_i \cap I_j =\emptyset$ for any $i\not=j$, such that, for any $I_l$, $l=1,\dots, K$, $\sigma_i = \sigma_j$ if $i,j\in I_l$ and 
for any $1\leq l\not=k\leq K$, 
$\sigma_i \not= \sigma_j$ for $i\in I_l$, $j\in I_k$. 
That is, any two eigenvalues with indices in the same $I_l$ are equal, while those with indices in different $I_l$ are different. 
Let $|I_l|$ be the cardinality of $I_l$. 
Denote an eigenvalue decomposition of $F = U\Sigma U^\top$. 
Then $U\Sigma^2 U^\top$ is an eigenvalue decomposition of $X$. 
Thanks to the eigenvalue condition \eqref{eq:ec}, the partition $I_1,\dots, I_K$ again forms a partition of the eigenvalues $\{\sigma_1^2,\dots,\sigma_\dm^2\}$ of $X$: Eigenvalues of $X$ with indices in the same $I_l$ are equal and those with indices in different $I_l$ are distinct.
Also denote $U_{I_l} \in \mathbb{R}^{\dm \times |I_l|}$ be the matrix whose orthonormal columns span the subspace of eigenvectors corresponding to the eigenvalues in $I_l$, for each $1\leq l\leq K$. 
Note that for each $U_{I_l}$ and any orthonormal matrix $O\in \mathbb{R}^{|I_l|\times |I_l|}$, the columns of $U_{I_l} O$ are also eigenvectors corresponding to the eigenvalues with indices in $I_l$.} 

\textcolor{black}{According to Davis-Kahan \cite[Theorem 2]{yu2015useful}, for each $l=1,\dots,K$, there is some $\epsilon_l>0$, and a positive number $c_l>0$ such that for any $X'\in B(X,\epsilon_i)$ with an eigenvalue decomposition $X' = V\Lambda V^\top$, the following inequality holds for some orthonormal $O_l \in \mathbb{R}^{|I_l|\times |I_l|}$:
\begin{equation}\label{eq: VU_l}
\fronorm{V_{I_l} - U_{I_l}O_l} \leq c_l \fronorm{X'-X}.
\end{equation}
Since the columns of $U_{I_l}O_l$ are still eigenvectors corresponds to eigenvalues in $I_l$, we may collect them together as 
$\bar{U} =[U_{I_1}O_1 \; \cdots U_{I_K}O_K]$
and we still have $F=\bar{U}\Sigma \bar{U}$ and 
$X = \bar{U}\Sigma^2 \bar{U}^\top$. 
Hence, by combining the inequalities \eqref{eq: VU_l} from $l=1$ to $l=K$, we see that there is an $\epsilon_0$ and a positive number $c_0>0$ such that for any $X'\in B(X,\epsilon_0)$ with an eigenvalue decomposition $X' = V\Lambda V^\top$, there is an eigenvalue decomposition of $X=\bar{U}\Sigma \bar{U}^\top$ and 
\begin{equation}\label{eq: UV_XprimeX}
  \fronorm{V-\bar{U}} \leq c_0 \fronorm{X-X'}.  
\end{equation}
The first inequality of \eqref{eq: barUVandLSsquare} now follows from \eqref{eq: UV_XprimeX}, and the second follows from Weyl's inequality by a proper choice of $c$.
}

{For \eqref{eq: barLambda}, we prove that $\fronorm{\bar\Lambda - \Sigma}^2 \le \sqrt{d} \fronorm{\Lambda - \Sigma^2}$ by the following argument.
The result then follows by taking square roots and using the second bound in \eqref{eq: barUVandLSsquare}.
We have
\begin{align*}
\fronorm{\bar\Lambda - \Sigma}^2 &= \sum_{i=1}^d (\bar\Lambda_{ii} - \Sigma_{ii})^2 \\
& = \sum_{i=1}^d \left| |\bar\Lambda_{ii}| - |\Sigma_{ii}| \right|^2 \quad 
\text{since $\bar\Lambda_{ii}$ and $\Sigma_{ii}$  have the same sign} \\
& \le \sum_{i=1}^d \left| |\bar\Lambda_{ii}| - |\Sigma_{ii}| \right| \, (|\bar\Lambda_{ii}| + | \Sigma_{ii}|) \\
& = \sum_{i=1}^d \left| \bar\Lambda_{ii}^2 - \Sigma_{ii}^2 \right|  = \sum_{i=1}^d  \left| \Lambda_{ii} - \Sigma_{ii}^2 \right|  \\
& \le \sqrt{d} \left\{ \sum_{i=1}^d \left| \Lambda_{ii} - \Sigma_{ii}^2 \right|^2 \right\}^{1/2}
= \sqrt{d} \fronorm{\Lambda - \Sigma^2},
\end{align*}
where Cauchy-Schwarz is used for the final inequality.
}
\end{proof}

Next, we illustrate that the eigenvalue condition is actually {\em necessary} for the equivalence of local minimizers. 
That is, there is a function $h:\symMat^\dm\rightarrow \mathbb{R}$ in \eqref{eq:bc} and a matrix $X \in \symMat^\dm_+$ such that $X$ is not a local minimizer of \eqref{eq:bc}, but there is a symmetric $F$ s.t. $X=F^2$ and $F$ is a local minimizer of $g$ in \eqref{eq:bc.ssv-sym}. This example also shows that there are $f$, $C$, $x$, and $F$ of \eqref{eq:f} and \eqref{eq:ssv-sym} such that $x$ is not a local minimizer of \eqref{eq:f} but $(x,F)$ is a local minimizer of \eqref{eq:ssv-sym} under standard reformulation between $\symMat^{n}$ and $\RR^{\frac{n(n+1)}{2}}$.

\begin{exm}
    \label{exm: bc.ssv-sym-local}
Let $d=2$, and define 
\begin{equation} \label{eq:us0}
h(X) := \inprod{A}{X\odot X}+\inprod{B}{X},
\end{equation}
where $\odot$ denotes the Hadamard product and $X \in \symMat^2$.
Note that for any $E \in \symMat^2$, we have
\begin{equation} \label{eq:us1}
\begin{split}
    & h(X+E) - h(X) \\
    & = \inprod{A}{(X+E)\odot (X+E)} - \inprod{A}{X\odot X}  + \inprod{B}{E}  \\
    &=  2 \inprod{A}{X \odot E} + \inprod{A}{E \odot E} + \inprod{B}{E}  \\
    &= \inprod{2A \odot X + B}{E} + \inprod{A}{E \odot E} \\
    &= \inprod{\nabla h(X)}{E} + \inprod{A}{E \odot E}.
\end{split}
\end{equation}
We will show that for certain symmetric matrices $A$ and $B$, $X =I$ satisfies \eqref{eq: bc-foc} but not  \eqref{eq:bc-soc} and is not a local minimizer.  However, there is a symmetric matrix 
\begin{equation} \label{eq:us-1}
F = \begin{bmatrix}
    0 & 1 \\
    1 & 0
\end{bmatrix} \quad\quad \mbox{(note that $F^2 = I$)} 
\end{equation}
that satisfies \eqref{eq: bc.ssv-sym-foc} and \eqref{eq:bc.ssv-sym-soc} of
\eqref{eq:bc.ssv-sym} for $g(F)= h(F^2)$, and is also a local minimizer of \eqref{eq:bc.ssv-sym}.  
These observations demonstrate that a matrix satisfying 2N conditions for  \eqref{eq:bc.ssv-sym} does not necessarily yield a matrix satisfying 2N conditions for  \eqref{eq:bc}. 
Likewise, a local minimizer of \eqref{eq:bc.ssv-sym} may not yield a matrix satisfying 2N conditions for  \eqref{eq:bc} or be a local minimizer of \eqref{eq:bc}. 
%
%
%
%

For $A\in \symMat^2$ in \eqref{eq:us0}, denote $A = \begin{bmatrix}
    a_1 & a_3 \\ 
    a_3 & a_2
\end{bmatrix}$. 
Since $I$ is interior to the cone $\symMat^2_+$, we see that first-order conditions \eqref{eq: bc-foc} are satisfied by $X=I$ if the first-order term in the expansion \eqref{eq:us1} is zero for all $E \in \symMat^2$, that is,
\begin{equation} \label{eq:us2}
0 = \nabla h(I) = 2 A \odot I + B = 2
\begin{bmatrix}
a_1 & 0 \\
0 & a_2
\end{bmatrix} + B.
\end{equation}
The second-order conditions  \eqref{eq:bc-soc}  are {\em not} satisfied at $X=I$ if the second-order term in \eqref{eq:us1} may be negative for some $E \in \symMat^2$, which is equivalent to at least one of $a_1$, $a_2$, or $a_3$ being negative.

Similarly $I$ is {\em not} a local minimizer of \eqref{eq:bc} if \eqref{eq:us2} holds but at least one of $a_1$, $a_2$, or $a_3$ is negative, because if any of $a_1$, $a_2$, or $a_3$ is negative, we can find $E \in \symMat^2$ arbitrarily small such that $h(I+E)<h(I)$ while $I+E$ is feasible for \eqref{eq:bc} provided that $\fronorm{E}\leq 1$.

Consider the following specific choices of $A$ and $B$:
\begin{equation}\label{eq: ap_bc_ab_choice}
A=\begin{bmatrix}
    10 & 5 \\ 
    5 & -1
\end{bmatrix}\quad \text{and}\quad B = \begin{bmatrix}
    -20 & 0 \\
     0 & 2
\end{bmatrix},
\end{equation}
that is, $a_1 = 10$, $a_2=-1$, and $a_3 =5$.
For this choice, it is easy to see that first-order conditions \eqref{eq:us2} are satisfied, but the second-order conditions \eqref{eq:bc-soc}  are not satisfied, nor is $X=I$ a local minimizer: Consider $E = \begin{bmatrix}
    0 & 0 \\
    0 & t
\end{bmatrix}$ for $t\in [0,1]$.
We now proceed to show that {\em $F$ defined by \eqref{eq:us-1} satisfies 2NC and is a local minimizer for \eqref{eq:bc.ssv-sym}.}



From \eqref{eq:us-1}, we have $F^2=I$, so that $g(F) = h(I)$.
The first order condition \eqref{eq: bc.ssv-sym-foc} is $\nabla h(I) \circ F=0$, which indeed holds  because $\nabla h(I) =0$ from \eqref{eq:us2}. 
The second-order condition \eqref{eq:bc.ssv-sym-soc} becomes
\begin{equation}\label{eq: example-soc-strict-local}
D^2 h_X[F\circ \Delta,F\circ \Delta] = 2 \inprod{A}{(F\circ \Delta )\odot(F\circ \Delta )} \geq 0,\quad \text{for all}\quad \Delta \in \symMat^2.
\end{equation}
We introduce notation
\begin{equation} \label{eq:us7}
\Delta = \begin{bmatrix}
    y_1 & y_3 \\
    y_3 & y_2
\end{bmatrix} \quad \implies \quad
F\circ \Delta  = \begin{bmatrix}
    y_3 & \frac{y_1+y_2}{2} \\ 
    \frac{y_1+y_2}{2} & y_3
\end{bmatrix}.
\end{equation}
For $A$ defined in \eqref{eq: ap_bc_ab_choice}, condition \eqref{eq: example-soc-strict-local} is
\begin{equation} \label{eq:us5}
    9 y_3^2 + \frac{5}{2} (y_1+y_2)^2 \ge 0, \quad \mbox{for any $y_1$, $y_2$, $y_3$},
\end{equation}
which is obviously true.
In summary, with $h$ defined by \eqref{eq:us0} and \eqref{eq: ap_bc_ab_choice}, $X=I$ is a 1P but not a 2NP for \eqref{eq:bc}, while for the direct-substitution reformulation \eqref{eq:bc.ssv-sym} with $g(F)=h(F^2)$,  the matrix $F$ defined in \eqref{eq:us-1} (with $F^2=I$) is a 2NP.

Next, we show that $F$ is a {\em local minimizer} of  \eqref{eq:bc.ssv-sym} with $g(F)=h(F^2)$. 
Using \eqref{eq:us1} and  \eqref{eq:us7}, we have the following:
\begin{equation} \label{eq:us8}
\begin{split}
&    g(F+\Delta) - g(F) \\
&= h((F+\Delta)^2) - h(F^2) \\
&= h(I+(2 F \circ \Delta+\Delta^2)) - h(I) \\
&= \langle \nabla h(I) , (2 F \circ \Delta+\Delta^2) \rangle + 
\langle A, (2 F \circ \Delta+\Delta^2) \odot (2 F \circ \Delta+\Delta^2) \rangle \\
&= \left\langle \begin{bmatrix} 10 & 5 \\ 5 & -1 \end{bmatrix}, 
\begin{bmatrix} (2y_3 + y_1^2 + y_3^2)^2  & (y_1+y_2)^2 (1+y_3)^2 \\
(y_1+y_2)^2 (1+y_3)^2  & (2y_3 + y_2^2 + y_3^2)^2  \end{bmatrix} \right\rangle \\
&= 10 (2y_3 + y_1^2 + y_3^2)^2 + 10 (y_1+y_2)^2 (1+y_3)^2 - (2y_3 + y_2^2 + y_3^2)^2,
\end{split}
\end{equation}
where for the second-last equality we used $\nabla h(I)=0$.

We now show that there is $\epsilon>0$ such that for any $\Delta$ with $\fronorm{\Delta} \le \epsilon$, we have that the expression \eqref{eq:us8} is nonnegative. 
We use the notation \eqref{eq:us7} for $\Delta$, denote $z := 2y_3 + y_3^2$ and choose $\epsilon$ small enough that the following bounds hold:
\begin{equation} \label{eq:us10}
(1+y_3)^2 \ge \frac12, \quad (y_1-y_2)^2 \le \frac12.
\end{equation}
We thus have from \eqref{eq:us8} that for $\fronorm{\Delta} \le \epsilon$, 
\begin{equation} \label{eq:us11}
\begin{split}
g(F+\Delta) - g(F) &= 10 (z+ y_1^2 )^2 + 10 (y_1+y_2)^2 (1+y_3)^2 - (z+ y_2^2)^2 \\
& \ge 2 (z+ y_1^2 )^2 + (y_1+y_2)^2 - (z+ y_2^2)^2.
\end{split}
\end{equation}
We apply the identity $2a^2 - b^2 \ge -2(a-b)^2$ to this expression, with $a=z+y_1^2$, $b=z+y_2^2$, to obtain
\begin{align*}
g(F+\Delta) - g(F) & \ge -2(y_1^2-y_2^2) + (y_1+y_2)^2 \\
&= -2(y_1-y_2)^2 (y_1+y_2)^2 + (y_1+y_2)^2 \\
&= (1-2(y_1-y_2)^2)(y_1+y_2)^2 \ge 0,
\end{align*}
where the final inequality follows from \eqref{eq:us10}. Thus $g(F+\Delta) \ge g(F)$ for all $\Delta \in \symMat^2$ with $\fronorm{\Delta} \le \epsilon$, proving that $F$ is a local minimizer for \eqref{eq:bc.ssv-sym}.
\end{exm}

We now discuss the relationship between local minimizers of \eqref{eq:bc} and the nonsymmetric squared-variable parametrization \eqref{eq:bc.dss}.

\begin{theorem}\label{thm:bc-ssv-local_min}
    If $X\in \symMat^\dm$ is a local minimizer of \eqref{eq:bc}, then any $F$ with $X = FF^\top$ is a local minimizer of \eqref{eq:bc.dss}. Conversely, if $F$ is a local minimizer of \eqref{eq:bc.dss}, then $X = FF^\top$ is a local minimizer of \eqref{eq:bc}. 
    In general, the problem \eqref{eq:bc.dss} has no {\em strict} local minimizer (except possibly in the special case in which $F=0$ is a local minimizer).
\end{theorem}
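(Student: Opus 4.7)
Suppose $X$ is a local minimizer of \eqref{eq:bc} within $B(X,\epsilon) \cap \symMat^\dm_+$. For any $F$ with $FF^\top=X$ and any $\Delta \in \RR^{\dm \times \dm}$, the matrix $(F+\Delta)(F+\Delta)^\top$ is automatically PSD and lies within Frobenius distance $2\fronorm{F}\fronorm{\Delta}+\fronorm{\Delta}^2$ of $X$. For $\fronorm{\Delta}$ small enough, this perturbation stays in $B(X,\epsilon) \cap \symMat^\dm_+$, and $g(F+\Delta)=h((F+\Delta)(F+\Delta)^\top) \geq h(X) = g(F)$, so $F$ is a local minimizer of $g$.

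\textbf{Converse direction.} Now suppose $F$ is a local minimizer of \eqref{eq:bc.dss} within a Frobenius $\epsilon$-ball. Given $X' \succeq 0$ near $X := FF^\top$, the plan is to build $F'$ near $F$ with $F'(F')^\top = X'$, so that $h(X')=g(F') \geq g(F) = h(X)$. The construction uses the right polar decomposition $F = \sqrt{X}\, Q$, where $\sqrt{X}$ is the PSD square root of $X$ and $Q \in O(\dm)$ is orthogonal; such a $Q$ always exists, though it is nonunique when $F$ is rank-deficient, and any choice will do. Set $F' := \sqrt{X'}\, Q$ using the same $Q$. Orthogonality of $Q$ yields both $F'(F')^\top = X'$ and $\fronorm{F' - F} = \fronorm{\sqrt{X'}-\sqrt{X}}$. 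The right-hand side is controlled by applying Lemma~\ref{lem: Xeig_continuity} with $\sqrt{X}$ playing the role of its ``$F$''; the eigenvalue condition \eqref{eq:ec} holds automatically since the eigenvalues of $\sqrt{X}$ are nonnegative (no two nonzero ones can sum to zero). The lemma yields $\fronorm{\sqrt{X'}-\sqrt{X}} = O(\fronorm{X'-X}^{1/2})$, so by shrinking the $X'$-ball we can ensure $F' \in B(F,\epsilon)$, completing the argument.

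\textbf{Strictness.} For every $Q \in O(\dm)$, rotational invariance $g(FQ) = h(FQQ^\top F^\top) = h(FF^\top) = g(F)$ holds. When $\dm \geq 2$ and $r := \rank(F) \geq 1$, the stabilizer $\{Q \in O(\dm) : FQ = F\}$ consists of orthogonal maps acting as the identity on $\ker(F)^\perp$, a subgroup of dimension $\binom{\dm-r}{2} < \binom{\dm}{2} = \dim O(\dm)$. The orbit $\{FQ : Q \in O(\dm)\}$ is therefore a positive-dimensional smooth submanifold of $\RR^{\dm \times \dm}$ through $F$, so every Frobenius neighborhood of $F$ contains $FQ \neq F$ with $g(FQ) = g(F)$, precluding strictness. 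This argument degenerates only when $r = 0$, i.e.\ $F = 0$. The main technical obstacle is the converse direction: one needs $F'$ close to the prescribed $F$, not merely close to $\sqrt{X}$. The polar-decomposition identity $F' = \sqrt{X'}\, Q$ using the same $Q$ fixed for $F$ is exactly what transfers the H\"older-type continuity of the PSD square-root map from Lemma~\ref{lem: Xeig_continuity} into continuity of $F'$ relative to $F$.
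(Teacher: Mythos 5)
Your proof is correct, and all three parts land on the same underlying mechanics as the paper's. The forward direction and the strictness argument (exhibiting nearby points $FQ$ on the orthogonal orbit) match the paper essentially verbatim. The only real difference is in the converse: the paper writes the SVD $F = U\Sigma V^\top$, sets $\tilde F := FVU^\top = U\Sigma U^\top$, observes that $\tilde F$ is a PSD local minimizer of \eqref{eq:bc.ssv-sym} satisfying \eqref{eq:ec}, and then invokes Theorem~\ref{thm:bc-ssv-sym-local_min} as a black box. Your polar-decomposition route $F = \sqrt{X}\,Q$, $F' := \sqrt{X'}\,Q$ is the same construction in different clothing --- the paper's $\tilde F$ \emph{is} $\sqrt{X}$ and $UV^\top$ \emph{is} your polar factor $Q$ --- but you apply Lemma~\ref{lem: Xeig_continuity} directly to the PSD square root rather than routing through the symmetric-formulation theorem. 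This is slightly more self-contained and makes the key point transparent: once $F$ is reduced to $\sqrt{X}$ by an orthogonal rotation, everything hinges on H\"older-$\tfrac12$ continuity of the PSD square-root map, and the eigenvalue condition \eqref{eq:ec} is free for PSD matrices. One pedantic note shared with the paper's own proof: the strictness claim as argued needs $\dm \ge 2$ (for $\dm = 1$ the orthogonal group near the identity is trivial and, e.g., $h(x)=(x-1)^2$ gives a strict local minimizer $F=1$ of \eqref{eq:bc.dss}); your dimension count $\binom{\dm-r}{2} < \binom{\dm}{2}$ correctly flags this.
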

\begin{proof}
    Suppose first that  $X$ is a local minimizer of \eqref{eq:bc}, then there is a ball $B(X,\epsilon)$ defined as in \eqref{eq:sb2} for some $\epsilon>0$ such that for any $X'\in B(X,\epsilon) \cap \symMat^\dm_+$ we have $h(X')\geq h(X)$. 
    Let $F$ be such that $FF^\top = X$. Since $(F+\Delta)(F+\Delta)^\top  = FF^\top + \Delta F^\top + F\Delta ^\top + \Delta \Delta^\top$, we see that  for all $\Delta$ with $\fronorm{\Delta}\leq \min \left\{ \frac{\epsilon}{3\fronorm{F}}, \sqrt{\frac{\epsilon}{3}} \right\}$ we have $(F+\Delta)(F+\Delta)^\top \in B(X,\epsilon)$. 
    Therefore  $g(F+\Delta) = h((F+\Delta)(F+\Delta)^\top)\geq h(FF^\top) = g(F)$, showing that $F$ is a local minimizer. 

    Now suppose that  $F$ is a local minimizer of \eqref{eq:bc.dss}, and set $g^* := g(F) = h(FF^\top)$.
    Then there is a $\epsilon>0$ such that for any $G$ (possibly nonsymmetric) with $\fronorm{F-G} \le \epsilon$, we have $g(G) = h(GG^\top)\geq h(FF^\top) = g^*$.
    Suppose that the SVD of $F$ is $U\Sigma V^\top$, with $U$ and $V$ being orthogonal matrices in $\RR^{d \times d}$ and $\Sigma$ nonnegative diagonal. 
    Note that $\tilde{F}:= FVU^\top= U \Sigma U^\top$ is in $\symMat^d_+$ with $\tilde{F} \tilde{F}^\top = FF^\top$.  
    Given any $\tilde{G} \in \symMat^d$ with $\fronormA{\tilde{F}-\tilde{G}} \le \epsilon$, we have  that $G := \tilde{G} UV^\top$ has $GG^\top = \tilde{G} \tilde{G}^\top$ and $\fronorm{F-G} = \fronormA{\tilde{F}-\tilde{G}} \le \epsilon$, so that $g(\tilde{G}) = g(G) \ge g(F)$.
    We have shown at this point that for the $\epsilon>0$ defined as above, we have for all $\tilde{G} \in B(\tilde{F},\epsilon)$ (with $B$ defined as in \eqref{eq:sb2}; in particular $\tilde{G} \in \symMat^d$) that $g(\tilde{G}) \ge g^*$. Note our $\tilde{F}\succeq 0$ and hence satisfied the eigenvalue condition \eqref{eq:ec}. Thus $\tilde{F}$ is a local minimizer of \eqref{eq:bc.ssv-sym} and satisfies the eigenvalue condition \eqref{eq:ec}. 
    Hence, by Theorem~\ref{thm:bc-ssv-sym-local_min}, we see $X$ is a local minimizer of \eqref{eq:bc}.

To prove the claim about strict local minima of \eqref{eq:bc.dss}, we note that given any local minimizer $F$ of $g$ and any $\epsilon>0$, there is an orthogonal matrix $U \in \RR^{d \times d}$ with $\fronorm{U-I} \le \epsilon/\fronorm{F}$ such that $\fronorm{FU-F} \le \fronorm{F} \fronorm{U-I} \le \epsilon$ such that $g(FU) = h((FU)(FU)^\top) = h(FF^\top) = g(F)$. Thus $F$ cannot be a {\em strict} local minimizer.
\end{proof}

In \cite[Proposition 2.3]{burer2005local}, local minimizers of the linear SDP \eqref{eq: sdp} and the Burer-Monteiro formulation \eqref{eq: sdp.bm} (with $k=\dm$) are shown to be equivalent.

\subsection{Local minimizers of \eqref{eq:f}, \eqref{eq:ssv}, and \eqref{eq:ssv-sym}}

This section discusses relationships between local solutions in the formulations \eqref{eq:f}, \eqref{eq:ssv}, and \eqref{eq:ssv-sym}.
The results and proofs in this section parallel closely those in Appendix~\ref{sec:B1} above.

\begin{theorem}\label{thm:f-ssv-sym-local_min}
    Suppose $x \in \RR^n$ is a (strict) local solution  of \eqref{eq:f}, then any $(x,F) \in \RR^n \times \symMat^d$ with $C(x) = F^2$ is a (strict) local solution of \eqref{eq:ssv-sym}. 
    \textcolor{black}{Conversely, if $(x,F)$ is a local solution of \eqref{eq:ssv-sym} with $F$ satisfying the eigenvalue condition \eqref{eq:ec}, then $x$ is a local solution of \eqref{eq:f}. Moreover, if $(x,F)$ is a strict local solution of \eqref{eq:ssv-sym}, then $x$ is a strict local solution of \eqref{eq:f}.} 
\end{theorem}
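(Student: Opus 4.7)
The argument closely mirrors the proof of Theorem~\ref{thm:bc-ssv-sym-local_min}, adapted to incorporate the real variable $x$ and the constraint map $C$. The forward direction rests on the elementary observation that $(F')^2 \succeq 0$ automatically, so feasibility for \eqref{eq:ssv-sym} forces feasibility for \eqref{eq:f}. The reverse direction will rely on Lemma~\ref{lem: Xeig_continuity} to lift a feasible $x'$ for \eqref{eq:f} to a feasible pair $(x',G)$ for \eqref{eq:ssv-sym} lying close to $(x,F)$.

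For the forward direction, assume $x$ is a local minimizer of \eqref{eq:f}, so there is $\epsilon>0$ with $f(x') \geq f(x)$ for every $x'$ satisfying $\|x'-x\| \leq \epsilon$ and $C(x') \succeq 0$. Fix any $F \in \symMat^\dm$ with $C(x)=F^2$. Whenever $(x',F')$ is close enough to $(x,F)$ and satisfies $C(x')=(F')^2$, we have $C(x') \succeq 0$ automatically and $\|x'-x\|\leq \epsilon$, so $f(x') \geq f(x)$; hence $(x,F)$ is a local minimizer of \eqref{eq:ssv-sym}. In the strict case, if $(x',F') \neq (x,F)$ is feasible and close to $(x,F)$, either $x'\neq x$ (in which case $f(x')>f(x)$ by strictness at $x$) or $x'=x$ with $F'\neq F$ and $(F')^2=F^2$; exactly as in Theorem~\ref{thm:bc-ssv-sym-local_min}, the second possibility can be ruled out for sufficiently small perturbations by applying Weyl's inequality to the eigenvalues of $F'$.

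For the reverse direction, suppose $(x,F)$ is a local minimizer of \eqref{eq:ssv-sym} with $F$ satisfying \eqref{eq:ec}, and choose $\bar\epsilon>0$ such that $f(x')\geq f(x)$ whenever $\|x'-x\| + \fronorm{F'-F} \leq \bar\epsilon$ and $C(x')=(F')^2$. Let $x'$ satisfy $C(x')\succeq 0$ and $\|x'-x\|\leq \delta$, with $\delta$ to be chosen. Continuity of $C$ gives $\fronorm{C(x')-C(x)} \leq \kappa\delta$ for some constant $\kappa>0$. Because $F$ satisfies \eqref{eq:ec}, Lemma~\ref{lem: Xeig_continuity} applied to $C(x')$ (close to $C(x)=F^2$) produces a symmetric $G$ with $G^2=C(x')$ and $\fronorm{G-F}$ bounded by a multiple of $\max\{\fronorm{C(x')-C(x)},\fronorm{C(x')-C(x)}^{1/2}\}$. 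For $\delta$ small enough, $\|x'-x\|+\fronorm{G-F}\leq \bar\epsilon$, so $(x',G)$ lies in the neighborhood of $(x,F)$ and is feasible for \eqref{eq:ssv-sym}, yielding $f(x')\geq f(x)$.

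For the strict reverse case, I would first argue as in Theorem~\ref{thm:bc-ssv-sym-local_min} that any strict local minimizer $F$ of \eqref{eq:ssv-sym} must satisfy \eqref{eq:ec}: were it violated with $\sigma_i=-\sigma_j\neq 0$, a small rotation in the $(i,j)$ eigen-subspace produces a nonzero $\Delta\in\symMat^\dm$ arbitrarily small with $(F+\Delta)^2=F^2$, so $(x,F+\Delta)$ would be feasible with the same objective, contradicting strictness. With \eqref{eq:ec} in hand, the non-strict argument above goes through, and whenever $x'\neq x$ the constructed $(x',G)$ differs from $(x,F)$, so strict local minimality of \eqref{eq:ssv-sym} yields $f(x')>f(x)$. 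I expect the main obstacle to be the careful bookkeeping in the reverse direction: matching the $O(\delta)$ perturbation of $C(x')-C(x)$ under $\|x'-x\|\le \delta$ against the $O(\delta^{1/2})$ bound on $\fronorm{G-F}$ delivered by Lemma~\ref{lem: Xeig_continuity}, and ensuring the two neighborhood sizes are aligned so that $(x',G)$ genuinely lies in the $\bar\epsilon$-neighborhood guaranteeing $f(x')\geq f(x)$.
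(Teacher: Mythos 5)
Your proposal is correct and follows essentially the same route as the paper's own proof: the forward direction via automatic feasibility of $C(x')=(F')^2\succeq 0$ plus a Weyl's-inequality argument for strictness, and the reverse direction via Lemma~\ref{lem: Xeig_continuity} to lift $x'$ to a nearby feasible pair $(x',G)$, with the strict case handled by first showing a strict minimizer must satisfy \eqref{eq:ec}. The bookkeeping issue you flag (the $O(\delta)$ versus $O(\delta^{1/2})$ bounds) is resolved in the paper exactly as you anticipate, by absorbing both rates into a continuous modulus vanishing at zero.
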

\begin{proof}
Suppose first that $x$ is a (strict) local solution of \eqref{eq:f}.
Then there is $\epsilon>0$ such that for all $x' \in \RR^n$ with $\|x-x'\| \le \epsilon$ with $C(x') \succeq 0$, we have $f(x') \ge f(x)$ (with strict inequality in the strict case). 
Defining $F\in \symMat^\dm$ such that  $F^2 = C(x)$, we show that there is $\bar\epsilon>0$ such that for all $(x',F') \in \RR^n \times \symMat^d$ with $\|x-x'\| \le \bar\epsilon$ and $\fronorm{F-F'} \le \bar\epsilon$ with $C(x') = (F')^2$, we have $f(x') \ge f(x)$.
But this claim follows immediately by setting $\bar\epsilon = \epsilon$, since $C(x') = (F')^2 \succeq 0$, we have that $x'$ is feasible for \eqref{eq:f}. Thus $(x',F')$ is a local solution of \eqref{eq:ssv-sym}.

For the strict case, we have that $f(x') > f(x)$ if $x' \neq x$. we need to deal with the case of $x=x'$ but possibly $F' \neq F$. 
Since $(F')^2 = F^2$, we can argue as in the proof of Theorem~\ref{thm:bc-ssv-sym-local_min} that $F$ and $F'$ have common eigenvalues, the only different between them being in the signs of their nonzero eigenvalues. 
As before, we can use Weyl's inequality to reduce $\bar\epsilon$ as needed to ensure that in fact the only matrix $F' \in \symMat^d$ with $(F')^2 = F^2$ and $\fronorm{F-F'} \le \bar\epsilon$ is $F'=F$. 
This completes the proof that $(x,F)$ is a strict local solution of \eqref{eq:ssv-sym}.

Suppose now that $(x,F)$ is a local solution of \eqref{eq:ssv-sym} and $F$ satisfies the eigenvalue condition \eqref{eq:ec}.
Then there exists  $\epsilon>0$ such that for all $(x',G) \in \RR^n \times \symMat^d$ with $\| x-x'\| \le \epsilon$, $\fronorm{F-G} \le \epsilon$, and $C(x') = G^2$, we have $f(x') \ge f(x)$.
We seek a value $\bar\epsilon>0$ such that for all $x' \in \RR^n$ with $\| x-x' \| \le \bar\epsilon$ and $C(x') \succeq 0$, we have $f(x') \ge f(x)$, with (in the strict case) strict inequality for $x \neq x'$.
Defining the eigenvalue decomposition of $F$ to be $F = U \Sigma U^\top$, we have $C(x) = U \Sigma^2 U^\top$. Using Lemma \ref{lem: Xeig_continuity}, 
for any $x' \in \RR^n$ with $\| x-x' \| \le \bar\epsilon$ and $C(x') \succeq 0$, with a possibly redefinition of the eigenvalue decomposition of $F$, we have using smoothness of $C(\cdot)$ that $C(x') = V \Lambda V^\top$ with $\fronorm{U-V} \le c( \| x-x' \|)$ and $\fronorm{\Lambda - \Sigma^2}\le c(\|x-x'\|)$ for an orthogonal matrix $V$ and nonnegative diagonal matrix $\Lambda$, where the function $c:\RR \to \RR_+$ is continuous with $c(0)=0$ and independent of $x'$. 
Defining $\bar\Lambda = \mathrm{sign}(\Sigma) \Lambda^{1/2}$, with $\mathrm{sign}(\Sigma)$ being the diagonal matrix whose $(i,i)$ entry is $\mathrm{sign} (\sigma_i)$, we have that $\bar\Lambda^2 = \Lambda$ and $\fronorm{\bar\Lambda - \Sigma} \le c(\|x-x'\|)$, with a possible redefinition of $c$ (but retaining its salient properties) using Lemma \ref{lem: Xeig_continuity}.  
Defining $G:= V \bar\Lambda V^\top$, we have  $G^2 = C(x')$ and $\fronorm{F-G} \le c(\|x-x'\|)$, with another possible redefinition of $c$.
Thus we can choose $\bar\epsilon$ sufficiently small that for all $x' \in \RR^n$ with $\| x-x' \| \le \bar\epsilon$ and $C(x') \succeq 0$, we have that $(x',G) \in \RR^n \times \symMat^d$ with $\|x-x'\| \le \epsilon$, $\fronorm{F-G} \le \epsilon$, and  $C(x') = G^2$, and hence $f(x') \ge f(x)$, as required.

For the claim about {\em strict} local minimizer, note that any $x'$ close to $x$ with $x'\neq x$ and $C(x') \succeq 0$ gives rise to a factor $G$ constructed as above with $G$ close to $F$ and $C(x') = G^2$ so long as $F$ satisfies the eigenvalue condition \eqref{eq:ec}. But the condition follows from the last part of the proof of Theorem \ref{thm:bc-ssv-local_min}. 
Using $(x,F)$ being a strict local minimizer, we see $(x,F)\not=(x',G)$ implies that $f(x') > f(x)$. Our proof is complete.
\end{proof}

\begin{theorem}\label{thm:f-ssv-local_min}
    Suppose $x \in \RR^n$ is a local solution of \eqref{eq:f}, then any $(x,F) \in \RR^n \times \RR^{d \times d}$ with $C(x) = FF^\top$ is a local solution of \eqref{eq:ssv}. Conversely, if $(x,F)$ is a local solution of \eqref{eq:ssv}, then $x$ is a local solution of \eqref{eq:f}. 
    In general, the problem \eqref{eq:ssv} has no {\em strict} local solution (except possibly in the special case in which $C(x)=0$ and $F=0$).
\end{theorem}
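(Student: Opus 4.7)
The plan is to adapt the proof strategy used in Theorem~\ref{thm:bc-ssv-local_min}, treating this as the constrained analog, and then invoke Theorem~\ref{thm:f-ssv-sym-local_min} to close out the reverse direction.

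For the forward direction, suppose $x$ is a local solution of \eqref{eq:f} with neighborhood radius $\epsilon>0$, and fix any $F$ with $FF^\top=C(x)$. Given any $(x',F')$ feasible for \eqref{eq:ssv} (so $C(x')=F'(F')^\top\succeq 0$) with $\|x-x'\|\le\epsilon$, the point $x'$ is automatically feasible for \eqref{eq:f}, so $f(x')\ge f(x)$. Thus $(x,F)$ is a local solution of \eqref{eq:ssv}. This step is immediate and does not require any perturbation of $F$.

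The reverse direction is the main technical point. Suppose $(x,F)$ is a local solution of \eqref{eq:ssv}. Following the device in Theorem~\ref{thm:bc-ssv-local_min}, I will use the SVD $F=U\Sigma V^\top$ to construct a \emph{symmetric} PSD counterpart $\tilde F:=FVU^\top=U\Sigma U^\top$, which satisfies $\tilde F\tilde F^\top=FF^\top=C(x)$. I then claim that $(x,\tilde F)$ is a local solution of \eqref{eq:ssv-sym}. Indeed, given any symmetric $\tilde G$ near $\tilde F$ with $C(x')=\tilde G^2$ and $x'$ near $x$, the matrix $G:=\tilde G\, UV^\top$ satisfies $GG^\top=\tilde G^2=C(x')$ and $\|F-G\|_F=\|\tilde F-\tilde G\|_F$, so $(x',G)$ is feasible for \eqref{eq:ssv} and close to $(x,F)$; the local-solution property of $(x,F)$ then forces $f(x')\ge f(x)$. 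Since $\tilde F\succeq 0$, it automatically satisfies the eigenvalue condition~\eqref{eq:ec}, so Theorem~\ref{thm:f-ssv-sym-local_min} yields that $x$ is a local solution of \eqref{eq:f}. The hard part here is ensuring that the SVD trick really produces a symmetric factor for every nearby feasible $x'$, and this relies precisely on Lemma~\ref{lem: Xeig_continuity} (used implicitly through Theorem~\ref{thm:f-ssv-sym-local_min}) to guarantee that a nearby symmetric square root of $C(x')$ near $\tilde F$ exists.

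For the non-existence of strict local solutions, I will exploit the rotational symmetry of the product $FF^\top$. For any orthogonal $Q\in\RR^{d\times d}$, the pair $(x,FQ)$ has the same objective value and satisfies $(FQ)(FQ)^\top=FF^\top=C(x)$, so it is feasible for \eqref{eq:ssv}. By choosing $Q$ arbitrarily close to (but distinct from) $I_d$ when $F\ne 0$, we obtain infinitely many feasible points $(x,FQ)\ne(x,F)$ arbitrarily close to $(x,F)$ with identical objective value, which rules out strict local optimality. The only exception is the degenerate case $F=0$ (equivalently $C(x)=0$), where the orbit $\{FQ:Q\text{ orthogonal}\}$ collapses to the single point $F=0$.
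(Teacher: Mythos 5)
Your proposal is correct and follows essentially the same route as the paper's proof: the forward direction is immediate from feasibility, the converse passes to the symmetric PSD factor $\tilde F = FVU^\top$ via the SVD, verifies that $(x,\tilde F)$ is a local solution of \eqref{eq:ssv-sym}, and invokes Theorem~\ref{thm:f-ssv-sym-local_min} using that $\tilde F\succeq 0$ automatically satisfies \eqref{eq:ec}. The rotational-orbit argument for the absence of strict local solutions is also exactly the paper's.
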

\begin{proof}
    Suppose first that $x$ is a local solution of \eqref{eq:f}.
    Then there is $\epsilon>0$ such that any $x' \in \RR^n$ with $\|x-x'\| \le \epsilon$ and $C(x') \succeq 0$ has  $f(x')\geq f(x)$. 
    Defining any $F$ such that $C(x) = FF^\top$, we show that $(x,F)$ is a local solution of \eqref{eq:ssv} by finding $\bar\epsilon >0$ such that for all $(x',G)$ with $\|x-x'\| \le \bar\epsilon$ and $\fronorm{F-G} \le \bar\epsilon$, we have $f(x') \ge f(x)$.
    The claim follows immediately when we set $\bar\epsilon = \epsilon$

Suppose next that $(x,F)$ is a local solution of \eqref{eq:ssv}, and set $f^* := f(x)$.
Then there is $\epsilon>0$ such that for all $(x',G)$ with $\|x-x'\| \le \epsilon$ and $\fronorm{F-G} \le \epsilon$, we have $f(x') \ge f^* = f(x)$.
Suppose that the SVD of $F$ is $U \Sigma V^\top$, with $U$ and $V$ being orthogonal matrices in $\RR^{d \times d}$ and $\Sigma$ nonnegative diagonal.
Then $\tilde{F} := F V U^\top = U \Sigma U^T$ is in $\symMat^d_+$ with $\tilde{F} \tilde{F}^\top = FF^\top$.
Given any $(x',\tilde{G}) \in \RR^n \times \symMat^d$ with $\|x-x'\|\le\epsilon$, $\fronormA{\tilde{F}-\tilde{G}} \le \epsilon$, and $C(x') = \tilde{G} \tilde{G}^\top \succeq 0$,  we define $G := \tilde{G} UV^\top$. 
We thus have $C(x') = \tilde{G} \tilde{G}^\top = GG^\top$ and $\fronorm{F-G} = \fronormA{\tilde{F}-\tilde{G}} \le \epsilon$. 
Thus by the assumption that $(x,F)$ is a local solution of \eqref{eq:ssv} and for $\epsilon$ defined as above, we have $f(x') \ge f^*$.
We have shown at this point that for $\epsilon$ defined as above, we have for $(x',\tilde{G}) \in \RR^n \times \symMat^d$ with $\|x-x'\| \le \epsilon$, $\fronormA{\tilde{F}-\tilde{G}} \le \epsilon$, and $C(x') = \tilde{G}\tilde{G}^\top$ we have $f(x') \ge f^*$ and $\tilde{F}$ satisfies the eigenvalue condition \eqref{eq:ec}.
Hence, by Theorem~\ref{thm:f-ssv-sym-local_min}, we see that $x$ is a local minimizer of \eqref{eq:f}.

The claim about (lack of) strict local solutions of \eqref{eq:ssv} follows as in the last paragraph of Theorem~\ref{thm:bc-ssv-local_min}.
\end{proof}


\section{Stronger second-order necessary conditions} \label{sec: SOC discussion}
\label{app:C}

We discuss here a stronger form of 2NC for  \eqref{eq:f}, termed s2NC, considered in \cite{shapiro1997first} and \cite{lourencco2018optimality}. 
We only consider the general case \eqref{eq:f} in this section and show that it is the same as the 2NC under a strict complementarity condition. 

\begin{definition}[Stronger second order necessary conditions of \eqref{eq:f}]\label{def:f.s2NC}
We say that a point $(x,\Lambda) \in \real^n \times \symMat^\dm $ satisfies {\em stronger second-order necessary} conditions (s2NC) to be a solution of  \eqref{eq:f} if it satisfies first-order conditions \eqref{eq: f-foc} and, defining $r := \rank(C(x))$ and  $V \in \R^{\dm \times (d-r)}$ to be the matrix with  orthonormal columns whose column space is the null space of $C(x)$, we also have
\begin{equation}\label{eq: f-ssoc}\tag{NSDP-s2NC}
D^2 \cL(x)[z,z] + 2 \tr \left( DC_x[z] C^\dagger (x) DC_x[z] \Lambda \right)\geq 0,
\end{equation}
for all $z\in \RR^n$ such that $V^\top  D C_x[z]V\succeq 0$ and $\tr(DC_x[z] \Lambda) =0$.
Here $C(x)^\dagger$ is the pseudo-inverse of the matrix $C(x)$.
\end{definition}

Comparing Definitions~\ref{def:f} and \ref{def:f.s2NC}, we see that the only difference is in the range of $z$. 
In Definition~\ref{def:f} we need  $z$ to satisfy $V^\top  D C_x[z]V = 0$, thus requiring the inequality \eqref{eq: f-soc} to hold over a \emph{subspace} of $\real^n$. 
In Definition~\ref{def:f.s2NC}, we require $z$ to satisfy $V^\top  D C_x[z]V\succeq 0$ and $\tr(DC_x[z] \Lambda) =0$, thus requiring \eqref{eq: f-ssoc} to hold over a \emph{cone}. 
It may not be immediately clear why the cone of Definition~\ref{def:f.s2NC} contains the subspace of Definition~\ref{def:f}, but we argue next that such is indeed the case. 

Note that from the first order condition \eqref{eq: f-foc}, we know $\Lambda C(x) =C(x) \Lambda = 0$ and $\Lambda \succeq 0$. 
Hence,  the range of $\Lambda$ is contained in the null space of $C(x)$.
WLOG, we can choose the matrix $V$ specified in Definitions~\ref{def:f} and \ref{def:f.s2NC} to have the structure $V = [ V_1 \;V_2]$ where $V_1$ spans the range space of $\Lambda$ and in fact that $V_1^\top \Lambda V_1 \succ 0$ (positive definite). 
Suppose $V_1 \in \mathbb{R}^{d\times {s}}$ and $V_2 \in \mathbb{R}^{d\times (d-r-s)}$. 
Let $U\in \mathbb{R}^{d\times r}$ be a matrix with orthonormal columns that span the orthogonal complement of the null space of $C(x)$. 
Then $H = [V \; U ] = [V_1\;V_2\; U ]$ is an orthogonal $d \times d$ matrix. 
We thus have
\[
H^\top DC_x[z] H = 
\begin{bmatrix}
V^\top DC_x[z] V & V^\top DC_x[z] U \\ 
U^\top DC_x[z] V & U^\top DC_x[z] U
\end{bmatrix}, \quad H^\top \Lambda H = 
\begin{bmatrix}
    V^\top \Lambda V & 0 \\
    0 & 0
\end{bmatrix}.
\]
Thus, under \eqref{eq: f-foc}, the set of conditions $V^\top  D C_x[z]V\succeq 0$ and 
\[
0=\tr(DC_x[z] \Lambda) = 
\tr( (H^\top DC_x[z]H) (H^\top \Lambda H))=\tr( (V^\top DC_x[z]V) (V^\top \Lambda V))
\]
is equivalent to 
\begin{equation}\label{eq: range_of_z_V}
V^\top  D C_x[z]V\succeq 0
\quad \text{and} \quad (V^\top DC_x[z]V) (V^\top \Lambda V)=0.
\end{equation} 
Breaking down the upper left blocks of the matrices above further, we have 
\begin{equation}\label{eq: z_range_V}
V^\top DC_x[z] V = 
\begin{bmatrix}
V_1^\top DC_x[z] V_1 & V_1^\top DC_x[z] V_2 \\ V_2^\top DC_x[z] V_1 & V_2^\top DC_x[z] V_2
\end{bmatrix}, \quad V^\top \Lambda V = 
\begin{bmatrix}
    V_1^\top \Lambda V_1  & 0 \\
    0 & 0
\end{bmatrix},
\end{equation}
where $V_1^\top \Lambda V_1$ is symmetric positive definite, as noted above.
From this decomposition, the condition
$(V^\top DC_x[z]V) (V^\top \Lambda V)=0$ is equivalent to  
\[
0= 
\begin{bmatrix}
    V_1^\top D C_x[z] V_1 \\
    V_2^\top D C_x[z] V_1 
\end{bmatrix} = V^\top D C_x[z] V_1.
\]
By combining this equality with $V^\top  D C_x[z]V\succeq 0$, we see that the conditions \eqref{eq: range_of_z_V} are further equivalent to
\begin{equation}\label{eq: range_of_z}
    V^\top D C_x[z] V_1 = 0 \quad \text{and}\quad V_2 ^\top DC_x[z] V_2 \succeq 0.
\end{equation}
Recall that the vectors $z \in \real^n$ in Definition \ref{def:f} are required to satisfy  $V^\top DC_x[z]V =0$, which using 
\eqref{eq: z_range_V} is equivalent to 
\begin{equation}\label{eq: range_of_z_w_soc}
    V^\top D C_x[z] V_1 = 0, \quad \text{and}\quad V_2 ^\top DC_x[z] V_2 = 0.
\end{equation}
We see that \eqref{eq: range_of_z_w_soc} is a more stringent condition on $z$ than in \eqref{eq: range_of_z}, and in fact the cone of $z$ values described in Definition~\ref{def:f.s2NC} contains the subspace described in Definition~\ref{def:f}.

Next, we argue that s2NC reduces to 2NC under strict complementarity.
The strict complementarity condition \cite[(18)]{shapiro1997first} is that
\begin{equation}\label{eq: strict complementarity}
    \rank(\Lambda) + \rank(C(x)) = \dm.
\end{equation}
Since $\Lambda C(x) =0$, using the rank-nullity theorem, we know strict complementarity implies that  
\[
\range(\Lambda) = \nullspace(C(x)) \quad \text{and}\quad V_1 = V.
\]
Hence, we see that \eqref{eq: range_of_z} and \eqref{eq: range_of_z_w_soc} both reduce to $V^\top DC_x[z]V =0$, so the set of values of $z$ for which the relevant curvature condition needs to hold is the same in both Definition~\ref{def:f} and Definition~\ref{def:f.s2NC}

\section{Equivalence of 1Ps and global minimizers for \eqref{eq: nnm} and \eqref{eq: PSDreg}}
\label{app:D}

\begin{lemma}\label{lem: eq1Pnnm}
If $X$ is a 1P of \eqref{eq: nnm}, then \eqref{eq: PSDreg} admits a 1P $\bar{X}$  whose off-diagonal is $X$. Conversely,  if $\bar{X}$ is a 1P  of \eqref{eq: PSDreg}, then its off-diagonal submatrix $X$ from \eqref{eq: PSDreg}
is a 1P of \eqref{eq: nnm}. 

Moreover, all previous statements continue to hold if we replace ``1P" by ``global solution."
\end{lemma}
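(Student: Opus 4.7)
The plan is to leverage the SDP variational formula \eqref{eq: nucnorm_SDP}, together with the classical characterization $Y \in \partial \nucnorm{X}$ iff $\opnorm{Y} \le 1$ and $\inprod{Y}{X} = \nucnorm{X}$. Under this characterization, the 1C for \eqref{eq: nnm} reads $\opnorm{\nabla h(X)} \le \lambda$ and $\inprod{\nabla h(X)}{X} = -\lambda \nucnorm{X}$, while the 1C \eqref{eq: bc-foc} applied to \eqref{eq: PSDreg} demands $\bar X \succeq 0$, $\nabla \bar h(\bar X) \succeq 0$, and $\nabla \bar h(\bar X) \bar X = 0$, where a direct block calculation gives
\[
\nabla \bar h(\bar X) \;=\; \tfrac{1}{2}\begin{bmatrix} \lambda I_{\dm_1} & \nabla h(X) \\ \nabla h(X)^\top & \lambda I_{\dm_2}\end{bmatrix}.
\]

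For the forward 1P direction, I would start from a 1P $X$ of \eqref{eq: nnm} with reduced SVD $X = U_1 \Sigma V_1^\top$ and form the \emph{canonical lift} $W_1 := U_1 \Sigma U_1^\top$, $W_2 := V_1 \Sigma V_1^\top$, so that $\bar X$ factors as the rank-$r$ PSD matrix $\bigl[\begin{smallmatrix}U_1\\ V_1\end{smallmatrix}\bigr] \Sigma \bigl[\begin{smallmatrix}U_1\\ V_1\end{smallmatrix}\bigr]^\top$. By Schur complement, $\nabla \bar h(\bar X) \succeq 0$ is equivalent to $\opnorm{\nabla h(X)} \le \lambda$, which comes from the 1C for \eqref{eq: nnm}. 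Decomposing $-\nabla h(X)/\lambda = U_1 V_1^\top + W$ with $U_1^\top W = 0$, $W V_1 = 0$ yields the two key identities $\nabla h(X) V_1 = -\lambda U_1$ and $U_1^\top \nabla h(X) = -\lambda V_1^\top$, and substituting these into each of the four blocks of $\nabla \bar h(\bar X) \bar X$ makes every block vanish.

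The main obstacle is the reverse 1P direction: given a 1P $\bar X$ of \eqref{eq: PSDreg} with off-diagonal $X$, the PSD condition on $\nabla \bar h(\bar X)$ already supplies $\opnorm{\nabla h(X)} \le \lambda$, so only $\inprod{\nabla h(X)}{X} = -\lambda \nucnorm{X}$ remains to be shown. I would analyze $\nullspace(\nabla \bar h(\bar X))$ directly: a vector $(u,v)$ lies in it iff $u = -\nabla h(X) v / \lambda$ and $\nabla h(X)^\top \nabla h(X) v = \lambda^2 v$, so $v$ lives in the right singular subspace of $\nabla h(X)$ at singular value exactly $\lambda$ and $u$ is its paired left singular vector (up to sign). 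Collecting these into matrices $P_\lambda, Q_\lambda$ with orthonormal columns, the inclusion $\range(\bar X) \subseteq \nullspace(\nabla \bar h(\bar X))$ combined with $\bar X \succeq 0$ forces
\[
\bar X \;=\; \begin{bmatrix} -P_\lambda \\ Q_\lambda \end{bmatrix} M \begin{bmatrix} -P_\lambda^\top & Q_\lambda^\top \end{bmatrix}
\]
for some $M \succeq 0$, so $X = -P_\lambda M Q_\lambda^\top$, $W_1 = P_\lambda M P_\lambda^\top$, $W_2 = Q_\lambda M Q_\lambda^\top$. Orthonormality of the columns of $P_\lambda$ and $Q_\lambda$, together with the eigendecomposition of $M$, yield $\nucnorm{X} = \tr(M) = \tr(W_1) = \tr(W_2)$, and the $(1,1)$-block of $\nabla \bar h(\bar X) \bar X = 0$ then gives $\inprod{\nabla h(X)}{X} = \tr(\nabla h(X) X^\top) = -\lambda \tr(W_1) = -\lambda \nucnorm{X}$, closing the argument.

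For the global-minimizer part, both directions reduce to a short sandwich argument using the SDP inequality $\tfrac{1}{2}(\tr(W_1)+\tr(W_2)) \ge \nucnorm{X}$ with equality attained at the canonical lift. If $X_\star$ minimizes \eqref{eq: nnm}, its canonical lift $\bar X_\star$ satisfies $\bar h(\bar X_\star) = h(X_\star) + \lambda \nucnorm{X_\star}$, and for any feasible $\bar X$ with off-diagonal $X$, the SDP inequality gives $\bar h(\bar X) \ge h(X) + \lambda \nucnorm{X} \ge h(X_\star) + \lambda \nucnorm{X_\star} = \bar h(\bar X_\star)$. Conversely, if $\bar X_\star$ minimizes \eqref{eq: PSDreg} with off-diagonal $X_\star$, then $\bar h(\bar X_\star) \ge h(X_\star) + \lambda \nucnorm{X_\star}$ by the SDP inequality, while for any $X \in \RR^{\dm_1 \times \dm_2}$ its canonical lift $\bar X$ is feasible with $\bar h(\bar X) = h(X) + \lambda \nucnorm{X}$; optimality of $\bar X_\star$ forces $h(X_\star) + \lambda \nucnorm{X_\star} \le h(X) + \lambda \nucnorm{X}$ for every $X$, so $X_\star$ is a global minimizer of \eqref{eq: nnm}.
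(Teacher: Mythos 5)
Your argument is correct, and two of its three pieces---the forward 1P direction via the canonical lift $\bar{X}$ of \eqref{eq: bar_X_from_X} with the Schur-complement check of $\nabla\bar{h}(\bar{X})\succeq 0$, and the sandwich argument for global minimizers via \eqref{eq: nucnorm_SDP}---coincide with the paper's proof. Where you genuinely diverge is the reverse 1P direction. The paper works from the eigendecomposition $\bar{X} = Z\bar\Sigma Z^\top$ with $Z = [\bar{U};\bar{V}]$, uses $\nabla\bar{h}(\bar{X})Z=0$ to deduce $\bar{U}^\top\bar{U}=\bar{V}^\top\bar{V}=\tfrac12 I$, reconstructs the SVD $X = (\sqrt{2}\bar{U})(\tfrac12\bar\Sigma)(\sqrt{2}\bar{V})^\top$, and then verifies Watson's alignment conditions \eqref{eq:sj7}. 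You instead verify the equivalent Fenchel-type characterization $\opnorm{\nabla h(X)}\le\lambda$ and $\inprod{\nabla h(X)}{X}=-\lambda\nucnorm{X}$, obtaining the latter by analyzing $\nullspace(\nabla\bar{h}(\bar{X}))$: the inclusion $\range(\bar{X})\subseteq\nullspace(\nabla\bar{h}(\bar{X}))$ forces $\bar{X}=NMN^\top$ with $N=[-P_\lambda;Q_\lambda]$ built from the singular pairs of $\nabla h(X)$ at singular value exactly $\lambda$, after which $\nucnorm{X}=\tr(M)=\tr(W_1)$ and the $(1,1)$ block of $\nabla\bar{h}(\bar{X})\bar{X}=0$ close the argument. (All the small claims check out: $N^\top N = 2I$ gives $M=\tfrac14 N^\top\bar{X}N\succeq 0$, the columns of $P_\lambda$ inherit orthonormality from those of $Q_\lambda$ since $\nabla h(X)^\top\nabla h(X)Q_\lambda = \lambda^2 Q_\lambda$, and $\nucnorm{-P_\lambda M Q_\lambda^\top}=\tr(M)$ via the eigendecomposition of $M$.) Your route makes the complementary-slackness structure of the dual certificate explicit---the support of $\bar{X}$ must sit where $\nabla h(X)$ attains singular value $\lambda$---and avoids the $\sqrt{2}$ normalization bookkeeping; the paper's route is more hands-on but delivers the SVD of $X$ explicitly, which it reuses to state \eqref{eq:sj7}. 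Both are complete proofs.
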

\begin{proof}
We assume without loss of generality that $\lambda =1$. 

Let $r$ be the rank of $X$ and write the SVD of $X$ as $X = U\Sigma V^\top $ where $U\in \mathbb{R}^{\dm_1\times r}$ and $V\in \mathbb{R}^{\dm_2\times r}$ are orthonormal and $\Sigma \in \mathbb{R}^{r\times r}$ is a diagonal matrix. 
Recall that a matrix $X$ is a 1P of   \eqref{eq: nnm} if 
\begin{equation}\label{eq: nnm_1P}
    -\nabla h(X) \in \partial \nucnorm{X} = \{ H \mid U^\top H = V^\top \;,\; 
     V^\top H^\top  = U^\top ,\;\text{and}\;
    \opnorm{H}\leq 1\},
\end{equation}
where $\opnorm{H}$ is the largest singular value of $H$.
(This description of subdifferential $\partial \nucnorm{X}$ can be found in 
\cite[Example 2]{watson1992characterization}.) Hence $X$ is a 1P of \eqref{eq: nnm} if 
\begin{equation} \label{eq:sj7}
    \opnorm{\nabla h(X)}\leq 1, \;\;
U^\top \nabla h(X) = -V^\top, \;\; V^\top \nabla h(X)^\top = -U^\top.
\end{equation}
From \eqref{eq: bc-foc}, the first-order necessary conditions of a matrix $\bar{X}$ for \eqref{eq: PSDreg} are
\begin{equation}\label{eq: 1Nnnm}
    \bar{X}\succeq 0, \;\;
    \nabla \bar{h}(\bar{X})\succeq 0, \;\; \nabla \bar{h}(\bar{X}) \bar{X} =0.
\end{equation}
Recall $\bar{h}(\bar{X}) = h(X) + \frac{1}{2}\tr(\bar{X})$.

We now prove the first claim of the lemma:  If $X$ is a 1P of \eqref{eq: nnm}, we construct a 1P $\bar{X}$ for \eqref{eq: PSDreg} whose off-diagonal is $X$. 
Next , we prove the reverse direction. 
Finally,  we prove the corresponding claims about global solutions.

\noindent \underline{\textbf{\eqref{eq: nnm_1P} yields \eqref{eq: 1Nnnm}.}}
Given $X = U \Sigma V^T$ satisfying \eqref{eq: nnm_1P}, we show that the following matrix $\bar{X}$ satisfies \eqref{eq: 1Nnnm}:
\begin{equation}\label{eq: bar_X_from_X} 
\bar{X} = 
\begin{bmatrix}
    U\Sigma U^\top & X \\ 
    X^\top & V\Sigma V^\top
\end{bmatrix} = 
\left( \frac{1}{\sqrt{2}} 
\begin{bmatrix}
    U \\ V
\end{bmatrix} \right) (2\Sigma) \left(\frac{1}{\sqrt{2}}\begin{bmatrix}
    U \\ V
\end{bmatrix}\right)^\top.
\end{equation}
It follows immediately from this definition that  $\bar{X}\succeq 0$. 
Note that 
\begin{equation}\label{eq: trace_bar_X_nucnorm_X}
\tr(\bar{X}) = 2\tr(\Sigma) = 2\nucnorm{X}
\end{equation}
To see $\nabla \bar{h}(\bar{X}) \succeq 0$, we use the Schur complement characterization:  
\begin{equation}\label{eq: nabla_h_bar}
\nabla \bar{h}(\bar{X}) = \frac{1}{2}
\begin{bmatrix}
   I_{\dm_1}  & \nabla h(X)\\ 
    \nabla h(X)^\top & I_{\dm_2} 
\end{bmatrix}\succeq 0
\iff I_{d_2} -\nabla h(X)^\top \nabla h(X)\succeq 0. 
\end{equation} 
But the right-hand inequality holds because  $\opnorm{\nabla h(X)}\leq 1$ from \eqref{eq:sj7}. 
It  remains to show that $\nabla \bar{h}(\bar{X}) \bar{X}=0$. 
From \eqref{eq: bar_X_from_X} and \eqref{eq: nabla_h_bar}, we have
\begin{align}
\nonumber 
    \nabla \bar{h} (\bar{X})\bar{X} & = \frac12 \begin{bmatrix}
   I_{\dm_1}  & \nabla h(X)\\ 
    \nabla h(X)^\top & I_{\dm_2} 
\end{bmatrix}
\begin{bmatrix}
    U\Sigma U^\top & X \\ 
    X^\top & V\Sigma V^\top
\end{bmatrix} \\
\label{eq:sj9}
& =
\frac12 \begin{bmatrix}
    U \Sigma U^\top + \nabla h(X) X^\top & X + \nabla h(X) V \Sigma V^\top  \\  \nabla h(X)^\top U \Sigma U^\top +X^\top &  \nabla h(X)^\top X + V \Sigma V^\top 
\end{bmatrix}.
\end{align}
For the top left block, we have from $X = U \Sigma V^\top$ and \eqref{eq:sj7} that
\[
U \Sigma U^\top + \nabla h(X) X^\top = U \Sigma U^\top +(\nabla h(X) V) \Sigma  U^\top = U \Sigma U^\top  - U \Sigma U^\top  = 0.
\]
We can show by similar arguments that the other three blocks in \eqref{eq:sj9} are also zero matrices, completing the proof.

\smallskip

\noindent \underline{\textbf{\eqref{eq: 1Nnnm} yields \eqref{eq: nnm_1P}.}}
Supposing  now that $\bar{X}$ is a 1P of \eqref{eq: PSDreg}, we show that its off-diagonal submatrix $X$  is a 1P of \eqref{eq: nnm}. 
It suffices to show that when \eqref{eq: 1Nnnm}  holds then \eqref{eq:sj7} holds, where $U$ and $V$ are the singular vectors of $X$.
Note first that the condition $\opnorm{\nabla {h}(X)}\leq 1$ is satisfied because of $\nabla {\bar{h}}(\bar{X}) \succeq 0$ and \eqref{eq: nabla_h_bar}. We are left to prove that the other two conditions hold. 

Given the symmetric matrix $\bar{X} \succeq 0$ satisfying \eqref{eq: 1Nnnm}, with rank $\bar{r}$. Note that $\bar{r}\leq \min\{d_1,d_2\}$. To see this,  note from \eqref{eq: nabla_h_bar}, we know the rank of $\nabla \bar{h}(\bar{X})$ is at least $\max\{d_1,d_2\}$ by inspecting the number of independent columns of $\nabla \bar{h}(\bar{X})$. Combining this fact with $\nabla \bar{h}(\bar{X}) \bar{X} =0$ in \eqref{eq: 1Nnnm}, we see $\bar{r}\leq \min\{d_1,d_2\}$.
We write the eigenvalue decomposition of $\bar{X}$ in terms of a matrix
$Z = 
\begin{bmatrix}
    \bar{U} \\ \bar{V} 
\end{bmatrix} \in \mathbb{R}^{(\dm_1+\dm_2)\times \bar{r}}$ with orthonormal columns as follows:  
\begin{equation}\label{eq: XWbarUbarV}
\begin{bmatrix}
         W_1 & X \\
         X^\top & W_2
     \end{bmatrix} = 
     \bar{X} = 
     Z \bar{\Sigma} Z^\top =
\begin{bmatrix}
    \bar{U}\bar{\Sigma} \bar{U}^\top & \bar{U} \bar{\Sigma} \bar{V}^\top\\
    \bar{V} \bar{\Sigma} \bar{U}^\top & 
    \bar{V} \bar{\Sigma} \bar{V}^\top
\end{bmatrix} =
\begin{bmatrix}
\bar{U} \\ \bar{V}
\end{bmatrix}
\bar{\Sigma}
\begin{bmatrix}
\bar{U} \\ \bar{V}
\end{bmatrix}^\top,
\end{equation}
where $\bar\Sigma$ is the positive diagonal matrix of eigenvalues (with the diagonals arranged in nonincreasing order).
Since $\nabla \bar{h}(\bar{X}) \bar{X} =0$, we have from \eqref{eq: XWbarUbarV} and the form of $\nabla \bar{h} (\bar{X})$ in \eqref{eq: nabla_h_bar} that 
\begin{equation}\label{eq: nabla _bar_h_bar_X_W}
0 = \nabla \bar{h}(\bar{X}) Z = \frac{1}{2}
\begin{bmatrix}
    \bar{U} + \nabla h(X) \bar{V} \\ 
    \nabla h(X)^\top \bar{U}  + \bar{V}
\end{bmatrix},
\end{equation}
so that 
\begin{equation}
    \label{eq:sj8}
    \bar{U}^\top \nabla h(X) \bar{V} = -\bar{U}^\top \bar{U} = -\bar{V}^\top \bar{V}.
\end{equation}
We show now that by setting
\[
U := \sqrt{2} \bar{U}, \;\; V := \sqrt{2} \bar{V}, \;\; \Sigma := \frac12 \bar\Sigma,
\]
we have that $X=U \Sigma V^T$ is the SVD of $X$, where in particular $U$ and $V$ are orthonormal, and also that $U$ and $V$ satisfy the second and third conditions of \eqref{eq:sj7}.
Because of \eqref{eq:sj8}, together with $I_{\bar{r}} = Z^\top Z = \bar{U}^\top \bar{U} +  \bar{V}^\top \bar{V}$, we have that $\bar{U}^\top \bar{U} =  \bar{V}^\top \bar{V} = \frac12 I_{\bar{r}}$, so $U$ and $V$ have orthonormal columns. 
Meanwhile, we have from \eqref{eq: XWbarUbarV} that  $X = (\sqrt{2}\bar{U}) \frac{\bar{\Sigma}}{2} (\sqrt{2}\bar{V})^\top = U \Sigma V^\top$, where $\Sigma$ is positive diagonal with nonincreasing diagonals. 
Finally, by multiplying \eqref{eq: nabla _bar_h_bar_X_W} by $2\sqrt{2}$, substituting the definitions $U = \sqrt{2} \bar{U}$ and $V = \sqrt{2} \bar{V}$, we obtain the second and third conditions of \eqref{eq:sj7}.


Note that \eqref{eq: trace_bar_X_nucnorm_X} holds because from \eqref{eq: XWbarUbarV} and $\Sigma = \tfrac12 \bar\Sigma$, we have $\tr (\bar{X}) = \tr (\bar\Sigma) = 2 \tr (\Sigma)$, while $\tr (\Sigma) = \| X \|_*$.

\smallskip

\noindent{\underline{\textbf{Equivalence between global solutions of \eqref{eq: nnm} and \eqref{eq: PSDreg}.}}} 
If $X$ is a global solution of \eqref{eq: nnm}, we construct $\bar{X}$ as in \eqref{eq: bar_X_from_X}. 
(Note that \eqref{eq: trace_bar_X_nucnorm_X} holds.)
Recall the definition \eqref{eq: nucnorm_SDP} of $\nucnorm{X}$.
For any $\bar{X}' = 
\begin{bmatrix}
    W_1' &X' \\ 
    X'^\top & W_2'
\end{bmatrix}\succeq 0$, feasible for \eqref{eq: PSDreg}, we have 
\begin{equation*}
    \bar{h}(\bar{X}') 
    = h(X') + \frac{1}{2}\left(\tr(W_1')+\tr(W_2')\right)
    \overset{(a)}{\geq} h(X') + \nucnorm{X'} 
   \overset{(b)}{\geq} h(X) + \nucnorm{X} 
   \overset{(c)}{=} \bar{h}(\bar{X}).
\end{equation*}
Here, step $(a)$ is due to \eqref{eq: nucnorm_SDP}; the step $(b)$ is due to the optimality of $X$ in \eqref{eq: nnm}, with $\lambda=1$; and  step $(c)$ is due to \eqref{eq: trace_bar_X_nucnorm_X}, along with the definition of $\bar{h}(\cdot)$ from \eqref{eq: PSDreg}. 
Thus, $\bar{X}$ is a global solution of \eqref{eq: PSDreg}.

For the converse, suppose that  $\bar{X} = \begin{bmatrix}
    W_1 & X \\ 
    X^\top & W_2
\end{bmatrix}$ is a global solution of \eqref{eq: PSDreg}.
Take any $X'\in \mathbb{R}^{\dm_1 \times \dm_2}$, define $W_1'$ and $W_2'$ to be the solutions of \eqref{eq: nucnorm_SDP} corresponding to $X'$, and set $\bar{X}' := 
\begin{bmatrix}
    W_1' & X' \\
    X'^\top & W_2'
\end{bmatrix} \succeq 0$.
Then we have
\[
h(X') + \nucnorm{X'}
\overset{(a)}{=} h(X') + \tfrac{1}{2} \tr(\bar{X}') 
\overset{(b)}{\geq }\bar{h}(\bar{X}) 
\overset{(c)}{\geq} h(X) + \nucnorm{X}.
\]
Step $(a)$ uses the definition of $\bar{X}'$ from \eqref{eq: nucnorm_SDP}  and step $(b)$ uses optimality of $\bar{X}$ and the definition of $\bar{h}(\cdot)$ from \eqref{eq: PSDreg}. The last step $(c)$ uses that $\bar{h}(\bar{X}) = h(X) + \frac{1}{2}(\tr(W_1)+\tr(W_2))\geq h(X) + \nucnorm{X}$ where the inequality is due to \eqref{eq: nucnorm_SDP}.
\end{proof}

\end{document}